\tikzstyle fwd=[line width=0.7pt, ->]   
\tikzstyle fwddash=[line width=0.7pt, dashed, ->]   
\tikzstyle bwd=[double, line width=0.3pt, ->]  
\tikzstyle refl=[double,  dashed, line width=0.2pt, ->]    
\tikzstyle{every node}=[font=\small] 
\tikzset{
  >=stealth', 
  invisible/.style={opacity=0}, 
  alt/.code args={<#1>#2#3}{\alt<#1>{\pgfkeysalso{#2}}{\pgfkeysalso{#3}}}, 
  visible on/.style={alt=#1{}{invisible}}, 
  smallnode/.style={circle, fill=black, thick, inner sep=1pt, minimum size=1.5pt}, 
  punkt/.style={
           rectangle,
           rounded corners,
           draw=black, very thick,
           text width=5em,
           minimum height=2em,
           text centered},
}
\tikzset{%
        brace/.style = { decorate, decoration={brace, amplitude=5pt} },
       mbrace/.style = { decorate, decoration={brace, amplitude=5pt, mirror} },
        label/.style = { black, midway, scale=0.5, align=center },
     toplabel/.style = { label, above=.5em, anchor=south },
    leftlabel/.style = { label,rotate=-90,left=.5em,anchor=north },   
  bottomlabel/.style = { label, below=.5em, anchor=north },
        force/.style = { rotate=-90,scale=0.4 },
        round/.style = { rounded corners=2mm },
       legend/.style = { right,scale=0.4 },
        nosep/.style = { inner sep=0pt },
   generation/.style = { anchor=base }
}
\newcommand{\bm}[1]{\boldsymbol{#1}}
\newcommand{\vg}{{\mathbf{g}}}
\newcommand{\vm}{{\mathbf{m}}}
\newcommand{\vv}{{\mathbf{v}}}
\newcommand{\vx}{{\mathbf{x}}}
\newcommand{\vy}{{\mathbf{y}}}
\newcommand{\vz}{{\mathbf{z}}}
\newcommand{\vG}{{\mathbf{G}}}
\newcommand{\vI}{{\mathbf{I}}}
\newcommand{\vV}{{\mathbf{V}}}
\newcommand{\cH}{{\mathcal{H}}}
\newcommand{\cI}{{\mathcal{I}}}
\newcommand{\cN}{{\mathcal{N}}}
\newcommand{\EE}{\mathbb{E}} 
\newcommand{\RR}{\mathbb{R}} 
\newcommand{\vzero}{\mathbf{0}} 
\newcommand{\vone}{{\mathbf{1}}} 
\newcommand{\Prob}{{\mathrm{Prob}}} 
\newcommand{\Diag}{{\mathrm{Diag}}} 
\newcommand{\Proj}{{\mathrm{Proj}}} 
\DeclareMathOperator*{\argmin}{arg\,min} 
\DeclareMathOperator*{\Argmin}{Arg\,min} 
\DeclareMathOperator*{\Min}{minimize}
\newcommand{\bc}{\begin{center}}
\newcommand{\ec}{\end{center}}
\newcommand{\bdm}{\begin{displaymath}}
\newcommand{\edm}{\end{displaymath}}
\newcommand{\beq}{\begin{equation}}
\newcommand{\eeq}{\end{equation}}
\newcommand{\bfl}{\begin{flushleft}}
\newcommand{\efl}{\end{flushleft}}
\newcommand{\bt}{\begin{tabbing}}
\newcommand{\et}{\end{tabbing}}
\newcommand{\beqn}{\begin{eqnarray}}
\newcommand{\eeqn}{\end{eqnarray}}
\newcommand{\beqs}{\begin{align*}} 
\newcommand{\eeqs}{\end{align*}}  
\newtheorem{assumption}{Assumption}
\numberwithin{equation}{section}
\begin{document}

\title{Parallel and distributed asynchronous adaptive stochastic gradient methods}

\author{Yangyang Xu \and Yibo Xu \and Yonggui Yan \\  Colin Sutcher-Shepard \and Leopold Grinberg \and Jie Chen}

\institute{Yangyang Xu,  Y. Yan, C. Sutcher-Shepard. Department of Mathematical Sciences, Rensselaer Polytechnic Institute\\
Yibo Xu. School of Mathematical and Statistical Sciences, Clemson University. Part of Yibo's work was done when he was a postdoctoral fellow at Rensselaer Polytechnic Institute.\\
L. Grinberg. AMD, Cambridge, Massachusettes\\
J. Chen. MIT-IBM Watson AI Lab, IBM Research\\
correspondence to Yangyang Xu at \email{xuy21@rpi.edu}}

\date{\today}

\maketitle

\begin{abstract}
Stochastic gradient methods (SGMs) are the predominant approaches to train deep learning models. The adaptive versions (e.g., Adam and AMSGrad) have been extensively used in practice, partly because they achieve faster convergence than the non-adaptive versions while incurring little overhead. On the other hand, asynchronous (async) parallel computing has exhibited significantly higher speed-up over its synchronous (sync) counterpart. Async-parallel non-adaptive SGMs have been well studied in the literature from the perspectives of both theory and practical performance. Adaptive SGMs can also be implemented without much difficulty in an async-parallel way. However, to the best of our knowledge, no theoretical result of async-parallel adaptive SGMs has been established. The difficulty for analyzing adaptive SGMs with async updates originates from the second moment term. In this paper, we propose an async-parallel adaptive SGM based on AMSGrad. We show that the proposed method inherits the convergence guarantee of AMSGrad for both convex and non-convex problems, if the staleness (also called delay) caused by asynchrony is bounded. Our convergence rate results indicate a nearly linear parallelization speed-up if $\tau=o(K^{\frac{1}{4}})$, where $\tau$ is the staleness and $K$ is the number of iterations. The proposed method is tested on both convex and non-convex machine learning problems, and the numerical results demonstrate its clear advantages over the sync counterpart and the async-parallel nonadaptive SGM.

\vspace{0.3cm}

\noindent {\bf Keywords:} stochastic gradient method, adaptive learning rate, deep learning
\vspace{0.3cm}

\noindent {\bf Mathematics Subject Classification:} 90C15, 65Y05, 68W15, 65K05

\end{abstract}

\section{Introduction}
In recent years, \emph{adaptive} stochastic gradient methods (SGMs), such as AdaGrad \cite{duchi2011adaptive}, Adam \cite{kingma2014adam}, and AMSGrad \cite{reddi2019convergence}, have become very popular due to their great success in training deep learning models. These adaptive SGMs can practically be significantly faster than a classic \emph{non-adaptive} SGM. We aim at speeding up adaptive SGMs on massively parallel computing resources. One way is to parallelize them in a \emph{synchronous} (sync) way by using a large batch size, in order to obtain high parallelization speed-up. However, it has been observed \cite{keskar2016large, masters2018revisiting} that large-batch training in deep learning can often lead to worse generalization than small-batch training. To simultaneously gain fast convergence, high parallelization speed-up, and also good generalization, we propose to develop \emph{asynchronous} (async) parallel \emph{adaptive} SGMs.

Async-parallel computing under either shared-memory or distributed setting has been demonstrated to enjoy significantly higher speed-up than its sync counterpart, e.g., \cite{recht2011hogwild, lian2015asynchronous, liu2014asynchronous-cd, peng2016arock}. 
At each iteration of a sync-parallel method, the workers that finish tasks earlier must wait for those that finish later. This can result in a lot of idle waiting time. In addition, under a shared-memory setting, all workers access the memory simultaneously, which can cause memory congestion \cite{bertsekas1991some}, and under a distributed setting, enforcing synchronization is often inefficient
due to communication latency. For these reasons, a sync-parallel method may have a very low parallelization speed-up. On the contrary, an async-parallel method does not require all workers to keep the same pace and can eliminate the waiting time and the memory congestion issue. However, it may be difficult to guarantee the convergence of an async-parallel method, 
because outdated information could be used in updating the variables.

Async-parallel methods have been developed for non-adaptive SGMs, e.g., in \cite{agarwal2011distributed, recht2011hogwild, lian2015asynchronous}. However, a \emph{parallel nonadaptive} SGM may be slower than a \emph{non-parallel adaptive} SGM to reach the same accuracy. Hence, it is important to design a method that can achieve the high speed-up of async-parallel implementation and also the fast convergence of an adaptive SGM. How to guarantee a successful integration remains an open question, although numerical experiments have been conducted to demonstrate the performance of async-parallel adaptive SGMs, e.g., in \cite{dean2012large, guan2017delay}. The non-triviality lies in the integrated analysis of the second-moment term used in adaptive SGMs. In this work, we give an affirmative answer to the question, by designing an async-parallel adaptive SGM under both shared-memory and distributed settings. 


\subsection{Proposed algorithm}
We consider the stochastic program
\begin{equation}\label{eq:stoc-prob}
F^* = \Min_{\vx\in X} ~F(\vx):= \EE_\xi \big[f(\vx;\xi)\big],
\end{equation}
where $\xi\in \Xi$ is a random variable, and $X\subseteq \RR^n$ is a closed convex set. When $\xi$ is uniformly distributed on a finite set $\Xi=\{\xi_1,\ldots, \xi_N\}$, \eqref{eq:stoc-prob} reduces to a finite-sum structured problem, which includes as examples all machine learning problems with pre-collected training data.

For solving \eqref{eq:stoc-prob}, we propose an async-parallel adaptive SGM, named APAM, which is based on AMSGrad in \cite{reddi2019convergence}. We adopt a master-worker set-up. The pseudocode is shown in Algorithm~\ref{alg:async-adp-sgm}, which is from the master's view. The updates in \eqref{eq:update-m} through \eqref{eq:update-x} are performed by the master, while the workers compute the stochastic gradients $\{\vg^{(k)}\}$. Due to the potential information delay caused by asynchrony, $\vg^{(k)}$ may not be evaluated at $\vx^{(k)}$; see more discussions in section~\ref{sec:staleness}. In \eqref{eq:update-x}, we define a weighted norm as $\|\vx\|_\vv^2 :=\vx^\top \Diag(\vv) \vx$, and if $X=\RR^n$, the update reduces to $\vx^{(k+1)}=\vx^{(k)} - \alpha_k \vm^{(k)}\oslash \sqrt{\widehat\vv^{(k)}}$, where $\oslash$ denotes component-wise division. The weight vector $\widehat\vv^{(k)}$ depends on all previous stochastic gradients, and thus the effective learning rate $\alpha_k \vone \oslash \sqrt{\widehat\vv^{(k)}}$ \emph{adaptively} depends on the gradients. 


\begin{algorithm}
\caption{\textbf{a}sync-\textbf{p}arallel \textbf{a}daptive stochastic gradient \textbf{m}ethod (APAM) from master's view}
\DontPrintSemicolon
\label{alg:async-adp-sgm}
\textbf{Initialization:} choose $\vx^{(1)}\in X$ and $\beta_1, \beta_2\in [0,1)$; set $\vm^{(0)}=\vzero$ and $\vv^{(0)}=\widehat\vv^{(0)}=\vzero$, .\;
\For{$k=1,2,\ldots$}{
Obtain a (possibly outdated) stochastic gradient $\vg^{(k)}$ from a worker, and update
\begin{align}
&\vm^{(k)}=\beta_1 \vm^{(k-1)} + (1-\beta_1)\vg^{(k)},\label{eq:update-m}\\
&\vv^{(k)}=\beta_2 \vv^{(k-1)} + (1-\beta_2)\big(\vg^{(k)}\big)^2,\label{eq:update-v}\\
&\widehat\vv^{(k)}=\max\big\{\widehat\vv^{(k-1)},\, \vv^{(k)}\big\}, \label{eq:update-vhat}\\
\label{eq:update-x}
&\vx^{(k+1)}\in\Argmin_{\vx\in X}\,\langle \vm^{(k)}, \vx\rangle + \frac{1}{2\alpha_k}\|\vx-\vx^{(k)}\|_{
	 \sqrt{\widehat\vv^{(k)}} 
}^2.
\end{align}
\vspace{-0.3cm}
}
\end{algorithm}

We emphasize the importance of the proposed method in training very large-scale deep learning models. It is well-known that adaptive SGMs converge significantly faster than a non-adaptive SGM; see \cite{duchi2011adaptive, kingma2014adam} for example or our numerical results in section~\ref{sec:numerical}. In addition, an async-parallel method can achieve much higher speed-up than its sync counterpart. Hence, it is paramount to design a method that can inherit advantages from both adaptiveness and async-parallelization, in order to efficiently train a very ``big'' deep learning model on multi-core or distributed-memory machines. 

We make the exploration on async-parallel adaptive SGM based on AMSGrad, because of its simplicity and nice numerical performance. Besides AMSGrad, there are several other adaptive SGMs in the literature, such as AdaGrad \cite{duchi2011adaptive}, RMSProp \cite{RMSprop2012}, Adam \cite{kingma2014adam}, Padam \cite{zhou2018convergence}, and AdaFom \cite{chen2018convergence}. While AdaGrad can have guaranteed sublinear convergence, its numerical performance can be significantly worse than AMSGrad, because the former simply uses $\vg^{(k)}$ instead of the exponential averaging gradient $\vm^{(k)}$ and also its effective learning rate can decay very fast. Adam can perform similarly or slightly better than AMSGrad, but its convergence is not guaranteed even for convex problems, due to a possibly too large learning rate. Padam is a generalized version of AMSGrad, and the performance of AdaFom is somehow between AdaGrad and AMSGrad. We believe that the convergence of AdaGrad, Padam and AdaFom can be inherited by their async versions.

\subsection{Related works}
In the literature, there are many works on SGMs. We briefly review those on async-parallel SGMs and adaptive SGMs, which are closely related to our work.

\vspace{0.1cm}

\noindent\textbf{Async-parallel non-adaptive SGM.}~~~The stochastic approximation method can date back to 1950's \cite{robbins1951stochastic} for solving a root-finding problem. The SGM, as a first-order stochastic approximation method, has been analyzed for both convex and non-convex problems; see \cite{nemirovski2009robust, polyak1992acceleration, ghadimi2013stochastic} for example. In order to achieve high speed-up, async-parallel SGM and/or distributed SGM with delayed gradient have been developed to solve problems that involve huge amount of data,  
e.g., in \cite{agarwal2011distributed,recht2011hogwild,lian2015asynchronous,feyzmahdavian2016asynchronous,mania2017perturbed, leblond2018improved}. 
The work \cite{agarwal2011distributed} assumes  
a distributed setting with a central node and analyzes the SGM with delayed stochastic gradients.  
\cite{lian2015asynchronous} studies the async-parallel SGM for non-convex optimization under both shared-memory and distributed settings. After obtaining a sample gradient, the shared-memory async-parallel method in \cite{lian2015asynchronous} needs to perform randomized coordinate update to avoid overwriting, because all threads are allowed to update the variables without coordination to each other. \cite{recht2011hogwild} also studies shared-memory async-parallel SGM. It does not require randomized coordinate update. However, its analysis relies on strong convexity of the objective and the assumption that the data involved in every sample function is sparse. \cite{leblond2018improved} further removes the sparsity requirement by providing an improved analysis for async-parallel stochastic incremental methods. In \cite{leblond2018improved}, a novel ``after read'' approach is introduced to order the iterate and address one independence issue between the random sample and the iterate that is read. 
\cite{backstrom2019mindthestep, sra2016adadelay} adapt the stepsize of the async SGM to the staleness of stochastic gradient, 
and \cite{lian2018asynchronous,wu2018error} explore the async SGM under a decentralized setting. 

\vspace{0.1cm}

\noindent\textbf{Adaptive SGM.}~~~
Adam \cite{kingma2014adam} is probably the most popular adaptive SGM. It was proposed for convex problems. However, the convergence of Adam is not guaranteed. To address the convergence issue, \cite{reddi2019convergence} makes a modification to the second-moment term in Adam and  proposes AMSGrad. It performs almost the same updates as those in \eqref{eq:update-m} through \eqref{eq:update-x}, with the only difference that AMSGrad uses non-fixed weights in computing $\vm^{(k)}$, i.e., it lets $\vm^{(k)}=\beta_{1,k}\vm^{(k-1)}+(1-\beta_{1,k})\vg^{(k)}$ for all $k\ge1$.  In order to guarantee sublinear convergence, \cite{reddi2019convergence} requires a diminishing sequence $\{\beta_{1,k}\}$, and to have a rate of $O(1/\sqrt{k})$, $\{\beta_{1,k}\}$ needs to decay as fast as $1/k$. However, \cite{reddi2019convergence} sets $\beta_{1,k} = \beta_1\in (0,1),\,\forall\, k\ge1$ in all its numerical experiments, and it turned out that the algorithm with a constant weight $\beta_1$ could perform significantly better than that with decaying weights. By new analysis, we will show, as a byproduct, that an $O(1/\sqrt{k})$ convergence rate can be achieved even with a constant weight. 
Later, \cite{tran2019convergence} proposes AdamX, which is similar to AMSGrad but addresses a flaw in the analysis of AMSGrad.  
AdamX embeds $\beta_{1,k}$ in updating $\widehat{\vv}^{(k)}$. However, it still requires a decaying $\beta_{1,k}$ to guarantee sublinear convergence. 
To have nice generalization, \cite{chen2018closing} proposes Padam that includes AMSGrad as a special case. It uses $-\vm^{(k)}\oslash(\widehat\vv^{(k)})^p$ as the search direction, where $p\in(0,0.5]$. When $p=\frac{1}{2}$, Padam reduces to AMSGrad. It was demonstrated that $p=\frac{1}{8}$ could yield the best numerical performance. 
To avoid extremely large or small learning rates, \cite{luo2019adaptive} proposes variants of Adam and AMSGrad by keeping the second-moment term in nonincreasing intervals. Asymptotically, they approach to non-adaptive SGMs. 
For strongly-convex online optimization, \cite{fang2019convergence}  
presents a variant of AMSGrad, and \cite{wang2020sadam} proposes SAdam, as a variant of Adam. 
For non-convex problems, \cite{chen2018convergence} gives a general framework of Adam-type SGMs and establishes convergence rate results. 
Padam is extended in \cite{zhou2018convergence} to non-convex cases. 
%
%
\cite{nazari2019dadam} presents a variant of AMSGrad by introducing one more moving-average term in the update of $\widehat \vv$, and 
the analysis is conducted for both convex and non-convex problems. 

\vspace{0.1cm}

\noindent\textbf{Async-parallel adaptive SGM.}~~~The async-parallel implementation of AdaGrad is explored in \cite{dean2012large}. Experimental results on training deep neural networks are shown to demonstrate the performance of the async-parallel AdaGrad. However, no convergence analysis is given in \cite{dean2012large}, and in addition, AdaGrad often performs significantly worse than AMSGrad. \cite{guan2017delay} proposes a delay-compensated asynchronous Adam, which exhibits advantages over an asynchronous nonadaptive SGM for solving deep learning problems. However, the theoretical result in \cite{guan2017delay} does not guarantee convergence to stationarity but simply implies that the expected value of gradient norm can be bounded.

For the readers' convenience, we compare, in Table~\ref{table:related}, APAM to several closely related methods based on a few important ingredients about the algorithms and the targeted problem. 

\begin{table}[h]
	\caption{ {\small A comparison of ingredients among several algorithms for solving problems in the form of \eqref{eq:stoc-prob}. In the second column, ``$F$ \& Constraint $X$'' reflects the underlying assumption on $F$ and feasibility constraint $X$:  ``cvx'' for convexity, ``noncvx'' for non-convexity, ``yes'' for closed convex constraint $X$, and ``no'' for unconstrained problems.
	In the third column, ``Adaptivity'' reflects whether the algorithm implements adaptivity. In the fourth column, ``Weights'' reflects the restriction on the momentum parameters in the adaptive algorithms: ``constant'' indicates a constant parameter choice (i.e., $(\beta_{1,k},\,\beta_{2,k})=(\beta_1,\beta_2), \forall\, k$), and ``decreasing'' indicates a decreasing parameter choice.  
	In the fifth column, ``Async. delayed'' reflects whether the algorithm has a convergence guarantee for its asynchronous implementation with delayed gradient information. In the last column, convergence rate results for both convex and non-convex models are listed: $\tau$ for the upper bound on the delay and $K$ for the total number of iterations; for convex models, the convergence is measured by the expected objective gap, while for non-convex models, it is measured by the expected stationarity violation. 
	Specifically, AdaDelay \cite{sra2016adadelay} has the assumption that the delay has 
	a bounded expectation $\EE[\tau_k]=\bar\tau < \infty$ and a bounded second moment $\EE[\tau_k^2]=\Omega(\bar\tau^2)$. 
		} 
	} 
	
	\label{table:related}
	\begin{center}
		\resizebox{.98\textwidth}{!}{
			\begin{tabular}{|c|c|c|c|c|c|}
				\hline
				Method                 & $F$ \& Constraint $X$ & Adaptivity & Weights $(\beta_{1,k},\,\beta_{2,k})$ & Async. delayed &  Order of convergence rate  \\ \hline\hline
				Mirror descent \cite{agarwal2011distributed}               & cvx \& yes       & no                 &  ---               &     yes    &  ---  \\
				AMSGrad \cite{reddi2019convergence} & cvx \& yes & yes & $\beta_{1,k}=\beta_1/k,\,\beta_1<\sqrt{\beta_2}$ & no & --- \\
				AdamX \cite{tran2019convergence} & cvx \& yes        & yes                  &      $\beta_{1,k}=\beta_1/k,\,\beta_1<\sqrt{\beta_2}$          &    no    &  ---  \\
				Padam \cite{chen2018closing} & cvx \& no        & yes                  &      $\beta_1< \beta_2^{2p},\, p\in[0,1/2]$          &    no    &  ---  \\
				AdaDelay \cite{sra2016adadelay} & cvx \& yes & no & --- & yes & $(\sqrt{1+\bar\tau} +\bar\tau^4/\sqrt{K})/{\sqrt{K}}$ \\
				AsySG-con \cite{lian2015asynchronous}       & noncvx \& no       & no                  &       ---     & yes   & $(1+\tau /\sqrt{K})/\sqrt{K}$  \\
				AMSGrad \& AdaFom \cite{chen2018convergence}      & noncvx \& no      & yes                  &       constant or decreasing     & no   & ---  \\\hline
				\multirow{ 2}{*}{APAM (this paper)} 
				& cvx \& yes       & yes                 &       constant            &    yes   & $(1+\tau^2 /\sqrt{K})/\sqrt{K}$  \\
				& noncvx \& no       & yes                 &       constant          &    yes   & $(1+\tau /K^{1/4}+\tau^2 /\sqrt{K})/\sqrt{K}$  \\[0.1cm]\hline       
			\end{tabular}
		}
	\end{center}
\end{table}

\subsection{Contributions}
Our contributions are three-fold. \emph{First}, we propose an async-parallel adaptive SGM, named APAM, which is an asynchronous version of AMSGrad in \cite{reddi2019convergence}. APAM works under both shared-memory and distributed settings. For both settings, we adopt a master-worker architecture. Only the master updates model parameters, while the workers compute stochastic gradients asynchronously. APAM is lock-free. The master can perform updates while the workers are reading/receiving variables, and also since only the master updates variables, there is no need to lock the writing process. 
To the best of our knowledge, APAM is the first async-parallel adaptive SGM that maintains the fast convergence of an adaptive SGM and also achieves a high parallelization speed-up.
\emph{Secondly}, we analyze the convergence rate of APAM for both convex and non-convex problems. For convex problems, we establish a sublinear convergence result in terms of the objective error, and for non-convex problems, we show a sublinear convergence result in terms of the violation of stationarity. The established results indicate that the staleness $\tau$ has little impact on the convergence speed if it is dominated by $K^{\frac{1}{4}}$, where $K$ is the maximum number of iterations. Therefore, if $\tau=o(K^{\frac{1}{4}})$, a nearly-linear speed-up can be achieved, and this is demonstrated by numerical experiments.
\emph{Thirdly}, over the course of analyzing APAM, we also conduct new convergence analysis for AMSGrad. Our convergence rate results do not require a diminishing sequence to weigh the gradients. In practice, constant weights are almost always adopted. Hence, our results bring the theory closer to practice.

\subsection{Notation and outline} We use lower-case bold letter $\vx,\vy,\ldots$ for vectors. The $i$-th component of a vector $\vx$ is denoted as $x_i$. 
For any two vectors $\vx$ and $\vy$ of the same size, ${\vx}\odot{\vy}$ denotes a vector by component-wise multiplication, and ${\vx}\oslash{\vy}$ denotes a vector by the component-wise division, 
with $\frac{0}{0}=0$. For any $\vv\ge\vzero$, 
$\sqrt{\vv}$ or $(\vv)^{\frac{1}{2}}$ denotes a vector by the component-wise square root. We add a superscript $^{(k)}$ to specify the iterate, i.e., $\vx^{(k)}$ denotes the $k$-th iterate. $\Diag(\vv)$ denotes the diagonal matrix with $\vv$ as the diagonal vector. 
Given $\vv\ge\vzero$, $\|\vx\|_\vv^2 :=\vx^\top \Diag(\vv) \vx$, and  
$\Proj_{X,\vv}(\vx) := \argmin_{\vy\in X}\|\vy-\vx\|_\vv^2.$ 
We use $\|\cdot\|$ for the Euclidean norm of a vector and also the spectral norm of a matrix. 
$[n]$ denotes $\{1,\ldots,n\}$, and for a subset $A\subseteq[n],$ $A^{c}$ denotes the complement set of $A$. $\tilde \nabla f(\vx)$ denotes a subgradient of $f$ at $\vx$, and it reduces to the gradient $\nabla f(\vx)$ if $f$ is differentiable. We let $\cH_k$ be the $\sigma$-algebra generated by $\{\vx^{(t)}\}_{t\le k}$.

\vspace{0.1cm}
\noindent\textbf{Outline.}~~The rest of the paper is outlined as follows. In section~\ref{sec:imp}, we give details on how to implement the proposed algorithm. Convergence analysis is given in section~\ref{sec:cvx} for convex problems and in section~\ref{sec:ncvx} for non-convex problems, and numerical results are shown in section~\ref{sec:numerical}. Finally, we conclude the paper in section~\ref{sec:conclusion}.

\section{Implementation of the proposed method}\label{sec:imp}

In this section, we give more details on how to implement Algorithm~\ref{alg:async-adp-sgm} and also how the delay happens as the workers run asynchronously in parallel.

\subsection{Organization of master and workers}
We first explain how the master and workers communicate under a shared-memory or distributed setting.  

\vspace{0.1cm}
\noindent\textbf{Shared-memory setting.}~~Suppose that there are multiple processors and all the data and variables (or model parameters) are stored in a global memory. We assign one or a few as the \emph{master}(\emph{s}). The updates to $\vx, \vm, \vv$ and $\widehat\vv$ in Algorithm~\ref{alg:async-adp-sgm} are all performed by the master(s), while the computation of $\vg$ is done by other processors (called \emph{worker}s). 
See the left of Figure \ref{fig:architecture} for an illustration.   
Every worker reads $\vx$ and data from the global memory, computes a stochastic gradient $\vg$, and saves it in a pre-assigned memory. If there is a $\vg$ that has not been used, then the master acquires it. Otherwise, the master computes one stochastic gradient by itself. We allow more than one processor to serve as the master in case one is not fast enough to digest the $\vg$ vectors fed by the workers. In the case of multiple master processors, we will partition the vectors into blocks and let one master processor update one block, and we synchronize all the master processors while performing the updates. However, we never synchronize the workers. 

Our shared-memory set-up is fundamentally different from existing ones, e.g., in \cite{lian2015asynchronous, recht2011hogwild, leblond2018improved}, which allow all processors to update the variables. Without coordination between the processors, overwriting issue will arise if all processors write to the memory at the same time. To avoid the issue, these existing works need to perform randomized coordinate updates \cite{lian2015asynchronous}, or require sparsity of the stochastic gradient \cite{recht2011hogwild}, or assume strong convexity of the objective \cite{leblond2018improved}. However, in training a deep learning model, neither the sparsity condition nor the strong convexity assumption will hold. In addition, the coordinate update will be inefficient because the whole $\vg$ is computed but just one or a few coordinate gradients are used. In contrast, our method does not have this issue due to the master-worker set-up. Furthermore, our set-up enables a simpler analysis without sacrificing the high parallelization speed-up.

\vspace{0.1cm}
\noindent\textbf{Distributed setting.}~~Suppose multiple processors do not share memory and hence data need to be transmitted through inter-process communication.
The master takes charge of updating $\vx, \vm, \vv$ and $\widehat\vv$. It sends $\vx$ to workers, and the workers compute and send stochastic gradients to the master for the update. See the right of Figure~\ref{fig:architecture} for an illustration. We assume that each worker has its own memory and can generate samples by the same distribution. 

\begin{figure}
\centering
\begin{subfigure}[b]{0.46\textwidth}
\centering
\begin{tikzpicture}[fill=blue!20, scale = 0.67]
\draw (-0.3,-2.5) [rounded corners, fill=red!10] rectangle (4.5, -1.5);
\draw (5.,-2.5) [rounded corners, fill=red!10] rectangle (10.8, -1.5);
\path (5,0) node (master) [rounded corners, rectangle, draw, fill=green!60] {master}
	(2.1,-2.) node {\footnotesize stochastic gradients $\vg$}
	(7.9,-2.) node {data and vectors $\vx,\vm,\vv,\hat\vv$}
	(1.5, -5.) node (w11) [rounded corners, rectangle, draw, fill] {worker}
	(5, -5.) node (w13)  {$\bm{\cdots\cdots}$}
	(8.5, -5.) node (w14) [rounded corners, rectangle, draw, fill] {worker};
	
\draw[ ->] (2,-1.4) -- (4.8,-0.5)
		node[pos = .5, rotate = 36, above]	 {};

\draw[ ->] (5.2,-0.5) -- (8,-1.4)  
		node[pos = .5, rotate = -35, above]	 {};	
		
\draw[ ->] 	(1.2, -4.5) -- (1.2, -2.6)		
		node[pos = .6, rotate = -90, above]	 {};
		
\draw[ ->] 	(4.6, -4.5) -- (1.7, -2.6)		
		node[pos = .55, rotate = -46, above]	 {};
		
\draw[ ->] 	(8.2, -4.5) -- (2, -2.6)		
		node[pos = .65, rotate = -25, above]	 {}; 
		
\draw[ ->] 	(7.5, -2.6)	 -- (1.6, -4.5)  	
		node[pos = .85, rotate = 27, above]	 {};
		
\draw[ ->] 	(8.3, -2.6)	 --  (5.0, -4.5)	
		node[pos = .82, rotate = 43, above]	 {};
		
\draw[ ->] 	(8.8, -2.6)	 --  (8.8, -4.5)		
		node[pos = .75, rotate = 90, above]	 {};						
\end{tikzpicture} 
\end{subfigure}\hfill
\begin{subfigure}[b]{0.49\textwidth}
\centering
\begin{tikzpicture}[fill=blue!20, scale = 0.67]
\path (0,0) node (master) [rounded corners, rectangle, draw, fill=green!60] {master}
	(-4, -3.5) node (agg1) [rounded corners, rectangle, draw, fill=red!25] {worker}
	(-1, -4.5) node (agg2) [rounded corners, rectangle, draw, fill=red!25] {worker}
	(2, -4.5) node (agg3)  {$\bm{\cdots\cdots}$}
	(4, -3.5) node (agg4) [rounded corners, rectangle, draw, fill=red!25] {worker};

\draw[ ->] (-4,-3.) -- (-1.3,-0.4)
		node[pos = .5, rotate = 50, above]	 {\footnotesize $\vg$};		
\draw[ ->] (-1,-4.1) -- (-0.5,-0.5)
		node[pos = .5, rotate = 83, above]	 {\footnotesize $\vg$};		
\draw[ ->] (2,-4.1) -- (0.5,-0.5);		
\draw[->] (4,-3.) -- (1.3,-0.4) 
		node[pos = .5, rotate = -50, above]	 {\footnotesize $\vg$};

\draw[ ->] (-1,-0.4) -- (-3.8,-3.1)
		node[pos = .5, rotate = 50, below]	 {\footnotesize $\vx$};		
\draw[ ->] (-0.2,-0.5) -- (-0.7,-4.1)
		node[pos = .5, rotate = 85, below]	 {\footnotesize $\vx$};		
\draw[ ->] (0.2,-0.5) -- (1.7,-4.1);		
\draw[->] (1,-0.4) --  (3.7,-3.1)
		node[pos = .5, rotate = -50, below]	 {\footnotesize $\vx$};

\end{tikzpicture} 
\end{subfigure}
\caption{{\small Shared memory setting (left) vs. distributed setting (right): a demonstration.}}\label{fig:architecture}
\end{figure}

\subsection{Iteration counter and staleness}\label{sec:staleness} Notice that Algorithm~\ref{alg:async-adp-sgm} is viewed from the perspective of the master. We use $k$ as the iteration counter. It increases by one whenever the master performs an update to $\vx$. Hence, $\vx^{(k)}$ denotes the iterate maintained by the master at the beginning of the $k$-th update, and $\vg^{(k)}$ is the stochastic gradient used in the $k$-th update. Since the master continuously updates $\vx$, after worker $\#i$ reads (or receives) the variable, the master may have already changed $\vx$ before it uses the stochastic gradient fed by worker $\#i$. Therefore, the stochastic gradient $\vg^{(k)}$ that is used to obtain $\vx^{(k+1)}$ may not be evaluated at the current iterate $\vx^{(k)}$ but at an outdated one. 
See Figure~\ref{fig:lockonoff} for an illustration.
More precisely, we have
\begin{equation}\label{eq:vg-k}
\textstyle \vg^{(k)}=\frac{1}{b_k}\sum_{i=1}^{b_k}\tilde\nabla f(\widehat \vx^{(k)}; \xi_i^{(k)}),
\end{equation}
where $b_k$ is the number of samples, and $\widehat \vx^{(k)}$ can be an outdated iterate or a mixture of several iterates; see \eqref{eq:shared} below for its expression. 

\begin{figure}[h]
\centering
\begin{subfigure}[t]{0.44\textwidth}
\centering
\begin{tikzpicture}[
	block/.style={
		draw,
		rectangle, 
		text width={width("update $\vx^{(2)}$")},text height={height("send $\vg^{(3)}$")},
		align=center,
		font=\small
	},
	fill=blue!20, scale = 0.4, every node/.style={scale=.7}]
	
\path 
(1.05,-2) node (x00) [ rectangle] {\Large $\ldots$}
(2.1,-2) node (x10) [ rectangle, draw, fill=blue!50, text height=60] { }
(6.5,-2) node (x11) [ rectangle, draw, fill=red!50, text height=60] { }
(9.05,-2.0) node (x12) [ rectangle, draw, fill=olive!50, text height=60] { }
(11.6,-2.0) node (x13) [ rectangle, draw, fill=violet!50, text height=60] { }
(3,-6.85) node[rectangle, draw, fill=blue!50, text width=30] (wr1l)  { }
(6.0,-6.85) node[draw,rectangle,fill=white] (wr1e) {compute $\vg^{(k+1)}$}
(12.3,-6.85) node[rectangle, draw, fill=violet!50] (wr1r)  {$\ldots$}
(2.1,-9.55) node[draw,rectangle,fill=white] (wr2e) {compute $\vg^{(k-1)}$}
(8.75,-9.55) node[rectangle, draw, fill=red!50, text width=60] (wr2r)  { }
(12.35,-9.55) node[draw,rectangle,fill=white] (wr2e2) {compute $\ldots$}
(3.7,-12.25) node[draw,rectangle,fill=white] (wr3e) {compute $\vg^{(k)}$}
(11.3,-12.25) node[rectangle, draw, fill=olive!50, text width=60] (wr3r)  { }
(2.1,.4) node(v1){{\color{blue}$\vx^{(k-1)}$}} 
(6.5,.4) node(v2){{\color{red}$\vx^{(k)}$}}
(9.05,.4) node(v3){{\color{olive}$\vx^{(k+1)}$}}
(11.6,.4) node(v4){{\color{violet}$\vx^{(k+2)}$}}

(-1.7,-6.85) node{\small Worker 1} 
(-1.7,-9.55) node{\small Worker 2} 
(-1.7,-12.25) node{\small Worker 3}
(-1.7,-2) node{\small Master};

\draw[dashed] (4.15,-0.0)--(4.15,-10.0);
\draw[dashed] (5.45,-0.0)--(5.45,-12.7);
\draw[dashed] (8.07,-0.0)--(8.07,-7.35);

\draw[dashed] (6.7,-0.0)--(6.7,-10);
\draw[dashed] (9.25,-0.0)--(9.25,-12.7);
\draw[dashed] (11.8,-0.0)--(11.8,-7.35);

\draw[-latex] (4.15,-2) -- (6.25,-2) ;
\draw[-latex] (6.7,-2) -- (8.8,-2) ;
\draw[-latex] (9.25,-2) -- (11.35,-2) ;

\end{tikzpicture} 
\caption{{\small \textbf{Delayed gradient in a distributed setting.} Each worker uses the $\vx$ vector it receives from the master to compute one $\vg$ vector. Due to asynchrony, the $\vg$ vector that master uses for update may not be calculated at the current $\vx$. For example, $\vg^{(k+1)}$ is computed by worker 1 at $\vx^{(k-1)}$ but used by master to update $\vx^{(k+1)}$ to $\vx^{(k+2)}$, which causes a delay of 2. 
}}\label{fig:distributed}
\end{subfigure}\hfill
\begin{subfigure}[t]{0.44\textwidth}
\centering
\begin{tikzpicture}[
block/.style={
draw,
rectangle, 
text width={width("calc. $\vx^{(1)}$")},text height={height("calc. $\vx^{(1)}$")},
align=center,
font=\small
},
fill=blue!20, scale = 0.4, every node/.style={scale=.7}]

	\path 
(2.5,-2) node (x11) [ rectangle] {\Large $\ldots$}
(4.0,-2) node (x12) [ rectangle, draw, fill=blue!50, text height=60] { }
(7.35,-2) node (x22) [ rectangle, draw, fill=red!50, text height=60] { }
	(2.2,-6.85) node[rectangle, draw, fill=white] (re1t)  {compute $\ldots$}
	(5.9,-6.85) node[rectangle, draw, fill=blue!50, text width=50] (re3t)  {}
	(8.1,-6.85) node[rectangle, draw, fill=red!50, text width=10] (re4t)  {}
	(2.9,-9.55) node[draw,rectangle,fill=white] (wr2e) {compute $\vg^{(k-1)}$}
	(3.9,-12.25) node[rectangle,draw, fill=white] (re1t)  {compute $\ldots$}
	(6.8,-12.25) node[rectangle, draw, fill=blue!50, text width=25] (wr3t)  { }
	(9.0,-12.25) node[rectangle, draw, fill=red!50, text width=35] (wr3e)  { }
(4.0,0.4) node(v1){{\color{blue}$\vx^{(k-1)}$}} 
(7.35,0.4) node(v2){{\color{red}$\vx^{(k)}$}}

(-2,-6.85) node{\small Worker 1} 
(-2,-9.55) node{\small Worker 2} 
(-2,-12.25) node{\small Worker 3}
(-2,-2) node{\small Master};

\draw[dashed] (5.0,0.1)--(5.0,-10);

\draw[-latex] (5.0,-2) -- (7.1,-2) ;
\end{tikzpicture} 
\caption{\small \textbf{Inconsistent reading in a shared memory setting.} Each worker reads $\vx$ from the shared memory and then computes one $\vg$ vector. Due to asynchrony and without lock, the read can be inconsistent. For example, while worker 1 reads $\vx^{(k-1)}$, the $\vx$ vector is updated by master to $\vx^{(k)}$ before worker 1 finishes its reading, and thus worker 1 reads a mixture of $\vx^{(k-1)}$ and $\vx^{(k)}$.}\label{fig:LOF2}
\end{subfigure}
\caption{{\small a demonstration of consistent but outdated read in the distributed setting (left subfigure) and inconsistent read in the shared memory setting (right subfigure).} 
		}\label{fig:lockonoff}
\end{figure}

\subsection{Consistent and inconsistent read}  
In the distributed setting, we have $\widehat \vx^{(k)}=\vx^{k-\tau_k}$ for some $\tau_k\ge0$ due to communication delay, i.e., the $\vx$ received by a worker is a \emph{consistent} but potentially outdated iterate. In the shared-memory setting, since we do not lock $\vx$ when a worker computes a stochastic gradient, $\widehat\vx^{(k)}$ may not be any iterate that ever exists in the memory but is a combination of a few iterates, i.e., the reading is \emph{inconsistent}; see Figure~\ref{fig:lockonoff} for an illustration. 

Suppose that the read of every coordinate is atomic. Then for each $i$, it must hold $\widehat x_i^{(k)}=x_i^{(k-j)}$ for some integer $j\ge0$. 
Let $I_j\coloneqq \left\{i\colon x^{(k-j)}_i=\widehat x^{(k)}_i\right\}$ and 
$\cI_j\coloneqq \cup_{l=0}^{j}I_l$ for each $j\ge 0$. 
Let $\tau_k = \min\left\{j\colon \cI_j=[n]\right\}$. Then $\cI_{\tau_k}=[n]$, and $\widehat\vx^{(k)}$ can be formed from 
$\{\vx^{(k-\tau_k)},\ldots,\vx^{(k)}\}$. 
By the definition of $\cI_l$, we have $\cI_{l-1}\subseteq \cI_l$, and thus
{
\begin{align}\label{eq:shared}
\widehat \vx^{(k)}
= & ~\vx^{(k)}\odot\vone_{\cI_0} + \sum_{l=1}^{\tau_k} \vx^{(k-l)}\odot(\vone_{\cI_l}-\vone_{\cI_{l-1}})\cr
= & ~ \vx^{(k)} - \vx^{(k)}\odot \vone_{\cI^c_0} + \sum_{l=1}^{\tau_k} \vx^{(k-l)}\odot(\vone_{\cI^c_{l-1}}-\vone_{\cI^c_l})\cr
= & ~ \vx^{(k)} - \sum_{l = 0}^{\tau_k-1}(\vx^{(k-l)}-\vx^{(k-l-1)})\odot\vone_{\cI^c_l},
\end{align}
}where we have used $\cI_{\tau_k}=[n]$, and $\vone_A$ represents the vector with one at each coordinate $i\in A$ and zero elsewhere.
The expression in \eqref{eq:shared} generalizes the relation for atomic lock-free updates in \cite{lian2015asynchronous, peng2016arock}.
It follows from \eqref{eq:shared} that 
\begin{align}\label{eq:x-hat1}
\|\widehat \vx^{(k)}- \vx^{(k)}\| \le &~\sum_{l=0}^{\tau_k-1} \big\|(\vx^{(k-l)}-\vx^{(k-l-1)})\odot \vone_{\cI^c_{l}}\big\|
\le \sum_{l=0}^{\tau_k-1} \big\| \vx^{(k-l)}-\vx^{(k-l-1)} \big\|,
\end{align}
and
\beq\label{eq:x-hat2}
\|\widehat \vx^{(k)}- \vx^{(k)}\|^2 \leq 
\tau_k\sum_{l=0}^{\tau_k-1} \|\vx^{(k-l)}-\vx^{(k-l-1)}\|^2.
\eeq
These relations are important in our analysis to handle asynchrony.

\section{Convergence for convex problems}\label{sec:cvx}

In this section, we analyze Algorithm~\ref{alg:async-adp-sgm} for convex problems. Throughout the analysis, we make the following assumptions. 
\begin{assumption}[convexity]\label{assump:cvx}
$F$ in \eqref{eq:stoc-prob} is convex, and $X$ is convex and compact.
\end{assumption}
Under Assumption~\ref{assump:cvx}, we define
$$D_\infty = \max_{\vx,\vy\in X}\|\vx-\vy\|_\infty.$$
\begin{assumption}[bounded gradient in expectation]\label{assump:bound-grad}
There is a finite number $G_1$ such that $\EE_\xi\|\tilde\nabla f(\vx,\xi)\|_1 \le G_1$, $\forall \vx\in X.$ 
\end{assumption}

\begin{assumption}[bounded gradient almost surely]\label{assump:bound-grad2}
	There is a finite number $G_\infty$ such that $ \|\tilde\nabla f(\vx;\xi)\|_\infty\le G_\infty,\, \forall \vx\in X,$ and almost surely for all $\xi.$
\end{assumption}

\begin{assumption}[unbiased gradient]\label{assump:unbiased}
$\vg^{(k)}$ is an unbiased estimate of a subgradient of $F$ at $\widehat\vx^{(k)}$ for each $k$, i.e., $\EE\big[\vg^{(k)}\,|\,\cH_k\big]\in\partial F(\widehat\vx^{(k)})$. 
\end{assumption}

We make a few remarks about the assumptions. The boundedness assumption on $X$ is required to analyze an adaptive SGM for convex problems in existing works, e.g., \cite{duchi2011adaptive, kingma2014adam, reddi2019convergence}. Assumption~\ref{assump:unbiased} is standard in the analysis of SGMs.  
It will hold in the distributed setting if data on all workers follow the same distribution and $\{\xi_i^{(k)}\}$ in \eqref{eq:vg-k} are sampled independently from the distribution. However, in the shared memory setting, the condition can only hold under certain ideal cases when asynchronous updates are performed. Roughly speaking, different realizations of $\{\xi_i^{(k)}\}$ in \eqref{eq:vg-k} can incur different cost of computing $\vg^{(k)}$ and thus affect the iteration counter $k$, i.e., $\widehat\vx^{(k)}$ can depend on $\{\xi_i^{(k)}\}$. Hence, the unbiased assumption can hold only if the cost of computing a stochastic subgradient is the same for any realization of $\xi$ and in addition the workers have the same computing power. \cite{leblond2018improved} addresses the independence issue by an ``after read'' approach. However, it could be computationally inefficient to first read the entire $\widehat\vx^{(k)}$ and then sample $\{\xi_i^{(k)}\}$ to compute $\vg^{(k)}$. This is also noticed in \cite{leblond2018improved}, which implements a different version of its analyzed method. The issue is also addressed in \cite{mania2017perturbed}, which essentially assumes sparsity of each sample gradient. Both of \cite{leblond2018improved, mania2017perturbed} require strong convexity on the objective. It is unclear whether the issue can be addressed for convex or non-convex cases.

We first establish a couple of lemmas that will be used to show the convergence rate of Algorithm~\ref{alg:async-adp-sgm} either with delay or without delay. Their proofs are given in the appendix.
\begin{lemma}\label{lem:ineq-from-opt}
Let $\{(\vx^{(k)},\vm^{(k)}, \widehat\vv^{(k)})\}$ be the sequence from Algorithm~\ref{alg:async-adp-sgm} with step size sequence $\{\alpha_k\}$. Under Assumption~\ref{assump:cvx}, 
it holds for any $t\ge 1$ and any $\vx\in X$ that
\begin{align}\label{eq:bd-crs-term}
& ~ \textstyle (1-\beta_1)\sum_{k=1}^t \left(\sum_{j=k}^t\alpha_j\beta_1^{j-k}\right)\left\langle \vx^{(k)} - \vx, \vg^{(k)} \right\rangle  
\le  \frac{D_\infty^2}{2}\|\sqrt{\widehat\vv^{(t)}}\|_1 + \frac{1}{2(1-\beta_1)^2}\sum_{k=1}^t\alpha_k^2\|\vm^{(k)}\|_{(\widehat\vv^{(k)})^{-\frac{1}{2}}}^2.
\end{align}
\end{lemma}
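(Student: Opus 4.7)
The plan is to work from the optimality condition for $\vx^{(k+1)}$ in \eqref{eq:update-x} (interpreting the weight vector as $\sqrt{\widehat\vv^{(k)}}$, consistent with the unconstrained reduction stated in the text). Strong convexity of the proximal objective then delivers the standard three-point inequality: for every $\vx\in X$,
\begin{equation*}
\alpha_k\langle\vm^{(k)},\vx^{(k+1)}-\vx\rangle\le \tfrac12\|\vx-\vx^{(k)}\|_{\sqrt{\widehat\vv^{(k)}}}^2-\tfrac12\|\vx-\vx^{(k+1)}\|_{\sqrt{\widehat\vv^{(k)}}}^2-\tfrac12\|\vx^{(k+1)}-\vx^{(k)}\|_{\sqrt{\widehat\vv^{(k)}}}^2.
\end{equation*}
I will abbreviate $w_k:=\sum_{j=k}^t\alpha_j\beta_1^{j-k}$; the left-hand side of \eqref{eq:bd-crs-term} then equals $(1-\beta_1)\sum_{k=1}^t w_k\langle\vg^{(k)},\vx^{(k)}-\vx\rangle$. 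The sequence $\{w_k\}$ satisfies the backward recursion $w_k=\alpha_k+\beta_1 w_{k+1}$ with $w_{t+1}=0$, and under the natural non-increasing choice of $\{\alpha_k\}$ (used throughout the convergence theorems) also $w_k\le\alpha_k/(1-\beta_1)$.

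The main algebraic step, and what I expect to be the chief obstacle, is to re-express $(1-\beta_1)\sum_k w_k\langle\vg^{(k)},\vx^{(k)}-\vx\rangle$ in a form amenable to the three-point inequality. Using $(1-\beta_1)\vg^{(k)}=\vm^{(k)}-\beta_1\vm^{(k-1)}$ together with $\vm^{(0)}=\vzero$, multiplying by $w_k$ and summing in $k$, then shifting the index in the $\vm^{(k-1)}$ sum and applying $\beta_1 w_{k+1}=w_k-\alpha_k$, the LHS rearranges as
\begin{equation*}
(1-\beta_1)\sum_{k=1}^t w_k\langle\vg^{(k)},\vx^{(k)}-\vx\rangle = \sum_{k=1}^t w_k\langle\vm^{(k)},\vx^{(k)}-\vx^{(k+1)}\rangle + \sum_{k=1}^t\alpha_k\langle\vm^{(k)},\vx^{(k+1)}-\vx\rangle.
\end{equation*}
This cleanly splits into an $\alpha_k$-weighted ``one-ahead'' part, suited to the three-point inequality, and a $w_k$-weighted ``backward-step'' part, suited to Young's inequality. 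Getting the index arithmetic and cancellation right is the fiddly piece.

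For the $\alpha_k$-sum, summing the three-point inequality over $k$ and telescoping the difference of squared distances with the aid of the monotonicity $\widehat\vv^{(k)}\ge\widehat\vv^{(k-1)}$ from \eqref{eq:update-vhat} and the diameter bound $\|\vx-\vx^{(k)}\|_\infty\le D_\infty$ (Assumption~\ref{assump:cvx}) yields $\tfrac{D_\infty^2}{2}\|\sqrt{\widehat\vv^{(t)}}\|_1$, while leaving the negative residual $-\tfrac12\sum_k\|\vx^{(k+1)}-\vx^{(k)}\|_{\sqrt{\widehat\vv^{(k)}}}^2$. Young's inequality
\begin{equation*}
w_k\langle\vm^{(k)},\vx^{(k)}-\vx^{(k+1)}\rangle\le \tfrac{w_k^2}{2}\|\vm^{(k)}\|_{(\widehat\vv^{(k)})^{-1/2}}^2+\tfrac12\|\vx^{(k)}-\vx^{(k+1)}\|_{\sqrt{\widehat\vv^{(k)}}}^2
\end{equation*}
bounds the $w_k$-sum, and its squared-distance piece cancels the residual exactly. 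Substituting $w_k^2\le\alpha_k^2/(1-\beta_1)^2$ converts the surviving $\tfrac12\sum_k w_k^2\|\vm^{(k)}\|_{(\widehat\vv^{(k)})^{-1/2}}^2$ into $\tfrac{1}{2(1-\beta_1)^2}\sum_k\alpha_k^2\|\vm^{(k)}\|_{(\widehat\vv^{(k)})^{-1/2}}^2$, completing the proof.
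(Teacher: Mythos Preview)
Your proof is correct and follows essentially the same approach as the paper: both use the three-point inequality from the optimality condition of \eqref{eq:update-x}, Young's inequality in the $\sqrt{\widehat\vv^{(k)}}$-norm, the telescoping via the monotonicity $\widehat\vv^{(k)}\ge\widehat\vv^{(k-1)}$ together with the $D_\infty$ bound, and the estimate $w_k\le\alpha_k/(1-\beta_1)$ (which, as you note, relies on non-increasing $\{\alpha_k\}$; the paper uses the same bound without stating it in the lemma). The only difference is organizational: the paper first unfolds $\langle\vx^{(k+1)}-\vx,\vm^{(k)}\rangle$ via the forward recursion for $\vm^{(k)}$ and then sums over $k$ and swaps summation order, whereas you obtain the identical decomposition directly by summation by parts using $(1-\beta_1)\vg^{(k)}=\vm^{(k)}-\beta_1\vm^{(k-1)}$ and the backward recursion $w_k=\alpha_k+\beta_1 w_{k+1}$.
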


\begin{lemma}\label{lem:bd-m-v}
Let $\{(\vx^{(k)},\vm^{(k)}, \widehat\vv^{(k)})\}$ be the sequence from Algorithm~\ref{alg:async-adp-sgm}. Under Assumption~\ref{assump:bound-grad}, it holds 
\begin{equation}\label{eq:bd-m-v}
 \EE\|\vm^{(k)}\|_{(\widehat\vv^{(k)})^{-\frac{1}{2}}}^2\le \frac{G_1}{\sqrt{1-\beta_2}}. 
\end{equation}
\end{lemma}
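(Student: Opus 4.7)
The plan is to expand $\vm^{(k)}$ as an exponentially weighted sum of past stochastic gradients, bound each term of the weighted norm coordinate-wise by exploiting that $\widehat\vv^{(k)}$ dominates every prior $\vv^{(j)}$, and then sum back up.

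First, unrolling the recursion \eqref{eq:update-m} gives $\vm^{(k)} = (1-\beta_1)\sum_{j=1}^{k}\beta_1^{k-j}\vg^{(j)}$ (starting from $\vm^{(0)}=\vzero$). Applying the Cauchy--Schwarz inequality in the form $\bigl(\sum_j a_j b_j\bigr)^2\le\bigl(\sum_j a_j\bigr)\bigl(\sum_j a_j b_j^2\bigr)$ with $a_j=\beta_1^{k-j}$ and using $\sum_{j=1}^{k}\beta_1^{k-j}\le\frac{1}{1-\beta_1}$, one obtains coordinatewise
\[
(m_i^{(k)})^2 \le (1-\beta_1)\sum_{j=1}^{k}\beta_1^{k-j}(g_i^{(j)})^2.
\]

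Next I will use the definition of $\widehat\vv^{(k)}$. From \eqref{eq:update-v}--\eqref{eq:update-vhat}, for every $j\le k$ we have $\widehat v_i^{(k)}\ge \widehat v_i^{(j)}\ge v_i^{(j)}\ge (1-\beta_2)(g_i^{(j)})^2$, hence $\sqrt{\widehat v_i^{(k)}}\ge \sqrt{1-\beta_2}\,|g_i^{(j)}|$. Dividing the previous display by $\sqrt{\widehat v_i^{(k)}}$ and using this lower bound on one copy of $|g_i^{(j)}|$ yields
\[
\frac{(m_i^{(k)})^2}{\sqrt{\widehat v_i^{(k)}}}\le \frac{1-\beta_1}{\sqrt{1-\beta_2}}\sum_{j=1}^{k}\beta_1^{k-j}|g_i^{(j)}|.
\]
Summing this over the coordinates $i$ gives
\[
\|\vm^{(k)}\|_{(\widehat\vv^{(k)})^{-1/2}}^2 \le \frac{1-\beta_1}{\sqrt{1-\beta_2}}\sum_{j=1}^{k}\beta_1^{k-j}\|\vg^{(j)}\|_1.
\]

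Finally, I take expectations. From \eqref{eq:vg-k} together with Assumption~\ref{assump:bound-grad} (and the triangle inequality), $\EE\|\vg^{(j)}\|_1\le G_1$. Plugging this in and summing the geometric series $\sum_{j=1}^{k}\beta_1^{k-j}\le\frac{1}{1-\beta_1}$ produces exactly $\frac{G_1}{\sqrt{1-\beta_2}}$, establishing \eqref{eq:bd-m-v}. The only subtlety is the Cauchy--Schwarz step used to decouple $(m_i^{(k)})^2$ into a convex combination of $(g_i^{(j)})^2$; everything else is straightforward and does not require any property of the $\widehat\vx^{(k)}$ or of the delay, so the argument works identically in the synchronous and asynchronous settings.
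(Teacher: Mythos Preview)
Your proof is correct and follows essentially the same approach as the paper's. Both arguments expand $\vm^{(k)}$ as an exponentially weighted sum of past gradients, apply Cauchy--Schwarz to decouple the square, exploit the lower bound $\widehat v_i^{(k)}\ge(1-\beta_2)(g_i^{(j)})^2$ to peel off one factor of $|g_i^{(j)}|$, and finish via $\EE\|\vg^{(j)}\|_1\le G_1$. The only cosmetic difference is that the paper introduces the auxiliary $G_i^{(k)}=\max_{j\le k}|g_i^{(j)}|$ and works vectorially (first bounding $\|\vm^{(k)}\oslash(\widehat\vv^{(k)})^{1/4}\|$, then squaring via Cauchy--Schwarz), whereas you apply Cauchy--Schwarz coordinatewise on $(m_i^{(k)})^2$ first and then divide; your route is slightly more direct but the content is the same.
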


\subsection{Convergence rate result for the case without delay}\label{sec:no.delay}

In this subsection, we use the previous two lemmas to show the convergence rate for the no-delay case, i.e., $\widehat \vx^{(k)}= \vx^{(k)},\forall \, k\ge1$ in \eqref{eq:vg-k}. 
Although the no-delay case is not our main focus, our results improve over existing ones about AMSGrad. 
\begin{theorem}[convex case without delay]\label{thm:cvx-no-delay}
Let $\{\vx^{(k)}\}$ be the sequence from Algorithm~\ref{alg:async-adp-sgm} with step size sequence $\{\alpha_k\}$.
Given an integer $K>0$, let $\bar\vx^{(K)} = \sum_{k=1}^K\frac{\sum_{j=k}^K\alpha_j\beta_1^{j-k}\vx^{(k)}}{\sum_{t=1}^K\left(\sum_{j=t}^K\alpha_j\beta_1^{j-t}\right)}$. Then under Assumptions~\ref{assump:cvx}-\ref{assump:unbiased}, we have the following results:
\begin{enumerate}
\item\label{thm:rate-cvx-no-delay}  If $\alpha_k = \frac{\alpha}{\sqrt K}, \,\forall k\ge 1$, for some $\alpha>0$, 
then
\begin{equation}\label{eq:rate-cvx-no-delay}
\EE\big[F(\bar\vx^{(K)}) - F^*\big] \le \frac{n D_\infty^2 G_\infty + \frac{\alpha^2}{(1-\beta_1)^2}\frac{G_1}{\sqrt{1-\beta_2}}}{2\alpha \sqrt{K}(1-\beta_1)}.
\end{equation}
\item\label{thm:rate-cvx-no-delay-adastp} If $\alpha_k = \frac{\alpha}{\sqrt k},\,\forall k\ge1$, for some $\alpha>0$, 
then
\begin{align}\label{eq:rate-cvx-no-delay-adastp}
\EE\big[F(\bar\vx^{(K)}) - F^*\big] 
\le  \frac{n D_\infty^2 G_\infty + \frac{\alpha^2(1+\log K)}{(1-\beta_1)^2}\frac{G_1}{\sqrt{1-\beta_2}}}{4\alpha (\sqrt{K+1}-1)(1-\beta_1)}.
\end{align}
\end{enumerate}
\end{theorem}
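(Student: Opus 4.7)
The plan is to combine the two lemmas with convexity and an averaging argument, then specialize to each choice of $\{\alpha_k\}$. First I would apply Assumption~\ref{assump:unbiased} in its no-delay form: since $\widehat\vx^{(k)}=\vx^{(k)}$ is $\cH_k$-measurable, $\EE[\vg^{(k)}\mid\cH_k]\in\partial F(\vx^{(k)})$, so by convexity of $F$,
\begin{equation*}
\EE\bigl\langle \vx^{(k)}-\vx^*,\vg^{(k)}\bigr\rangle \;\ge\; \EE\bigl[F(\vx^{(k)})-F^*\bigr]
\end{equation*}
for any minimizer $\vx^*\in X$. Setting $\vx=\vx^*$ and $t=K$ in Lemma~\ref{lem:ineq-from-opt}, taking expectations, and writing $w_k=\sum_{j=k}^K\alpha_j\beta_1^{j-k}$, this gives
\begin{equation*}
(1-\beta_1)\sum_{k=1}^K w_k\,\EE[F(\vx^{(k)})-F^*] \;\le\; \tfrac{D_\infty^2}{2}\EE\|\sqrt{\widehat\vv^{(K)}}\|_1 + \tfrac{1}{2(1-\beta_1)^2}\sum_{k=1}^K \alpha_k^2\,\EE\|\vm^{(k)}\|_{(\widehat\vv^{(k)})^{-\frac12}}^2.
\end{equation*}

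Next I would control the two terms on the right. The coordinatewise bound $\|\tilde\nabla f(\vx;\xi)\|_\infty\le G_\infty$ from Assumption~\ref{assump:bound-grad} propagates through the recursion for $\vv^{(k)}$ and the max in \eqref{eq:update-vhat} to yield $\widehat v_i^{(k)}\le G_\infty^2$ for every $i$, hence $\|\sqrt{\widehat\vv^{(K)}}\|_1\le nG_\infty$ deterministically. The second sum is bounded directly by Lemma~\ref{lem:bd-m-v}, giving $\tfrac{G_1}{\sqrt{1-\beta_2}}\sum_{k=1}^K\alpha_k^2$.

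To extract the convergence rate for the weighted average $\bar\vx^{(K)}$, I would apply Jensen's inequality to the convex function $F$: since $\bar\vx^{(K)}=\sum_{k=1}^K w_k\vx^{(k)}/\sum_{k=1}^K w_k$ (and $\bar\vx^{(K)}\in X$ by convexity of $X$),
\begin{equation*}
\EE[F(\bar\vx^{(K)})-F^*] \;\le\; \frac{\sum_{k=1}^K w_k\,\EE[F(\vx^{(k)})-F^*]}{\sum_{k=1}^K w_k}.
\end{equation*}
Combining with the display above yields a master bound of the form
\begin{equation*}
\EE[F(\bar\vx^{(K)})-F^*] \;\le\; \frac{n D_\infty^2 G_\infty + \frac{G_1}{(1-\beta_1)^2\sqrt{1-\beta_2}}\sum_{k=1}^K\alpha_k^2}{2(1-\beta_1)\sum_{k=1}^K w_k}.
\end{equation*}

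Finally I would specialize the arithmetic. The key observation is $w_k\ge\alpha_k$ (the $j=k$ term), so $\sum_{k=1}^K w_k\ge\sum_{k=1}^K\alpha_k$. For part~\ref{thm:rate-cvx-no-delay} with $\alpha_k=\alpha/\sqrt K$ this gives $\sum_k\alpha_k=\alpha\sqrt K$ and $\sum_k\alpha_k^2=\alpha^2$, which plugs in to reproduce \eqref{eq:rate-cvx-no-delay} exactly. For part~\ref{thm:rate-cvx-no-delay-adastp} with $\alpha_k=\alpha/\sqrt k$, I would use the standard integral estimates $\sum_{k=1}^K\tfrac{1}{k}\le 1+\log K$ and $\sum_{k=1}^K\tfrac{1}{\sqrt k}\ge 2(\sqrt{K+1}-1)$ to obtain \eqref{eq:rate-cvx-no-delay-adastp}. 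The only mildly delicate point is the Jensen step combined with the weighting $w_k$ rather than $\alpha_k$—this is where using the cheap bound $w_k\ge\alpha_k$ keeps the denominator clean; any attempt to evaluate $\sum_k w_k$ exactly via the geometric identity $w_k=\alpha_k(1-\beta_1^{K-k+1})/(1-\beta_1)$ (when $\alpha_k$ is constant) would only improve constants but not the asymptotic rate, so I would not pursue it.
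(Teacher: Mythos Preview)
Your proposal is correct and follows essentially the same route as the paper's proof: combine Lemma~\ref{lem:ineq-from-opt} with Lemma~\ref{lem:bd-m-v} in expectation, use unbiasedness plus convexity to replace the inner product by $F(\vx^{(k)})-F^*$, bound $\|\sqrt{\widehat\vv^{(K)}}\|_1\le nG_\infty$, apply Jensen to pass to $\bar\vx^{(K)}$, and use $w_k\ge\alpha_k$ together with the same integral estimates on $\sum_k\alpha_k$ and $\sum_k\alpha_k^2$. The paper phrases the lower bound on $\sum_k w_k$ via the geometric-sum identity $\sum_{j=k}^K\beta_1^{j-k}\in[1,\tfrac{1}{1-\beta_1})$ in the constant-stepsize case, but this is exactly your observation that the $j=k$ term already gives $w_k\ge\alpha_k$.
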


\begin{proof}
Taking expectation over both sides of \eqref{eq:bd-crs-term} and using Lemma~\ref{lem:bd-m-v}, we have
\begin{equation}\label{eq:ineq-cvx-case}
(1-\beta_1)\sum_{k=1}^t \left(\sum_{j=k}^t\alpha_j\beta_1^{j-k}\right)\EE\left\langle \vx^{(k)} - \vx, \vg^{(k)} \right\rangle  \le \frac{D_\infty^2}{2}\EE\|\sqrt{\widehat\vv^{(t)}}\|_1 + \frac{G_1}{2(1-\beta_1)^2\sqrt{1-\beta_2}}\sum_{k=1}^t\alpha_k^2.
\end{equation}
From Assumption~\ref{assump:unbiased}, we have \begin{equation}\label{eq:prob-crs-term}
\EE\big\langle \vx^{(k)} - \vx, \vg^{(k)} \big\rangle = \EE \big\langle \vx^{(k)} - \vx, \tilde\nabla F(\widehat \vx^{(k)})\big\rangle,
\end{equation}
 where $\tilde\nabla F(\widehat\vx^{(k)}) \in \partial F(\widehat\vx^{(k)})$. Hence, by the convexity of $F$ and $\widehat \vx^{(k)}= \vx^{(k)}$, it holds $\EE\big[F(\vx^{(k)})-F(\vx)\big]\le\EE\left\langle \vx^{(k)} - \vx, \vg^{(k)} \right\rangle$, and thus \eqref{eq:ineq-cvx-case} indicates
\begin{equation}\label{eq:fun-ineq-cvx-case}
(1-\beta_1)\sum_{k=1}^t \left(\sum_{j=k}^t\alpha_j\beta_1^{j-k}\right)\EE\big[F(\vx^{(k)})-F(\vx)\big]  \le \frac{D_\infty^2}{2}\EE\|\sqrt{\widehat\vv^{(t)}}\|_1 + \frac{G_1}{2(1-\beta_1)^2\sqrt{1-\beta_2}}\sum_{k=1}^t\alpha_k^2.
\end{equation}

Notice that $\sum_{j=k}^K\beta_1^{j-k}=\frac{1-\beta_1^{K-k+1}}{1-\beta_1}\in [1, \frac{1}{1-\beta_1})$. Hence, when $\alpha_k = \frac{\alpha}{\sqrt K}$, it holds 
$\sum_{k=1}^K\alpha_k^2 = \alpha^2$ and $\sum_{k=1}^K\left(\sum_{j=k}^K\alpha_j\beta_1^{j-k}\right) \ge \alpha \sqrt{K}.$
By the convexity of $F$, we have 
\begin{equation}\label{eq:cvx-ineq-F}
\sum_{t=1}^K\left(\sum_{j=t}^K\alpha_j\beta_1^{j-t}\right) F(\bar\vx^{(K)}) \le \sum_{k=1}^K\left(\sum_{j=k}^K\alpha_j\beta_1^{j-k}\right)F(\vx^{(k)}).
\end{equation} 
In addition, we have from \eqref{eq:rel-hat-vik} and Assumption~\ref{assump:bound-grad2} that $\EE\|\sqrt{\widehat\vv^{(t)}}\|_1 \le n G_\infty$. Now divide both sides of \eqref{eq:fun-ineq-cvx-case} by $\sum_{k=1}^K\left(\sum_{j=k}^K\alpha_j\beta_1^{j-k}\right)$, take $\vx$ as an optimal solution $\vx^*$, and let $t = K$. We obtain  the desired result in \eqref{eq:rate-cvx-no-delay}. 

When $\alpha_k = \frac{\alpha}{\sqrt k}$, it holds 
$$\sum_{k=1}^K\alpha_k^2 =\sum_{k=1}^K\frac{\alpha^2}{k}\le \alpha^2+ \int_{1}^K \frac{\alpha^2}{x}dx \le \alpha^2(1+\log K) $$ and $$\sum_{k=1}^K\left(\sum_{j=k}^K\alpha_j\beta_1^{j-k}\right)\ge \sum_{k=1}^K\frac{\alpha}{\sqrt{k}}\ge \alpha\int_{1}^{K+1} \frac{\alpha}{\sqrt{x}}dx \ge  2\alpha(\sqrt{K+1}-1).$$
Now utilizing the above bounds and \eqref{eq:cvx-ineq-F} and following the same arguments to show \eqref{eq:rate-cvx-no-delay}, 
we obtain  the desired result in \eqref{eq:rate-cvx-no-delay-adastp} and completes the proof. 
\end{proof}

\begin{remark} {Our rate is in the same order as that in \cite{reddi2019convergence}. However, by new analysis, we do not require an exponential or harmonic decaying sequence $\beta_{1,k}$ in computing $\vm$ vectors in \eqref{eq:update-m}. The decaying weight is required in \cite{reddi2019convergence} and also its follow-up works such as \cite{chen2018closing, luo2019adaptive}. Numerically, a fixed weight $\beta_1$ can give significantly better results, and indeed \cite{reddi2019convergence} uses $\beta_{1,k}=\beta_1,\forall k$ in all its experiments. The recent works \cite{chen2018convergence, zhou2018convergence} have also weakened the condition on decreasing $\beta_{1,k}$ for smooth non-convex cases. However, none of these works has dropped the assumption $\beta_1\le \sqrt{\beta_2}$ that is required by \cite{reddi2019convergence}. Our result does not need this condition. } 
\end{remark}

\subsection{Convergence rate result for the case with delay}\label{sec:delay}

In this subsection, we analyze the proposed algorithm when there is delay, i.e., $\tau_k>0$ in \eqref{eq:shared}. The delay naturally happens for asynchronous computing. It causes one main difficulty in bounding the expected objective error $\EE\big[F(\vx^{(k)})-F(\vx)\big]$ from using \eqref{eq:prob-crs-term}. That is because the difference $\tilde\nabla F(\widehat \vx^{(k)}) - \tilde\nabla F(\vx^{(k)})$ in general will not vanish as $\widehat \vx^{(k)}\neq \vx^{(k)}$ caused by the delay. Nevertheless, an ergodic sublinear convergence result can still be guaranteed under a few additional mild assumptions. 

\begin{assumption}[smoothness]\label{smoothness}
$F$ is $L$-smooth, i.e., $\|\nabla F(\vx)-\nabla F(\vy)\|\leq L \|\vx-\vy\|,\, \forall\,\vx,\vy\in \RR^n$.
\end{assumption}

\begin{assumption}[bounded staleness]\label{assump:bound-tau}
There is a finite integer $\tau$ such that $\tau_k\le \tau$ for all $k\ge1$.
\end{assumption}

With the master-worker set-up, we can measure the delay at the master, by counting the number of updates that are performed between two stochastic gradients computed by the same worker. Hence, by discarding too staled stochastic gradient, we can bound the staleness. In practice, we usually do not need to track the staleness. As mentioned in \cite{peng2019convergence, liu2020distributed}, the staleness is usually roughly equal to the number of processors, if all the processors have similar computing ability.

The next theorem gives a generic result for the case with delay.
\begin{theorem}\label{thm:rate-cvx-general}
Let $\{\vx^{(k)}\}$ be generated from Algorithm~\ref{alg:async-adp-sgm}. Under Assumptions~\ref{assump:cvx}, \ref{assump:bound-grad}, \ref{assump:unbiased} and \ref{smoothness}, we have that for any $\vx\in X$,
\begin{align}\label{eq:general-rate-cvx}
&(1-\beta_1)\sum_{k=1}^t\left(\sum_{j=k}^t\alpha_j\beta_1^{j-k}\right)\EE\big[F(\vx^{(k)})-F(\vx)\big] \\
\le &\frac{D_\infty^2}{2}\EE\|\sqrt{\widehat\vv^{(t)}}\|_1 + \frac{1}{2(1-\beta_1)^2}\frac{G_1}{\sqrt{1-\beta_2}}\sum_{k=1}^t\alpha_k^2 
+\frac{L(1-\beta_1)}{2}\sum_{k=1}^t\left(\sum_{j=k}^t\alpha_j\beta_1^{j-k}\right)\EE \|\vx^{(k)} - \widehat \vx^{(k)}\|^2.\nonumber
\end{align}
\end{theorem}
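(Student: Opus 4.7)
The plan is to start from Lemma~\ref{lem:ineq-from-opt}, take expectation, apply Lemma~\ref{lem:bd-m-v} to the $\|\vm^{(k)}\|_{(\widehat\vv^{(k)})^{-1/2}}^2$ terms, and then convert the linear term $\EE\langle \vx^{(k)}-\vx,\vg^{(k)}\rangle$ into the function-value gap $\EE[F(\vx^{(k)})-F(\vx)]$, paying a residual that accounts for the mismatch between $\vx^{(k)}$ and the staled point $\widehat\vx^{(k)}$ at which $\vg^{(k)}$ is actually evaluated.

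First I would reproduce the delay-free chain: taking expectation in \eqref{eq:bd-crs-term} and bounding $\EE\|\vm^{(k)}\|_{(\widehat\vv^{(k)})^{-1/2}}^2 \le G_1/\sqrt{1-\beta_2}$ by Lemma~\ref{lem:bd-m-v} yields
\begin{align*}
(1-\beta_1)\sum_{k=1}^t \Bigl(\sum_{j=k}^t\alpha_j\beta_1^{j-k}\Bigr)\EE\langle \vx^{(k)}-\vx,\vg^{(k)}\rangle
\le \frac{D_\infty^2}{2}\EE\|\sqrt{\widehat\vv^{(t)}}\|_1 + \frac{G_1}{2(1-\beta_1)^2\sqrt{1-\beta_2}}\sum_{k=1}^t\alpha_k^2.
\end{align*}
This part is identical to the no-delay argument. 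Next, because $\widehat\vx^{(k)}$ is built from $\{\vx^{(k-\tau_k)},\ldots,\vx^{(k)}\}$ and hence $\cH_k$-measurable, the tower property together with Assumption~\ref{assump:unbiased} (combined with Assumption~\ref{smoothness}, which forces $\partial F(\widehat\vx^{(k)})=\{\nabla F(\widehat\vx^{(k)})\}$) gives
\begin{align*}
\EE\langle \vx^{(k)}-\vx,\vg^{(k)}\rangle = \EE\langle \vx^{(k)}-\vx, \nabla F(\widehat\vx^{(k)})\rangle.
\end{align*}

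The heart of the delayed analysis is to turn this inner product into $\EE[F(\vx^{(k)})-F(\vx)]$. I would split
\begin{align*}
F(\vx^{(k)}) - F(\vx) = \bigl(F(\vx^{(k)}) - F(\widehat\vx^{(k)})\bigr) + \bigl(F(\widehat\vx^{(k)}) - F(\vx)\bigr),
\end{align*}
bound the second summand by convexity, $F(\widehat\vx^{(k)}) - F(\vx) \le \langle \widehat\vx^{(k)}-\vx,\nabla F(\widehat\vx^{(k)})\rangle$, and bound the first by the descent inequality from $L$-smoothness, $F(\vx^{(k)}) - F(\widehat\vx^{(k)}) \le \langle \vx^{(k)}-\widehat\vx^{(k)},\nabla F(\widehat\vx^{(k)})\rangle + \tfrac{L}{2}\|\vx^{(k)}-\widehat\vx^{(k)}\|^2$. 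Adding the two and taking expectation gives
\begin{align*}
\EE[F(\vx^{(k)})-F(\vx)] \le \EE\langle \vx^{(k)}-\vx,\vg^{(k)}\rangle + \frac{L}{2}\EE\|\vx^{(k)}-\widehat\vx^{(k)}\|^2.
\end{align*}
Substituting this into the displayed inequality from step one, with the weight $(1-\beta_1)\sum_{j=k}^t\alpha_j\beta_1^{j-k}$, and moving the residual term to the right-hand side produces exactly \eqref{eq:general-rate-cvx}.

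The only nontrivial step is the coupling of $\vg^{(k)}$, effectively a gradient at $\widehat\vx^{(k)}$, to a function-value inequality anchored at $\vx^{(k)}$; convexity alone cannot do this because the anchor of $\nabla F$ differs from the point where we want to evaluate $F$, so smoothness must be invoked to pick up the $\tfrac{L}{2}\|\vx^{(k)}-\widehat\vx^{(k)}\|^2$ residual. This residual is exactly the extra term in \eqref{eq:general-rate-cvx}, and in a subsequent step (outside this theorem) it is controlled using \eqref{eq:x-hat1}--\eqref{eq:x-hat2} together with the bounded-staleness Assumption~\ref{assump:bound-tau}; here, however, no assumption on $\tau_k$ is needed and the remainder of the argument is purely bookkeeping.
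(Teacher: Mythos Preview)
Your proposal is correct and follows essentially the same approach as the paper's proof: both combine the expected version of Lemma~\ref{lem:ineq-from-opt} with Lemma~\ref{lem:bd-m-v} (yielding \eqref{eq:ineq-cvx-case}), then split $\langle \vx^{(k)}-\vx,\nabla F(\widehat\vx^{(k)})\rangle$ into a convexity piece at $\widehat\vx^{(k)}$ and an $L$-smoothness descent piece linking $\vx^{(k)}$ to $\widehat\vx^{(k)}$, producing the extra $\tfrac{L}{2}\|\vx^{(k)}-\widehat\vx^{(k)}\|^2$ term. The only cosmetic difference is that you phrase the two ingredients as upper bounds on function differences while the paper writes them as lower bounds on inner products; the resulting inequality and its substitution into \eqref{eq:ineq-cvx-case} are identical.
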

\begin{proof}
It follows from the convexity of $F$ that  
$$\big\langle \widehat \vx^{(k)} - \vx, \nabla F(\widehat \vx^{(k)})\big\rangle \ge F(\widehat \vx^{(k)})-F(\vx),$$
and in addition, the $L$-smoothness of $F$ implies   
$$\big\langle \vx^{(k)} - \widehat \vx^{(k)}, \nabla F(\widehat \vx^{(k)})\big\rangle \ge F(\vx^{(k)}) - F(\widehat \vx^{(k)}) - \frac{L}{2}\|\vx^{(k)} - \widehat \vx^{(k)}\|^2.$$
Plugging the above two inequalities into \eqref{eq:prob-crs-term}, we have
$$\EE\big\langle \vx^{(k)} - \vx, \vg^{(k)} \big\rangle \ge \EE\big[F(\vx^{(k)}) - F(\vx)\big] -\frac{L}{2}\EE \|\vx^{(k)} - \widehat \vx^{(k)}\|^2,$$
which together with \eqref{eq:ineq-cvx-case} gives the desired result in \eqref{eq:general-rate-cvx}.
\end{proof}

Note that if $\vx^{(k)} = \widehat \vx^{(k)},\forall\,k$, the inequality in \eqref{eq:general-rate-cvx} reduces to that in \eqref{eq:fun-ineq-cvx-case}. However, due to the asynchrony, we generally only have the relation in \eqref{eq:shared}. Therefore, the term about $\|\vx^{(k)} - \widehat \vx^{(k)}\|^2$ in \eqref{eq:general-rate-cvx} is not zero, and we need to bound it appropriately in order to establish the sublinear convergence. From \eqref{eq:x-hat2}, it suffices to bound $\|\vx^{(k+1)} - \vx^{(k)}\|^2$ for all $k$, which can be obtained by the following lemmas. The proofs of the lemmas are given in the appendix.

\begin{lemma}[non-expansiveness]\label{lem:nonexpansive}
Suppose $X=[a_1,b_1]\times\cdots \times [a_n,b_n]$ for some finite numbers $\{a_i\}$ and $\{b_i\}$. Let $\{\vx^{(k)}\}$ be generated from Algorithm~\ref{alg:async-adp-sgm}, and for any $i\in[n]$, we let $x_i^{(k+1)}=x_i^{(k)}$, if $\widehat v_i^{(k)}=0$. Then for any $k\ge1$, it holds
\begin{equation}\label{eq:diff-xk-1}
\|\vx^{(k+1)} - \vx^{(k)}\| \le \alpha_{k} \big\|\vm^{(k)}\oslash\sqrt{\widehat\vv^{(k)}}\big\|.
\end{equation}
\end{lemma}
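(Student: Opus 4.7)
\textbf{Proof plan for Lemma \ref{lem:nonexpansive}.} The strategy is to exploit the fact that both the box constraint $X=[a_1,b_1]\times\cdots\times[a_n,b_n]$ and the weighted proximal term are separable across coordinates, so the subproblem in \eqref{eq:update-x} decouples into $n$ one-dimensional problems that admit closed-form solutions. Once each coordinate update is written as a projection onto $[a_i,b_i]$, the required inequality will follow from the non-expansiveness of the one-dimensional projection.

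First, using the diagonal structure of the weighted norm, I would rewrite \eqref{eq:update-x} coordinate-wise: for each $i\in[n]$,
\begin{equation*}
x_i^{(k+1)} \in \argmin_{x_i\in [a_i,b_i]}\; m_i^{(k)} x_i + \frac{\sqrt{\widehat v_i^{(k)}}}{2\alpha_k}\bigl(x_i-x_i^{(k)}\bigr)^2.
\end{equation*}
When $\widehat v_i^{(k)}>0$, this one-dimensional strongly convex quadratic has the unique minimizer
\begin{equation*}
x_i^{(k+1)} = \Proj_{[a_i,b_i]}\!\left(x_i^{(k)} - \frac{\alpha_k\, m_i^{(k)}}{\sqrt{\widehat v_i^{(k)}}}\right).
\end{equation*}
When $\widehat v_i^{(k)}=0$, the subproblem degenerates (the quadratic becomes linear in $x_i$), and by the hypothesis of the lemma we simply set $x_i^{(k+1)}=x_i^{(k)}$; since in that case the monotone construction in \eqref{eq:update-v}--\eqref{eq:update-vhat} forces every past gradient coordinate $g_i^{(j)}$ to be zero, we also have $m_i^{(k)}=0$, so the convention $0/0=0$ makes $m_i^{(k)}/\sqrt{\widehat v_i^{(k)}}=0$ consistent with this choice.

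Next I would apply the non-expansiveness of the Euclidean projection onto $[a_i,b_i]$. Since $x_i^{(k)}\in [a_i,b_i]$ implies $\Proj_{[a_i,b_i]}(x_i^{(k)}) = x_i^{(k)}$, for every coordinate with $\widehat v_i^{(k)}>0$ one obtains
\begin{equation*}
\bigl|x_i^{(k+1)} - x_i^{(k)}\bigr|
 = \left| \Proj_{[a_i,b_i]}\!\left(x_i^{(k)} - \tfrac{\alpha_k m_i^{(k)}}{\sqrt{\widehat v_i^{(k)}}}\right) - \Proj_{[a_i,b_i]}\!\left(x_i^{(k)}\right) \right|
 \le \frac{\alpha_k\,|m_i^{(k)}|}{\sqrt{\widehat v_i^{(k)}}},
\end{equation*}
and the inequality is trivially an equality (both sides zero) when $\widehat v_i^{(k)}=0$.

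Finally, squaring and summing over $i\in[n]$ yields
\begin{equation*}
\|\vx^{(k+1)}-\vx^{(k)}\|^2 \le \alpha_k^2 \sum_{i=1}^n \frac{\bigl(m_i^{(k)}\bigr)^2}{\widehat v_i^{(k)}} = \alpha_k^2\,\bigl\|\vm^{(k)}\oslash\sqrt{\widehat\vv^{(k)}}\bigr\|^2,
\end{equation*}
and taking square roots gives \eqref{eq:diff-xk-1}. No step in this argument looks like a genuine obstacle; the only subtlety is bookkeeping around the degenerate coordinates $\widehat v_i^{(k)}=0$, which is precisely why the convention $x_i^{(k+1)}=x_i^{(k)}$ is built into the statement of the lemma.
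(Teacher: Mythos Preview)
Your proof is correct and follows essentially the same approach as the paper: both exploit separability of the box $X$ and the diagonal weight to recognize the update as the Euclidean projection $\vx^{(k+1)}=\Proj_{X}\big(\vx^{(k)}-\alpha_k\,\vm^{(k)}\oslash\sqrt{\widehat\vv^{(k)}}\big)$, and then invoke non-expansiveness of the projection together with $\vx^{(k)}=\Proj_X(\vx^{(k)})$. The only difference is cosmetic: the paper states the projection identity directly at the vector level, whereas you spell it out coordinate by coordinate.
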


\begin{remark}
Notice that when $\widehat v_i^{(k)}=0$, we must have $m_i^{(k)}=0$ and in this case, $x_i$ does not affect the objective of \eqref{eq:update-x}. Hence, when $X$ is separable and $\widehat v_i^{(k)}=0$, $x_i^{(k+1)}=x_i^{(k)}$ is one optimal choice for $x_i$, and thus such a setting  will not affect the optimality condition of \eqref{eq:update-x}.
\end{remark}

\begin{lemma}\label{lem:bd-mg-v}
Let $\{(\vg^{(k)}, \vm^{(k)}, \widehat\vv^{(k)})\}$ be generated from Algorithm~\ref{alg:async-adp-sgm}.
It holds for any $k\ge 1$ that
\begin{subequations}\label{eq:bd-m-divide-v}
\begin{align}
&\|\vg^{(j)}\oslash\sqrt{\widehat\vv^{(k)}}\|\le \frac{\sqrt{\|\vg^{(j)}\|_0}}{\sqrt{1-\beta_2}}, \forall j\le k, \label{eq:bd-m-divide-v-1}\\
&\|\vm^{(k)}\oslash\sqrt{\widehat\vv^{(k)}}\|\le  \sum_{j=1}^k(1-\beta_1)\beta_1^{k-j} \frac{\sqrt{\|\vg^{(j)}\|_0}}{\sqrt{1-\beta_2}}, \label{eq:bd-m-divide-v-2}\\
&\|\vm^{(k)}\oslash\sqrt{\widehat\vv^{(k)}}\|^2\le\frac{1-\beta_1}{1-\beta_2} \sum_{j=1}^k \beta_1^{k-j} \|\vg^{(j)}\|_0. \label{eq:bd-m-divide-v-3}
\end{align}
\end{subequations}
\end{lemma}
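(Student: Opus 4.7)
The plan is to prove the three bounds in order, using the structure of AMSGrad's updates and a careful coordinate-wise comparison.

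For part \eqref{eq:bd-m-divide-v-1}, I would exploit the defining recursion $\vv^{(j)}=\beta_2\vv^{(j-1)}+(1-\beta_2)(\vg^{(j)})^2$ to conclude coordinate-wise that $v_i^{(j)}\ge (1-\beta_2)(g_i^{(j)})^2$. Since $\widehat\vv^{(k)}=\max\{\widehat\vv^{(k-1)},\vv^{(k)}\}$ is non-decreasing and dominates $\vv^{(k)}$, for any $j\le k$ we get $\widehat v_i^{(k)}\ge v_i^{(j)}\ge (1-\beta_2)(g_i^{(j)})^2$. Hence for every coordinate $i$ with $g_i^{(j)}\ne 0$, $(g_i^{(j)})^2/\widehat v_i^{(k)}\le 1/(1-\beta_2)$, while coordinates with $g_i^{(j)}=0$ contribute zero (using the convention $0/0=0$). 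Summing over only the $\|\vg^{(j)}\|_0$ non-zero coordinates gives $\|\vg^{(j)}\oslash\sqrt{\widehat\vv^{(k)}}\|^2\le \|\vg^{(j)}\|_0/(1-\beta_2)$, and taking square roots yields \eqref{eq:bd-m-divide-v-1}.

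For part \eqref{eq:bd-m-divide-v-2}, I would unroll the recursion $\vm^{(k)}=\beta_1\vm^{(k-1)}+(1-\beta_1)\vg^{(k)}$ with $\vm^{(0)}=\vzero$ to write $\vm^{(k)}=\sum_{j=1}^k(1-\beta_1)\beta_1^{k-j}\vg^{(j)}$. Applying the triangle inequality and then part \eqref{eq:bd-m-divide-v-1} termwise gives the claim directly.

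For part \eqref{eq:bd-m-divide-v-3}, the natural approach is a Cauchy--Schwarz (or Jensen) argument with weights $a_j:=(1-\beta_1)\beta_1^{k-j}$. Using the unrolled expression from the previous step coordinate-wise,
\[
\Big(\sum_{j=1}^k a_j\,g_i^{(j)}/\sqrt{\widehat v_i^{(k)}}\Big)^2
\le \Big(\sum_{j=1}^k a_j\Big)\sum_{j=1}^k a_j\,(g_i^{(j)})^2/\widehat v_i^{(k)}.
\]
Summing over $i$, using $\sum_{j=1}^k a_j=1-\beta_1^k\le 1$, and applying part \eqref{eq:bd-m-divide-v-1} to bound $\|\vg^{(j)}\oslash\sqrt{\widehat\vv^{(k)}}\|^2\le \|\vg^{(j)}\|_0/(1-\beta_2)$, I obtain $\|\vm^{(k)}\oslash\sqrt{\widehat\vv^{(k)}}\|^2\le \frac{1-\beta_1}{1-\beta_2}\sum_{j=1}^k\beta_1^{k-j}\|\vg^{(j)}\|_0$, which is exactly \eqref{eq:bd-m-divide-v-3}.

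The only real subtlety is handling the $0/0$ convention cleanly: some coordinates of $\widehat\vv^{(k)}$ may be zero, in which case the corresponding coordinates of all $\vg^{(j)}$ with $j\le k$ must also be zero (by the recursion for $\vv$), so those terms truly vanish and the bounds with $\|\vg^{(j)}\|_0$ are well-defined. Beyond that bookkeeping step, each inequality follows from a direct recursive unrolling plus either the triangle inequality or Cauchy--Schwarz; no deeper tool is required.
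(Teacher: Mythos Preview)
Your proposal is correct and follows essentially the same approach as the paper: the paper also derives $\widehat v_i^{(k)}\ge(1-\beta_2)(g_i^{(j)})^2$ (via the auxiliary quantity $G_i^{(k)}=\max_{j\le k}|g_i^{(j)}|$, whereas you go directly through $\widehat v_i^{(k)}\ge v_i^{(j)}$), then unrolls $\vm^{(k)}$ and applies the triangle inequality for \eqref{eq:bd-m-divide-v-2}, and finally obtains \eqref{eq:bd-m-divide-v-3} by Cauchy--Schwarz on the weighted sum. The only difference is cosmetic.
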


Applying the results in the above two lemmas to the inequality in \eqref{eq:general-rate-cvx}, we establish the sublinear convergence result of  Algorithm~\ref{alg:async-adp-sgm} as follows.

\begin{theorem}[convex case with delay]\label{thm:speed-up} 
Suppose Assumptions~\ref{assump:cvx} through \ref{assump:bound-tau} hold. Assume $X=[a_1,b_1]\times\cdots \times[a_n,b_n]$ for finite numbers $\{a_i\}$ and $\{b_i\}$. 
Let $\{\vx^{(k)}\}$ be generated from Algorithm~\ref{alg:async-adp-sgm}, and for any $i\in[n]$, we let $x_i^{(k+1)}=x_i^{(k)}$, if $\widehat v_i^{(k)}=0$. Given a positive integer $K$ and $\alpha>0$, let $\alpha_k= \frac{\alpha}{\sqrt K}$ for all $1\le k\le K$. Then 
{\begin{align}\label{eq:cvx-rate-const}
 \textstyle \EE\big[F(\bar\vx^{(K)}) - F^*\big] 
 \le \textstyle \frac{1}{2\alpha \sqrt{K}(1-\beta_1)}\left(n D_\infty^2 G_\infty +  \frac{\alpha^2G_1}{(1-\beta_1)^2\sqrt{1-\beta_2}} +\frac{\alpha^3L \tau^2  n }{\sqrt{K}(1-\beta_2)}\right),
 \end{align}}
where $\bar\vx^{(K)}$ is drawn from $\{\vx^{(k)}\}_{k=1}^K$ with
{
$$\textstyle \Prob(\bar\vx^{(K)}=\vx^{(k)})=\frac{\sum_{j=k}^K\alpha_j\beta_1^{j-k}}{\sum_{t=1}^K\left(\sum_{j=t}^K\alpha_j\beta_1^{j-t}\right)},\forall\,1\le k\le K.$$
}
\end{theorem}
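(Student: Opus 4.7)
\textbf{Proof plan for Theorem \ref{thm:speed-up}.} The starting point is the generic delay-aware bound \eqref{eq:general-rate-cvx} from Theorem \ref{thm:rate-cvx-general}. Compared with the no-delay analysis in Theorem \ref{thm:cvx-no-delay}, the only new term to control is
\[
R := \tfrac{L(1-\beta_1)}{2}\sum_{k=1}^{K}\Big(\sum_{j=k}^{K}\alpha_j\beta_1^{j-k}\Big)\,\EE\|\vx^{(k)}-\widehat\vx^{(k)}\|^{2}.
\]
So the plan is to (i) reduce $\EE\|\vx^{(k)}-\widehat\vx^{(k)}\|^{2}$ to successive iterate differences via the inconsistent-read bound \eqref{eq:x-hat2}, (ii) convert each $\|\vx^{(j+1)}-\vx^{(j)}\|^{2}$ into a quantity involving $\|\vm^{(j)}\oslash\sqrt{\widehat\vv^{(j)}}\|^2$ using the non-expansiveness Lemma \ref{lem:nonexpansive}, (iii) bound the latter by $\|\vg\|_0$-type quantities via Lemma \ref{lem:bd-mg-v} (eq.~\eqref{eq:bd-m-divide-v-3}), and (iv) take expectations to invoke $\EE\|\vg^{(j)}\|_0\le s$.

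Concretely, first apply \eqref{eq:x-hat2} and Assumption \ref{assump:bound-tau} to get $\|\vx^{(k)}-\widehat\vx^{(k)}\|^{2}\le \tau\sum_{l=0}^{\tau-1}\|\vx^{(k-l)}-\vx^{(k-l-1)}\|^{2}$. Next, by Lemma \ref{lem:nonexpansive} each summand is bounded by $\alpha_{k-l-1}^{2}\|\vm^{(k-l-1)}\oslash\sqrt{\widehat\vv^{(k-l-1)}}\|^{2}$, which by \eqref{eq:bd-m-divide-v-3} is at most $\tfrac{1-\beta_1}{1-\beta_2}\sum_{j\le k-l-1}\beta_1^{k-l-1-j}\|\vg^{(j)}\|_0$. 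Taking expectations and using $\EE\|\vg^{(j)}\|_0\le s$ together with $\sum_{j}\beta_1^{k-l-1-j}\le \tfrac{1}{1-\beta_1}$ yields $\EE\|\vm^{(k-l-1)}\oslash\sqrt{\widehat\vv^{(k-l-1)}}\|^{2}\le \tfrac{s}{1-\beta_2}$. With the constant stepsize $\alpha_k=\alpha/\sqrt{K}$ this collapses to
\[
\EE\|\vx^{(k)}-\widehat\vx^{(k)}\|^{2}\;\le\;\tau\cdot\tau\cdot\tfrac{\alpha^2}{K}\cdot\tfrac{s}{1-\beta_2}\;=\;\tfrac{\tau^{2}\alpha^{2}s}{K(1-\beta_2)},
\]
which is where the $\tau^{2}$ in the final bound originates.

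Now I plug this into $R$. Using $\sum_{j=k}^{K}\alpha_j\beta_1^{j-k}\le \tfrac{\alpha}{\sqrt{K}(1-\beta_1)}$, hence $\sum_{k=1}^{K}\sum_{j=k}^{K}\alpha_j\beta_1^{j-k}\le \tfrac{\alpha\sqrt{K}}{1-\beta_1}$, I obtain $R\le \tfrac{L\tau^{2}\alpha^{3}s}{2\sqrt{K}(1-\beta_2)}$. Substituting into \eqref{eq:general-rate-cvx} with $t=K$, taking $\vx=\vx^{*}$, using $\EE\|\sqrt{\widehat\vv^{(K)}}\|_1\le nG_\infty$ and $\sum_k\alpha_k^{2}=\alpha^{2}$, and dividing through by $\sum_{t=1}^{K}\sum_{j=t}^{K}\alpha_j\beta_1^{j-t}\ge \alpha\sqrt{K}$ delivers the stated bound on $\EE[F(\bar\vx^{(K)})-F^{*}]$ via convexity of $F$ (the randomized $\bar\vx^{(K)}$ serves the same averaging role as the deterministic convex combination in Theorem \ref{thm:cvx-no-delay}).

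\textbf{Main obstacle.} The routine algebra is the juggling of three nested sums (the delay window $l=0,\dots,\tau-1$, the moving-average index $j\le k-l-1$, and the outer $k$), and making sure the quadratic factor $\tau^{2}$ emerges correctly: one $\tau$ comes from the Cauchy-Schwarz style bound \eqref{eq:x-hat2}, and the second $\tau$ comes from summing $\tau$ terms of $\|\vx^{(k-l)}-\vx^{(k-l-1)}\|^{2}$. A related subtlety is that Lemma \ref{lem:nonexpansive} requires $X$ to be a box and the convention $x_i^{(k+1)}=x_i^{(k)}$ when $\widehat v_i^{(k)}=0$, which is precisely why these assumptions appear in the theorem hypothesis. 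Once these pieces are assembled, the only remaining step is to match coefficients with the no-delay terms from \eqref{eq:rate-cvx-no-delay}, and the $O(\tau^{2}/K)$ contribution of asynchrony naturally isolates itself as the third term in the stated rate, so that $\tau=o(K^{1/4})$ suffices for the delay correction to be dominated by the leading $O(1/\sqrt{K})$ term.
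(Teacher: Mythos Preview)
Your proposal is correct and mirrors the paper's own proof: start from the generic bound \eqref{eq:general-rate-cvx}, control the delay term via \eqref{eq:x-hat2}, Lemma~\ref{lem:nonexpansive}, and \eqref{eq:bd-m-divide-v-3} to get $\EE\|\vx^{(k)}-\widehat\vx^{(k)}\|^{2}\le \tau^{2}\alpha^{2}s/\big(K(1-\beta_2)\big)$, then plug back and divide by $\sum_{k}\sum_{j}\alpha_j\beta_1^{j-k}\ge \alpha\sqrt{K}$. The only cosmetic difference is that the paper leaves the $\alpha_{k-l}^{2}$'s inside the outer sum before collapsing, whereas you first bound $\EE\|\vx^{(k)}-\widehat\vx^{(k)}\|^{2}$ uniformly in $k$; with constant stepsizes the two routes coincide.
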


\begin{proof}
By \eqref{eq:diff-xk-1} and \eqref{eq:bd-m-divide-v-3}, we have 
 $\EE\|\vx^{(k-l+1)} - \vx^{(k-l)}\|^2 \le \frac{ n  \alpha_{k-l}^2}{1-\beta_2}$. Since $\tau_k\le \tau,\forall\, k\ge1$, it  follows from \eqref{eq:x-hat2} that
 $$\EE \|\vx^{(k)} - \widehat \vx^{(k)}\|^2 \le \tau \sum_{l=1}^\tau\EE \|\vx^{(k-l+1)} - \vx^{(k-l)}\|^2.$$
 In addition, notice that 
 $$\sum_{k=1}^K\left(\sum_{j=k}^K\alpha_j\beta_1^{j-k}\right) \left(\tau\sum_{l=1}^\tau \alpha_{k-l}^2\right) \le \frac{\tau^2 \alpha^3  } {(1-\beta_1)\sqrt{K}}.$$
 Therefore
 $$\frac{L(1-\beta_1)}{2}\sum_{k=1}^K\left(\sum_{j=k}^K\alpha_j\beta_1^{j-k}\right)\EE \|\vx^{(k)} - \widehat \vx^{(k)}\|^2\le \frac{\alpha^3L \tau^2  n }{2\sqrt{K}(1-\beta_2)}.$$

Plugging the above inequality into \eqref{eq:general-rate-cvx} with $t=K$ and using $\sum_{k=1}^K\alpha_k^2 = \alpha^2$ give
 \begin{align*}
&~(1-\beta_1)\sum_{k=1}^K\left(\sum_{j=k}^K\alpha_j\beta_1^{j-k}\right)\EE\big[F(\vx^{(k)})-F(\vx)\big] \\
\le &~\frac{D_\infty^2}{2}\EE\|\sqrt{\widehat\vv^{(t)}}\|_1 + \frac{\alpha^2}{2(1-\beta_1)^2}\frac{G_1}{\sqrt{1-\beta_2}}+\frac{\alpha^3L \tau^2  n }{2\sqrt{K}(1-\beta_2)}.
\end{align*} 
 Now using the definition of $\bar\vx^{(K)}$ and noting $\sum_{k=1}^K\left(\sum_{j=k}^K\alpha_j\beta_1^{j-k}\right)\ge \alpha\sqrt{K}$, we obtain the result in \eqref{eq:cvx-rate-const} and complete the proof.
 \end{proof}

\begin{remark}[How delay affects convergence speed]\label{rm:rate-cvx}
Take $\alpha=O(1)$. 
Then 
\eqref{eq:cvx-rate-const} implies that the effect by the delay decreases at the rate of $
K^{\frac{1}{4}}$, namely, we can achieve nearly-linear speed-up if $\tau=o(
K^{\frac{1}{4}})$.  
\cite{peng2019convergence} shows that the delay, in expectation, equals the number of processors if all of them have the same computing power. Hence, in the ideal case, we can expect nearly-linear speed-up by using $o(K^{\frac{1}{4}})$ processors. 
However, notice that the convergence result of the asynchronous case requires stronger assumptions than that of the synchronous counterpart. 
In addition, 
 we set $\alpha_k= \frac{\alpha}{\sqrt K}$ for all $1\le k\le K$ in Theorem~\ref{thm:speed-up} for simplicity. A sublinear convergence result can also be established if $\alpha_k=\frac{\alpha}{\sqrt k}, \forall\, k$ by similar arguments as those in the proof of Theorem~\ref{thm:cvx-no-delay}.
\end{remark}

\section{Convergence for non-convex problems}\label{sec:ncvx}
\newcommand{\vGam}{{\mathbf{\Gamma}}}
\newcommand{\vPhi}{{\mathbf{\Phi}}}
\newtheorem{fact}{Fact}

In this section, we analyze Algorithm~\ref{alg:async-adp-sgm} for non-convex problems under Assumptions~\ref{assump:bound-grad2}--\ref{assump:bound-tau}. 
Due to the difficulty caused by nonconvexity, we assume $X=\RR^n$. Then the update in \eqref{eq:update-x} becomes 
\begin{equation}\label{eq:update-x-unc}
\textstyle \vx^{(k+1)}= \vx^{(k)} - \alpha_k \vm^{(k)}\oslash \sqrt{\widehat\vv^{(k)}}.
\end{equation}
When $X=\RR^n$, the gradient boundedness condition in Assumption~\ref{assump:bound-grad2} may not hold for deep learning problems. However, we are unable to relax this strong assumption. It is also made in all existing works that analyze adaptive SGMs for non-convex problems, e.g., \cite{zhou2018convergence, chen2018convergence}. On analyzing nonadaptive SGMs for non-convex problems, this assumption can be relaxed to a variance boundedness condition \cite{lian2015asynchronous, ghadimi2013stochastic}, which may not hold either for unconstrained deep learning problems but is weaker than the gradient boundedness condition.

Given a maximum number $K$ of iterations, we will assume, without loss of generality, $\widehat{v}^{(K)}_{i}> 0$ for all $i\in [n]$. Note that if $\widehat{v}^{(K)}_{i} = 0$ for some $i$, then $g_i^{(k)}=0$ for all $k\le K$, and in this case, $x_i$ never changes and can be simply viewed as a constant instead of a variable. 
We define an auxiliary sequence $\{ \widetilde{\vv}^{(k)} \}_{k=1}^{K}$ and gradient bounds as follows. These are only used in our analysis but not in the computation. 
\begin{definition}\label{def:Delta}
Given a positive integer $K$, let $\{\widehat\vv^{(k)}\}_{k=1}^{K}$ be computed from Algorithm~\ref{alg:async-adp-sgm}. For any $i\in [n]$, suppose $k_i\le K$ is the smallest number such that $\widehat{v}^{(k_i)}_{i}> 0$. Define $\{ \widetilde{\vv}^{(k)} \}_{k=1}^{K}$ as $\widetilde{v}^{(k)}_{i}= \max\{\widehat{v}^{(k)}_{i}, \, \widehat{v}^{(k_i)}_{i}\}$ for all $i\in [n]$ and all $k\in [K]$. Denote $\widetilde{\vV}^{(k)} = \Diag(\widetilde{\vv}^{(k)})$ for all $k\in [K].$
\end{definition}



\begin{definition}\label{def:fgbound}
Given a positive integer $K$, let $\{\vg^{(k)}\}_{k=1}^{K}$ and $\{ \vx^{(k)} \}_{k=1}^{K}$ be computed as in Algorithm~\ref{alg:async-adp-sgm}. We define $(\vGam(K),\vPhi(K))$ as: 
\beq\label{eq:fgbound}
\Gamma_i(K)= \max_{1\le k \le K}|g^{(k)}_i|,  \text{ and } \Phi_i(K) = \max_{1\le k \le K}|\nabla_i F(\vx^{(k)})|,  \ \forall\, i\in[n].
\eeq
We abbreviate the pair as $(\vGam,\vPhi)$ to hide the dependence on $K$, when it is clear from the context. 
\end{definition}

\begin{remark}\label{rm:vtilde}
We make a few remarks on $\{ \widetilde{\vv}^{(k)} \}_{k=1}^{K}$. (I) Assume $\widehat{v}^{(K)}_{i}> 0$ for all $i\in [n]$. Then each $\widetilde \vv^{(k)}$ is a positive vector, and $\widetilde \vv^{(k)}\ge \widetilde \vv^{(k-1)}$ still holds component-wisely; (II) $\vm^{(k)}\oslash\sqrt{\widetilde{\vv}^{(k)}}= \vm^{(k)}\oslash\sqrt{\widehat{\vv}^{(k)}}$ and $\vg^{(k)}\oslash\sqrt{\widetilde{\vv}^{(k)}}=\vg^{(k)}\oslash\sqrt{\widehat{\vv}^{(k)}}$ for all $k\le K$;  
and (III) $\widetilde{\vv}^{(K)} = \widehat{\vv}^{(K)}$ under the assumption $\widehat{\vv}^{(K)}>\vzero$. 
\end{remark}

\subsection{Preparatory lemmas}
In this subsection, we establish several lemmas. Their proofs are given in the appendix. The next lemma gives bounds on $\Gamma_i$ and $\Phi_i$ under Assumption~\ref{assump:bound-grad2}.
\begin{lemma}
\label{lem:indbnd}
Given a positive integer $K$, let $\{\widehat\vv^{(k)}\}_{k=1}^{K}$ and $\{\vm^{(k)}\}_{k=1}^{K}$ be generated from Algorithm~\ref{alg:async-adp-sgm}, and let $(\vGam,\vPhi)$ be given in Definition~\ref{def:fgbound}. We have for all $i\in[n],$ $| m^{(k)}_i|\leq  \Gamma_i,$ and 
$\widehat v^{(k)}_i \leq  \Gamma_i^2.$ 
Moreover, if Assumption~\ref{assump:bound-grad2} holds, then $\Gamma_i\le G_\infty$ almost surely, and $\Phi_i\le G_\infty $  for all $i\in[n].$
\end{lemma}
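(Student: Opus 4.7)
The plan is to unroll the two recursions for $\vm^{(k)}$ and $\vv^{(k)}$ so that each iterate is written as a convex-combination--like sum of past (squared) stochastic gradients, and then bound coordinate-wise by the definitions of $\Gamma_i$ and $\Phi_i$. Specifically, since $\vm^{(0)}=\vzero$, iterating \eqref{eq:update-m} yields
\begin{equation*}
\vm^{(k)} \;=\; (1-\beta_1)\sum_{j=1}^{k}\beta_1^{k-j}\vg^{(j)},
\end{equation*}
and since $\vv^{(0)}=\vzero$, iterating \eqref{eq:update-v} yields
\begin{equation*}
\vv^{(k)} \;=\; (1-\beta_2)\sum_{j=1}^{k}\beta_2^{k-j}\bigl(\vg^{(j)}\bigr)^{2}.
\end{equation*}
Taking the $i$-th coordinate, using $|g_i^{(j)}|\le \Gamma_i$ for all $j\le k$ from Definition~\ref{def:fgbound}, and summing the geometric series $(1-\beta_1)\sum_{j=1}^{k}\beta_1^{k-j}=1-\beta_1^{k}\le 1$ immediately give $|m_i^{(k)}|\le \Gamma_i$. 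The same argument, with squares, gives $v_i^{(k)}\le \Gamma_i^2$. The bound then transfers to $\widehat\vv^{(k)}$ because by \eqref{eq:update-vhat} we have $\widehat v_i^{(k)}=\max_{1\le j\le k} v_i^{(j)}$ (combined with the initial value $\widehat v_i^{(0)}=0$), and a maximum of numbers all bounded by $\Gamma_i^2$ is itself bounded by $\Gamma_i^2$.

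For the second part, I would invoke Assumption~\ref{assump:bound-grad}. Recall from \eqref{eq:vg-k} that $\vg^{(k)}=\tfrac{1}{b_k}\sum_{i=1}^{b_k}\tilde\nabla f(\widehat\vx^{(k)};\xi_i^{(k)})$, so by the triangle inequality and the almost-sure bound $\|\tilde\nabla f(\vx;\xi)\|_\infty\le G_\infty$,
\begin{equation*}
|g_i^{(k)}| \;\le\; \tfrac{1}{b_k}\sum_{\ell=1}^{b_k}\bigl|\tilde\nabla_i f(\widehat\vx^{(k)};\xi_\ell^{(k)})\bigr| \;\le\; G_\infty \quad\text{a.s.},
\end{equation*}
so that $\Gamma_i=\max_{1\le k\le K}|g_i^{(k)}|\le G_\infty$ almost surely. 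For $\Phi_i$, I use that under the smoothness Assumption~\ref{smoothness} (invoked in Section~\ref{sec:ncvx}), $F$ is differentiable with $\nabla F(\vx)=\EE_\xi[\nabla f(\vx;\xi)]$; then by Jensen's inequality
\begin{equation*}
|\nabla_i F(\vx^{(k)})| \;\le\; \EE_\xi\bigl|\nabla_i f(\vx^{(k)};\xi)\bigr| \;\le\; \EE_\xi\|\nabla f(\vx^{(k)};\xi)\|_\infty \;\le\; G_\infty,
\end{equation*}
so $\Phi_i\le G_\infty$ as claimed.

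\textbf{Expected difficulty.} None of the steps is really hard; this is essentially a bookkeeping lemma. The only mildly delicate point is being explicit that $\widehat v_i^{(k)}$ equals the running maximum of the $v_i^{(j)}$'s (using the zero initialization), and that the mini-batch averaging in \eqref{eq:vg-k} does not weaken the a.s.\ component-wise bound on $\vg^{(k)}$. These are handled by the triangle inequality and the monotonicity of the max operation, respectively.
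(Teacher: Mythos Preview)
Your proposal is correct and follows essentially the same approach as the paper: unroll the recursions \eqref{eq:update-m}--\eqref{eq:update-v} (the paper cites \eqref{eq:vm-k}), bound each coordinate by $\Gamma_i$ via the triangle inequality and the geometric-series sum $(1-\beta_1)\sum_{j=1}^k\beta_1^{k-j}=1-\beta_1^k\le 1$, and then invoke Assumption~\ref{assump:bound-grad} for the bounds on $\Gamma_i$ and $\Phi_i$. Your version is simply more explicit about the mini-batch averaging in \eqref{eq:vg-k} and about $\widehat v_i^{(k)}=\max_{j\le k}v_i^{(j)}$, points the paper leaves implicit.
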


To analyze Algorithm~\ref{alg:async-adp-sgm} for non-convex problems, we follow the analytical framework of \cite{yang2016unified}. Let $\vx^{(0)}=\vx^{(1)}$, and we define an auxiliary sequence $\vz^{(k)}$ as follows: 
\begin{equation}\label{defzk}
\vz^{(k)}=\vx^{(k)}+\frac{\beta_1}{1-\beta_1}(\vx^{(k)}-\vx^{(k-1)})=\frac{1}{1-\beta_1}\vx^{(k)}-\frac{\beta_1}{1-\beta_1}\vx^{(k-1)},\, \forall\, k\ge1.
\end{equation}

The following lemma is from Lemma A.3 of \cite{zhou2018convergence}. It shows that $\vz^{(k+1)}-\vz^{(k)}$ can be represented in two different ways. However, due to typos in the original proof, we provide a complete proof in the appendix for the convenience of the readers.
\begin{lemma}\label{lem:zz}
Let $\vz^{(k)}$ be defined as in \eqref{defzk} and $\widetilde\vV^{(k)}$ in Definition~\ref{def:Delta}. We have 
\begin{equation}\label{zzatone}
\vz^{(2)}-\vz^{(1)}=-\alpha_1(\widetilde\vV^{(1)})^{-\frac{1}{2}}\vg^{(1)},
\end{equation}
and for $k= 2,\ldots,K$,
\begin{align}\label{zzandm}
\vz^{(k+1)}-\vz^{(k)}=&~\frac{\beta_1}{1-\beta_1}\left[\alpha_{k-1}(\widetilde\vV^{(k-1)})^{-\frac{1}{2}}-\alpha_k(\widetilde\vV^{(k)})^{-\frac{1}{2}}\right]\vm^{(k-1)}-\alpha_k(\widetilde\vV^{(k)})^{-\frac{1}{2}}\vg^{(k)},\\
=&~\frac{\beta_1}{1-\beta_1}\left[\vI - \alpha_k(\widetilde\vV^{(k)})^{-\frac{1}{2}}  \alpha_{k-1}^{-1}(\widetilde\vV^{(k-1)})^{\frac{1}{2}}  \right](\vx^{(k-1)}-\vx^{(k)})-\alpha_k(\widetilde\vV^{(k)})^{-\frac{1}{2}}\vg^{(k)}.\label{zzandxx}
\end{align} 
\end{lemma}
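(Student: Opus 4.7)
The plan is to prove both identities by direct algebraic manipulation using the definition of $\vz^{(k)}$, the update rule \eqref{eq:update-x-unc}, and the recursion \eqref{eq:update-m} for $\vm^{(k)}$, while carefully using Remark~\ref{rm:vtilde}(II) to interchange $\widehat\vV^{(k)}$ and $\widetilde\vV^{(k)}$.

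For the base case \eqref{zzatone}, I would first observe that since $\vx^{(0)}=\vx^{(1)}$, the definition \eqref{defzk} gives $\vz^{(1)}=\vx^{(1)}$ and $\vz^{(2)}=\frac{1}{1-\beta_1}\vx^{(2)}-\frac{\beta_1}{1-\beta_1}\vx^{(1)}$, so that $\vz^{(2)}-\vz^{(1)}=\frac{1}{1-\beta_1}(\vx^{(2)}-\vx^{(1)})$. Next, since $\vm^{(0)}=\vzero$, the recursion gives $\vm^{(1)}=(1-\beta_1)\vg^{(1)}$; plugging \eqref{eq:update-x-unc} for $k=1$ and using $\vm^{(1)}\oslash\sqrt{\widehat\vv^{(1)}}=\vm^{(1)}\oslash\sqrt{\widetilde\vv^{(1)}}$ (Remark~\ref{rm:vtilde}(II)) yields \eqref{zzatone} immediately.

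For $k\ge 2$ and the identity \eqref{zzandm}, I would start from the telescoped expression
\begin{equation*}
\vz^{(k+1)}-\vz^{(k)}=\tfrac{1}{1-\beta_1}(\vx^{(k+1)}-\vx^{(k)})-\tfrac{\beta_1}{1-\beta_1}(\vx^{(k)}-\vx^{(k-1)}),
\end{equation*}
which follows from \eqref{defzk}. Then I would substitute the update $\vx^{(k+1)}-\vx^{(k)}=-\alpha_k(\widetilde\vV^{(k)})^{-1/2}\vm^{(k)}$ and $\vx^{(k)}-\vx^{(k-1)}=-\alpha_{k-1}(\widetilde\vV^{(k-1)})^{-1/2}\vm^{(k-1)}$, expand $\vm^{(k)}=\beta_1\vm^{(k-1)}+(1-\beta_1)\vg^{(k)}$, and collect the coefficients of $\vm^{(k-1)}$ and $\vg^{(k)}$. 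The $(1-\beta_1)$ factors cancel cleanly, leaving exactly \eqref{zzandm}.

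To obtain \eqref{zzandxx}, I would invert the update $\vx^{(k)}-\vx^{(k-1)}=-\alpha_{k-1}(\widetilde\vV^{(k-1)})^{-1/2}\vm^{(k-1)}$ to write $\vm^{(k-1)}=\alpha_{k-1}^{-1}(\widetilde\vV^{(k-1)})^{1/2}(\vx^{(k-1)}-\vx^{(k)})$, then substitute this into \eqref{zzandm} and factor out $(\vx^{(k-1)}-\vx^{(k)})$ to produce the bracketed expression $\bigl[\vI-\alpha_k(\widetilde\vV^{(k)})^{-1/2}\alpha_{k-1}^{-1}(\widetilde\vV^{(k-1)})^{1/2}\bigr]$. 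The inversion is licit because $\widetilde\vv^{(k-1)}>\vzero$ componentwise by Remark~\ref{rm:vtilde}(I), so $(\widetilde\vV^{(k-1)})^{1/2}$ and its inverse are well defined. There is no real obstacle here; the most delicate point is simply bookkeeping the swap between $\widehat\vV^{(k)}$ (appearing in the algorithm) and $\widetilde\vV^{(k)}$ (appearing in the statement), which is justified coordinatewise by Remark~\ref{rm:vtilde}(II). This justifies why the original proof in \cite{zhou2018convergence} only required correction of typos rather than a new idea.
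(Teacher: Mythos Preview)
Your proposal is correct and follows essentially the same approach as the paper's proof: both start from the telescoped difference $\vz^{(k+1)}-\vz^{(k)}=\frac{1}{1-\beta_1}(\vx^{(k+1)}-\vx^{(k)})-\frac{\beta_1}{1-\beta_1}(\vx^{(k)}-\vx^{(k-1)})$, substitute the update \eqref{eq:update-x-unc} (using Remark~\ref{rm:vtilde}(II) to pass to $\widetilde\vV$), expand $\vm^{(k)}=\beta_1\vm^{(k-1)}+(1-\beta_1)\vg^{(k)}$, and then recognize $\alpha_{k-1}(\widetilde\vV^{(k-1)})^{-1/2}\vm^{(k-1)}=\vx^{(k-1)}-\vx^{(k)}$ to obtain \eqref{zzandxx}. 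The only cosmetic difference is that the paper factors $\alpha_{k-1}(\widetilde\vV^{(k-1)})^{-1/2}$ out of the bracket in \eqref{zzandm} and then identifies the product with $\vx^{(k-1)}-\vx^{(k)}$, whereas you invert first to express $\vm^{(k-1)}$ and substitute; these are the same algebraic step.
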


\begin{lemma}\label{lem:terms1}
Let $\{\vx^{(k)}\}$ be from Algorithm~\ref{alg:async-adp-sgm}, $\{\vz^{(k)}\}$ defined as in \eqref{defzk}, and $\{\widetilde\vv^{(k)}\}$ in Definition~\ref{def:Delta}. Also, let $\{\alpha_k\}$ be a non-increasing positive sequence. For $k= 2,\ldots,K,$ we have
\begin{align}\label{eq:terms1}
&~\nabla F(\vx^{(k)})^{\top} (\vz^{(k+1)}-\vz^{(k)})\\
\leq &~ -\nabla F(\vx^{(k)})^{\top} \alpha_{k-1}(\widetilde\vV^{(k-1)})^{-\frac{1}{2}}\vg^{(k)} + \frac{1}{1-\beta_1}\sum_{i=1}^{n}  \Gamma_i\Phi_i \left[\alpha_{k-1}(\widetilde v^{(k-1)}_i)^{-\frac{1}{2}}-\alpha_k(\widetilde v^{(k)}_i)^{-\frac{1}{2}} \right].\nonumber
\end{align}
\end{lemma}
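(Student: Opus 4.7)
The plan is to start from the representation of $\vz^{(k+1)}-\vz^{(k)}$ provided by \eqref{zzandm} in Lemma~\ref{lem:zz}, take an inner product with $\nabla F(\vx^{(k)})$, and then rearrange so that the "bad" terms appear multiplied by the diagonal matrix $\alpha_{k-1}(\widetilde\vV^{(k-1)})^{-1/2}-\alpha_k(\widetilde\vV^{(k)})^{-1/2}$, whose diagonal is nonnegative and can be bounded in a useful telescoping way.

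First I would write
\[
\nabla F(\vx^{(k)})^{\top}(\vz^{(k+1)}-\vz^{(k)})=\frac{\beta_1}{1-\beta_1}\nabla F(\vx^{(k)})^{\top}\!\left[\alpha_{k-1}(\widetilde\vV^{(k-1)})^{-\frac{1}{2}}-\alpha_k(\widetilde\vV^{(k)})^{-\frac{1}{2}}\right]\!\vm^{(k-1)}-\alpha_k\nabla F(\vx^{(k)})^{\top}(\widetilde\vV^{(k)})^{-\frac{1}{2}}\vg^{(k)}.
\]
Then I would add and subtract $\alpha_{k-1}\nabla F(\vx^{(k)})^{\top}(\widetilde\vV^{(k-1)})^{-\frac{1}{2}}\vg^{(k)}$ in order to produce exactly the first term on the right-hand side of \eqref{eq:terms1}, and to collect the leftover pieces as a single matrix factor $[\alpha_{k-1}(\widetilde\vV^{(k-1)})^{-1/2}-\alpha_k(\widetilde\vV^{(k)})^{-1/2}]$ acting on the vector $\vg^{(k)}+\tfrac{\beta_1}{1-\beta_1}\vm^{(k-1)}$. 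Here I would use the update rule \eqref{eq:update-m}, which gives the clean identity $\vg^{(k)}+\tfrac{\beta_1}{1-\beta_1}\vm^{(k-1)}=\tfrac{1}{1-\beta_1}\vm^{(k)}$, so the remainder becomes $\tfrac{1}{1-\beta_1}\nabla F(\vx^{(k)})^{\top}[\alpha_{k-1}(\widetilde\vV^{(k-1)})^{-1/2}-\alpha_k(\widetilde\vV^{(k)})^{-1/2}]\vm^{(k)}$.

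The final step is to bound this remainder componentwise. Because $\{\alpha_k\}$ is non-increasing and $\widetilde\vv^{(k)}\ge\widetilde\vv^{(k-1)}$ (componentwise, by Remark~\ref{rm:vtilde}), each diagonal entry $\alpha_{k-1}(\widetilde v^{(k-1)}_i)^{-1/2}-\alpha_k(\widetilde v^{(k)}_i)^{-1/2}$ is nonnegative, so I can apply the pointwise bounds $|m^{(k)}_i|\le\Gamma_i$ and $|\nabla_i F(\vx^{(k)})|\le\Phi_i$ from Lemma~\ref{lem:indbnd} to obtain the sum $\tfrac{1}{1-\beta_1}\sum_i\Gamma_i\Phi_i[\alpha_{k-1}(\widetilde v^{(k-1)}_i)^{-1/2}-\alpha_k(\widetilde v^{(k)}_i)^{-1/2}]$, which matches the right-hand side of \eqref{eq:terms1}.

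There is no serious obstacle here: the work is essentially algebraic. The only subtle point I would be careful about is the algebraic trick of adding and subtracting $\alpha_{k-1}(\widetilde\vV^{(k-1)})^{-1/2}\vg^{(k)}$, because it is what lets the $\vg^{(k)}$ and $\vm^{(k-1)}$ contributions merge into $\vm^{(k)}$ via the momentum recursion; without that consolidation, one would be forced to bound $|g_i^{(k)}|$ and $|m_i^{(k-1)}|$ separately, which would still work (both are bounded by $\Gamma_i$) but would produce a less clean statement than the one claimed. Verifying the sign of the diagonal difference, which relies on the monotonicity properties of $\widetilde\vv^{(k)}$ and $\alpha_k$, is the other detail I would double-check before concluding.
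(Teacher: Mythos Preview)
Your proposal is correct and follows essentially the same approach as the paper's proof: start from \eqref{zzandm}, add and subtract $\alpha_{k-1}\nabla F(\vx^{(k)})^{\top}(\widetilde\vV^{(k-1)})^{-1/2}\vg^{(k)}$, and bound the remaining difference terms componentwise via $|m_i|\le\Gamma_i$, $|\nabla_i F(\vx^{(k)})|\le\Phi_i$, and the nonnegativity of $\alpha_{k-1}(\widetilde v^{(k-1)}_i)^{-1/2}-\alpha_k(\widetilde v^{(k)}_i)^{-1/2}$. The only cosmetic difference is that the paper bounds the $\vm^{(k-1)}$ and $\vg^{(k)}$ contributions separately (obtaining coefficients $\tfrac{\beta_1}{1-\beta_1}$ and $1$ that sum to $\tfrac{1}{1-\beta_1}$), whereas you first merge them into $\tfrac{1}{1-\beta_1}\vm^{(k)}$ via the momentum recursion and then bound once---exactly the alternative you yourself flag in the last paragraph.
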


The next two lemmas are directly from \cite{zhou2018convergence}. Although the original results are for $k\ge2$, they trivially hold when $k=1$.

\begin{lemma}[Lemma A.4 of \cite{zhou2018convergence}]\label{lem:zznormbound}
Let $\{\vz^{(k)}\}$ be defined as in \eqref{defzk}, and let $\{\alpha_k\}_{k\geq 1}$ be a non-increasing positive sequence. For $k\ge 1$, we have
\begin{equation}\label{eq:zznormbound}
\|\vz^{(k+1)}-\vz^{(k)}\|\leq \frac{\beta_1}{1-\beta_1} \|\vx^{(k-1)}-\vx^{(k)}\|+\|\alpha_k(\widetilde\vV^{(k)})^{-\frac{1}{2}}\vg^{(k)}\|.
\end{equation}
\end{lemma}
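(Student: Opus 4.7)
The plan is to dispatch the base case $k=1$ immediately, then handle $k\ge 2$ by applying the triangle inequality to the representation of $\vz^{(k+1)}-\vz^{(k)}$ given in \eqref{zzandxx}, and finally bound the spectral norm of the resulting diagonal operator by one using monotonicity of $\{\alpha_k\}$ and $\{\widetilde\vv^{(k)}\}$.

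For $k=1$, I would quote \eqref{zzatone}, which gives $\vz^{(2)}-\vz^{(1)} = -\alpha_1(\widetilde\vV^{(1)})^{-1/2}\vg^{(1)}$. Since $\vx^{(0)}=\vx^{(1)}$ by convention, the right-hand side of \eqref{eq:zznormbound} collapses to $\|\alpha_1(\widetilde\vV^{(1)})^{-1/2}\vg^{(1)}\|$, and the claim follows with equality.

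For $k\ge 2$, I would start from \eqref{zzandxx} in Lemma~\ref{lem:zz}, rewriting
\[
\vz^{(k+1)}-\vz^{(k)} = \frac{\beta_1}{1-\beta_1}\bigl[\vI - \vD_k\bigr](\vx^{(k-1)}-\vx^{(k)}) - \alpha_k(\widetilde\vV^{(k)})^{-\frac{1}{2}}\vg^{(k)},
\]
where $\vD_k := \alpha_k(\widetilde\vV^{(k)})^{-1/2}\alpha_{k-1}^{-1}(\widetilde\vV^{(k-1)})^{1/2}$ is diagonal. The triangle inequality then gives
\[
\|\vz^{(k+1)}-\vz^{(k)}\| \leq \frac{\beta_1}{1-\beta_1}\,\|\vI-\vD_k\|\cdot\|\vx^{(k-1)}-\vx^{(k)}\| + \|\alpha_k(\widetilde\vV^{(k)})^{-\frac{1}{2}}\vg^{(k)}\|,
\]
so it suffices to show $\|\vI-\vD_k\|\le 1$. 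Since $\vI-\vD_k$ is diagonal with $i$-th entry $1-\tfrac{\alpha_k}{\alpha_{k-1}}\sqrt{\widetilde v_i^{(k-1)}/\widetilde v_i^{(k)}}$, this reduces to verifying that each entry lies in $[0,1]$, i.e., that $\tfrac{\alpha_k}{\alpha_{k-1}}\sqrt{\widetilde v_i^{(k-1)}/\widetilde v_i^{(k)}} \in [0,1]$.

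The non-increasing assumption on $\{\alpha_k\}$ gives $\alpha_k/\alpha_{k-1}\in[0,1]$; meanwhile, Remark~\ref{rm:vtilde}(I) guarantees $\widetilde v_i^{(k)}\ge \widetilde v_i^{(k-1)}\ge 0$ componentwise and that the entries used are positive whenever they are invoked (since we only need the ratio on coordinates with $\widetilde v_i^{(k-1)}>0$, and on coordinates with $\widetilde v_i^{(k-1)}=0$ the corresponding term in $\vD_k$ vanishes). The mild care in handling coordinates where $\widetilde v_i^{(k-1)}$ might be zero—which is precisely what the auxiliary sequence $\{\widetilde\vv^{(k)}\}$ from Definition~\ref{def:Delta} was introduced for—is the only nonroutine step; once it is dealt with, the componentwise bound yields $\|\vI-\vD_k\|\le 1$ and the lemma follows. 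I do not anticipate any other obstacle, since the construction of $\vz^{(k)}$ was designed precisely so that the telescoping structure combines cleanly with the adaptive-rate monotonicity.
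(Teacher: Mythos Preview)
Your argument is correct and is essentially the standard one: the paper itself does not supply a proof here but simply cites \cite{zhou2018convergence} (noting that the $k=1$ case is trivial), and your route via \eqref{zzandxx} together with the spectral bound $\|\vI-\vD_k\|\le 1$ is exactly how that result is obtained. One small simplification: your parenthetical about coordinates with $\widetilde v_i^{(k-1)}=0$ is unnecessary, since Remark~\ref{rm:vtilde}(I) already guarantees $\widetilde\vv^{(k)}>\vzero$ for all $k\in[K]$.
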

\begin{lemma}[Lemma A.5 of \cite{zhou2018convergence}]\label{lem:zxlip}
Let $\{\vz^{(k)}\}$ be defined as in \eqref{defzk} and $\widetilde\vV^{(k)}$ in Definition~\ref{def:Delta}.  Under Assumption~\ref{smoothness}, we have
\begin{equation}\label{eq:zxlip}
\|\nabla F(\vz^{(k)})-\nabla F(\vx^{(k)})\|\leq \frac{L\beta_1 }{1-\beta_1} \|\vx^{(k-1)}-\vx^{(k)}\|, \forall\, k\ge 1.
\end{equation}
\end{lemma}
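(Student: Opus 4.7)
The plan is to prove this bound by a direct, one-line application of the Lipschitz gradient assumption after rewriting the difference $\vz^{(k)}-\vx^{(k)}$ using the defining identity \eqref{defzk}. There is essentially no obstacle here; the lemma is a consequence of how the auxiliary sequence $\{\vz^{(k)}\}$ is constructed.

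First, I would unpack the definition of $\vz^{(k)}$ in \eqref{defzk} to obtain
\begin{equation*}
\vz^{(k)}-\vx^{(k)}=\frac{\beta_1}{1-\beta_1}\bigl(\vx^{(k)}-\vx^{(k-1)}\bigr),\qquad k\ge 1.
\end{equation*}
For $k\ge 2$ this is immediate from \eqref{defzk}. For the base case $k=1$, recall from just above \eqref{defzk} that we set $\vx^{(0)}=\vx^{(1)}$, so the right-hand side vanishes and $\vz^{(1)}=\vx^{(1)}$; in particular $\nabla F(\vz^{(1)})-\nabla F(\vx^{(1)})=\vzero$, which matches the claimed bound (also zero) when $k=1$.

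Next, I would invoke Assumption~\ref{smoothness} (the $L$-smoothness of $F$), which gives
\begin{equation*}
\bigl\|\nabla F(\vz^{(k)})-\nabla F(\vx^{(k)})\bigr\|\le L\bigl\|\vz^{(k)}-\vx^{(k)}\bigr\|.
\end{equation*}
Combining this with the identity from the previous step yields
\begin{equation*}
\bigl\|\nabla F(\vz^{(k)})-\nabla F(\vx^{(k)})\bigr\|\le L\cdot\frac{\beta_1}{1-\beta_1}\bigl\|\vx^{(k)}-\vx^{(k-1)}\bigr\|=\frac{L\beta_1}{1-\beta_1}\bigl\|\vx^{(k-1)}-\vx^{(k)}\bigr\|,
\end{equation*}
which is exactly \eqref{eq:zxlip}. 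This handles all $k\ge 1$ and completes the proof. The only mild subtlety is the $k=1$ case, but it is trivial under the convention $\vx^{(0)}=\vx^{(1)}$ already introduced in the paper.
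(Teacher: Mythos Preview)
Your proof is correct and is exactly the natural one-line argument: rewrite $\vz^{(k)}-\vx^{(k)}=\frac{\beta_1}{1-\beta_1}(\vx^{(k)}-\vx^{(k-1)})$ from \eqref{defzk} and apply $L$-smoothness. The paper does not supply its own proof of this lemma (it is quoted directly from \cite{zhou2018convergence}), and your argument is the standard one.
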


We still need the following lemma to show our main convergence result for the non-convex case.

\begin{lemma}\label{lem:terms23}
Let $\{\vz^{(k)}\}$ be defined as in \eqref{defzk} and $\widetilde\vV^{(k)}$ in Definition~\ref{def:Delta}. Also, let $\{\alpha_k\}$ be a non-increasing positive sequence. Under Assumption~\ref{smoothness}, we have that for all $k\ge1$,
\begin{equation}\label{eq:terms2}
\left(\nabla F(\vz^{(k)})-\nabla F(\vx^{(k)})\right)^{\top} (\vz^{(k+1)}-\vz^{(k)})\leq \frac{3L\beta_1^2}{2(1-\beta_1)^2} \|\vx^{(k-1)}-\vx^{(k)}\|^2+\frac{L}{2}\|\alpha_k(\widetilde\vV^{(k)})^{-\frac{1}{2}}\vg^{(k)}\|^2,
\end{equation}
and 
\begin{equation}\label{eq:terms3}
\|\vz^{(k+1)}-\vz^{(k)}\|^2\leq \frac{4\beta_1^2}{(1-\beta_1)^2} \|\vx^{(k-1)}-\vx^{(k)}\|^2+\frac{4}{3}\|\alpha_k(\widetilde\vV^{(k)})^{-\frac{1}{2}}\vg^{(k)}\|^2.
\end{equation}
\end{lemma}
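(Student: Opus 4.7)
The plan is to establish both inequalities in Lemma~\ref{lem:terms23} by combining the two preceding lemmas (Lemma~\ref{lem:zznormbound} and Lemma~\ref{lem:zxlip}) with Cauchy--Schwarz and Young's inequality, carefully tuning the Young's parameter to produce the specific constants in the statement.

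For the first inequality \eqref{eq:terms2}, I would first apply Cauchy--Schwarz to the inner product, giving
\[
(\nabla F(\vz^{(k)})-\nabla F(\vx^{(k)}))^{\top}(\vz^{(k+1)}-\vz^{(k)})
\le \|\nabla F(\vz^{(k)})-\nabla F(\vx^{(k)})\|\cdot\|\vz^{(k+1)}-\vz^{(k)}\|.
\]
Then I would substitute the Lipschitz-type bound \eqref{eq:zxlip} from Lemma~\ref{lem:zxlip} for the first factor and the triangle-inequality bound \eqref{eq:zznormbound} from Lemma~\ref{lem:zznormbound} for the second. Expanding the product yields a pure term proportional to $\|\vx^{(k-1)}-\vx^{(k)}\|^{2}$ and a cross term of the form $\frac{L\beta_{1}}{1-\beta_{1}}\|\vx^{(k-1)}-\vx^{(k)}\|\cdot\|\alpha_{k}(\widetilde{\vV}^{(k)})^{-1/2}\vg^{(k)}\|$. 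Splitting the cross term via Young's inequality $ab\le \tfrac12 a^{2}+\tfrac12 b^{2}$ with $a=\sqrt{L}\,\tfrac{\beta_{1}}{1-\beta_{1}}\|\vx^{(k-1)}-\vx^{(k)}\|$ and $b=\sqrt{L}\,\|\alpha_{k}(\widetilde{\vV}^{(k)})^{-1/2}\vg^{(k)}\|$ produces exactly $\frac{L\beta_{1}^{2}}{2(1-\beta_{1})^{2}}\|\vx^{(k-1)}-\vx^{(k)}\|^{2}+\frac{L}{2}\|\alpha_{k}(\widetilde{\vV}^{(k)})^{-1/2}\vg^{(k)}\|^{2}$, and combining it with the pure term $\frac{L\beta_{1}^{2}}{(1-\beta_{1})^{2}}\|\vx^{(k-1)}-\vx^{(k)}\|^{2}$ recovers the coefficient $\frac{3L\beta_{1}^{2}}{2(1-\beta_{1})^{2}}$ claimed in \eqref{eq:terms2}.

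For the second inequality \eqref{eq:terms3}, I would square \eqref{eq:zznormbound}, writing $\|\vz^{(k+1)}-\vz^{(k)}\|^{2}\le(A+B)^{2}=A^{2}+2AB+B^{2}$ with $A=\frac{\beta_{1}}{1-\beta_{1}}\|\vx^{(k-1)}-\vx^{(k)}\|$ and $B=\|\alpha_{k}(\widetilde{\vV}^{(k)})^{-1/2}\vg^{(k)}\|$. Applying $2AB\le \epsilon A^{2}+\tfrac{1}{\epsilon}B^{2}$ with $\epsilon=3$ gives $(A+B)^{2}\le 4A^{2}+\tfrac{4}{3}B^{2}$, which is exactly \eqref{eq:terms3} after substituting the definitions of $A$ and $B$. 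The choice $\epsilon=3$ is the unique value that balances both coefficients to match the statement.

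There is no genuine obstacle here; the lemma is essentially a mechanical consequence of Lemmas~\ref{lem:zznormbound} and \ref{lem:zxlip} together with Young's inequality. The only subtlety is selecting the Young's parameters so that the final coefficients are exactly $\frac{3L\beta_{1}^{2}}{2(1-\beta_{1})^{2}}$, $\frac{L}{2}$, $\frac{4\beta_{1}^{2}}{(1-\beta_{1})^{2}}$ and $\frac{4}{3}$; both choices above are forced by matching the two coefficients, so once the algebra is set up, the proof essentially writes itself. No additional assumptions beyond the $L$-smoothness of $F$ (Assumption~\ref{smoothness}) and the monotonicity of $\{\alpha_{k}\}$ (already used in Lemmas~\ref{lem:zznormbound} and \ref{lem:zxlip}) are needed.
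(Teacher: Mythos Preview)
Your proposal is correct and follows essentially the same approach as the paper: Cauchy--Schwarz followed by Lemmas~\ref{lem:zxlip} and~\ref{lem:zznormbound} and then Young's inequality for \eqref{eq:terms2}, and squaring \eqref{eq:zznormbound} with the elementary bound $(a+b)^2\le 4a^2+\tfrac{4}{3}b^2$ for \eqref{eq:terms3}. Your write-up is in fact slightly more explicit than the paper's, since you spell out the Young parameters that force the stated constants, whereas the paper only invokes Young's inequality and the inequality $(a+b)^2\le 4a^2+\tfrac{4}{3}b^2$ without justification.
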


\subsection{Convergence rate results}

By the lemmas established in the previous subsection, we are ready to show the convergence result of Algorithm~\ref{alg:async-adp-sgm} for non-convex problems. The next theorem gives the convergence rate without specifying the learning rate $\{\alpha_k\}$.

\begin{theorem}\label{thm:ncv}
Given an integer $K\ge2$, let $\{\vx^{(k)}\}_{k=1}^K$ and $\{\widehat\vv^{(k)}\}_{k=1}^K$ be generated from Algorithm~\ref{alg:async-adp-sgm} with a non-increasing positive sequence $\{\alpha_k\}_{k=1}^{K}$. Suppose $\widehat\vv^{(K)}>\vzero$.  
Suppose that there is a constant $C_F$ such that $|F(\vx)| \le C_F,\, \forall\,\vx$. Let $2\le k_0 \le K$ and $\bar\vx^{(k_0,K)}$ be drawn from  $\{\vx^{(k)}\}_{k=k_0}^{K}$ with probability 
\begin{equation}\label{eq:xbar-k0-K}
\textstyle \Prob(\bar\vx^{(k_0,K)}= \vx^{(k)}) = \frac{\alpha_{k-1}}{\sum_{j=k_0}^{K}\alpha_{j-1}},\,\forall\, k=k_0,\ldots, K.
\end{equation}
Then under Assumptions~\ref{assump:bound-grad2} through \ref{assump:bound-tau}, it holds
\begin{align}\label{eq:telescope3-k0}
&~\EE\|\nabla F(\bar\vx^{(k_0,K)})\|^2\\
\leq &~  \frac{  G_{\infty}^3\EE \|(\widetilde \vv^{(k_0-1)})^{-\frac{1}{2}}\|_1 }{1-\beta_1}   
 \frac{\alpha_{k_0-1}}{\sum_{k=k_0}^{K}\alpha_{k-1}}+\frac{ 2C_F G_{\infty}}{\sum_{k=k_0}^{K}\alpha_{k-1}}+ \frac{7  n  L G_{\infty}}{6 (1-\beta_2)}\frac{ \sum_{k=k_0}^{K}\alpha_{k}^2}{\sum_{k=k_0}^{K}\alpha_{k-1}} \cr
 &+ \frac{7  n  L G_{\infty}\beta_1^2}{2 (1-\beta_2)(1-\beta_1)^2} \cdot\frac{ \sum_{k=k_0}^{K}\alpha_{k-1}^2}{\sum_{k=k_0}^{K}\alpha_{k-1}} + \frac{ \sqrt{ n } L G_{\infty}  \overset{K}{ \underset{k=k_0}\sum}\alpha_{k-1}\overset{\tau}{\underset{j=1}\sum}\alpha_{k-j}\sqrt{\EE\|(\widetilde\vV^{(k-1)})^{-\frac{1}{2}}\nabla F(\vx^{(k)})\|^2}}{\sqrt{1-\beta_2}\sum_{t=k_0}^{K}\alpha_{t-1}}.\nonumber
\end{align}
\end{theorem}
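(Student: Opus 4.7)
The natural starting point is to apply $L$-smoothness of $F$ to the auxiliary sequence $\{\vz^{(k)}\}$ from \eqref{defzk}. This yields
$$F(\vz^{(k+1)}) \leq F(\vz^{(k)}) + \nabla F(\vz^{(k)})^{\top}(\vz^{(k+1)}-\vz^{(k)}) + \tfrac{L}{2}\|\vz^{(k+1)}-\vz^{(k)}\|^2.$$
I would decompose $\nabla F(\vz^{(k)})^{\top}(\vz^{(k+1)}-\vz^{(k)}) = \nabla F(\vx^{(k)})^{\top}(\vz^{(k+1)}-\vz^{(k)}) + (\nabla F(\vz^{(k)})-\nabla F(\vx^{(k)}))^{\top}(\vz^{(k+1)}-\vz^{(k)})$, and then invoke Lemma~\ref{lem:terms1} to bound the first piece by a telescoping term in $\alpha_{k-1}(\widetilde\vV^{(k-1)})^{-1/2}$ plus $-\nabla F(\vx^{(k)})^{\top}\alpha_{k-1}(\widetilde\vV^{(k-1)})^{-1/2}\vg^{(k)}$; then invoke Lemma~\ref{lem:terms23} (inequalities \eqref{eq:terms2} and \eqref{eq:terms3}) to bound the second piece and the quadratic term by $\|\vx^{(k-1)}-\vx^{(k)}\|^2$ and $\|\alpha_k(\widetilde\vV^{(k)})^{-1/2}\vg^{(k)}\|^2$.

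Next I would take conditional expectation. Assumption~\ref{assump:unbiased} gives $\EE[\vg^{(k)}\mid\cH_k]=\nabla F(\widehat\vx^{(k)})$, so the critical cross term becomes
$$-\alpha_{k-1}\EE\bigl[\nabla F(\vx^{(k)})^{\top}(\widetilde\vV^{(k-1)})^{-1/2}\nabla F(\vx^{(k)})\bigr] + \alpha_{k-1}\EE\bigl[\nabla F(\vx^{(k)})^{\top}(\widetilde\vV^{(k-1)})^{-1/2}(\nabla F(\vx^{(k)})-\nabla F(\widehat\vx^{(k)}))\bigr].$$
The first piece, by Lemma~\ref{lem:indbnd} ($\widetilde v_i^{(k-1)}\le G_\infty^2$), dominates $\frac{\alpha_{k-1}}{G_\infty}\EE\|\nabla F(\vx^{(k)})\|^2$ and is the "descent" driver. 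The second is the delay error, which I would control by Cauchy--Schwarz, Assumption~\ref{smoothness}, and inequalities \eqref{eq:x-hat1}--\eqref{eq:x-hat2} combined with Lemma~\ref{lem:nonexpansive} and \eqref{eq:bd-m-divide-v-3} to get $\EE\|\vx^{(k-l)}-\vx^{(k-l-1)}\|^2 \le \frac{s\alpha_{k-l-1}^2}{1-\beta_2}$. This is where the $\sqrt{s}LG_{\infty}\,\alpha_{k-1}\sum_{j=1}^{\tau}\alpha_{k-j}\sqrt{\EE\|(\widetilde\vV^{(k-1)})^{-1/2}\nabla F(\vx^{(k)})\|^2}$ term in \eqref{eq:telescope3-k0} originates. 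The same sparsity-based bound also controls $\EE\|\alpha_k(\widetilde\vV^{(k)})^{-1/2}\vg^{(k)}\|^2 \le \frac{s\alpha_k^2}{1-\beta_2}$.

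The third step is to sum the resulting inequality from $k=k_0$ to $K$. The telescoping term from Lemma~\ref{lem:terms1} collapses into $\sum_i \Gamma_i\Phi_i\alpha_{k_0-1}(\widetilde v_i^{(k_0-1)})^{-1/2}$, which under Assumption~\ref{assump:bound-grad} is at most $G_\infty^2\alpha_{k_0-1}\EE\|(\widetilde\vv^{(k_0-1)})^{-1/2}\|_1$. Telescoping $F(\vz^{(k+1)})-F(\vz^{(k)})$ and using $|F|\le C_F$ gives a $2C_F$ factor. Dividing through by $\sum_{k=k_0}^{K}\alpha_{k-1}$ converts the weighted sum $\sum_{k=k_0}^{K}\alpha_{k-1}\EE\|\nabla F(\vx^{(k)})\|^2$ into $\EE\|\nabla F(\bar\vx^{(k_0,K)})\|^2$ by the probability law \eqref{eq:xbar-k0-K}, and packaging each contribution yields the five terms in \eqref{eq:telescope3-k0}.

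The main obstacle, in my view, will be handling the delay term carefully. Unlike the convex analysis where $\|\vx^{(k)}-\widehat\vx^{(k)}\|^2$ appears quadratically and can be absorbed by \eqref{eq:x-hat2} and Lemma~\ref{lem:bd-mg-v} alone, here it is coupled linearly through $(\widetilde\vV^{(k-1)})^{-1/2}\nabla F(\vx^{(k)})$. I cannot simply bound it by a constant times $\sum \alpha_{k-1}^2$, because doing so would sacrifice the sparsity-weighted $\sqrt{s}$ dependence and would not produce the precise form of the last term in \eqref{eq:telescope3-k0}. Keeping the $\sqrt{\EE\|(\widetilde\vV^{(k-1)})^{-1/2}\nabla F(\vx^{(k)})\|^2}$ factor explicit—so that the eventual deployment of this theorem can split it against the descent term via an inequality of Young type—is the delicate accounting step, and it forces the appearance of the double sum $\sum_{k=k_0}^{K}\alpha_{k-1}\sum_{j=1}^{\tau}\alpha_{k-j}$ rather than a simpler closed form.
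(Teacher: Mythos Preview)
Your proposal is correct and follows essentially the same route as the paper: descent on $F(\vz^{(k)})$ via $L$-smoothness, the decomposition using Lemmas~\ref{lem:terms1} and~\ref{lem:terms23}, conditional expectation to replace $\vg^{(k)}$ by $\nabla F(\widehat\vx^{(k)})$, extraction of $G_\infty^{-1}\alpha_{k-1}\EE\|\nabla F(\vx^{(k)})\|^2$, and telescoping over $k_0\le k\le K$. Two small remarks: (i) since $X=\RR^n$ here, \eqref{eq:update-x-unc} gives $\|\vx^{(k)}-\vx^{(k-1)}\|=\alpha_{k-1}\|\vm^{(k-1)}\oslash\sqrt{\widehat\vv^{(k-1)}}\|$ with equality, so Lemma~\ref{lem:nonexpansive} is not needed; (ii) for the delay term the paper uses \eqref{eq:x-hat1} together with \eqref{eq:bd-m-divide-v-2} and then applies Cauchy--Schwarz to $\EE[\,\|(\widetilde\vV^{(k-1)})^{-1/2}\nabla F(\vx^{(k)})\|\sqrt{\|\vg^{(l)}\|_0}\,]$, whereas your outline bounds $\EE\|\vx^{(k-l)}-\vx^{(k-l-1)}\|^2$ via \eqref{eq:bd-m-divide-v-3} and applies Cauchy--Schwarz termwise after \eqref{eq:x-hat1}---both routes yield the same factor $\frac{\sqrt s}{\sqrt{1-\beta_2}}\sum_{j=1}^\tau\alpha_{k-j}$, so this is a harmless variation.
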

\begin{proof}
By the $L$-smoothness of $F,$ it follows that
\begin{align}\label{eq:break}
F(\vz^{(k+1)})\leq &~ F(\vz^{(k)})  + \nabla F(\vz^{(k)})^{\top} (\vz^{(k+1)}-\vz^{(k)}) + \frac{L}{2}\|\vz^{(k+1)}-\vz^{(k)}\|^2\nonumber\\
= &~F(\vz^{(k)})  + \nabla F(\vx^{(k)})^{\top} (\vz^{(k+1)}-\vz^{(k)}) \\
&~+ \left(\nabla F(\vz^{(k)})-\nabla F(\vx^{(k)})\right)^{\top} (\vz^{(k+1)}-\vz^{(k)}) + \frac{L}{2}\|\vz^{(k+1)}-\vz^{(k)}\|^2. \nonumber
\end{align}


For $2\le k\le K$, we substitute \eqref{eq:terms1}, \eqref{eq:terms2} and \eqref{eq:terms3} into \eqref{eq:break} and rearrange terms to have  
\begin{align}\label{eq:boundk2a}
&~F(\vz^{(k+1)}) + \frac{\sum_{i=1}^{n}  \Gamma_i\Phi_i \alpha_{k}(\widetilde v^{(k)}_i)^{-\frac{1}{2}}}{1-\beta_1} -\Bigg( F(\vz^{(k)}) + \frac{\sum_{i=1}^{n}  \Gamma_i\Phi_i \alpha_{k-1}(\widetilde v^{(k-1)}_i)^{-\frac{1}{2}}}{1-\beta_1} \Bigg)\nonumber\\
\leq &~ -\nabla F(\vx^{(k)})^{\top} \alpha_{k-1}(\widetilde\vV^{(k-1)})^{-\frac{1}{2}}\vg^{(k)} + \frac{7L\beta_1^2}{2(1-\beta_1)^2} \|\vx^{(k-1)}-\vx^{(k)}\|^2+\frac{7L}{6}\|\alpha_k(\widetilde\vV^{(k)})^{-\frac{1}{2}}\vg^{(k)}\|^2\nonumber\\
= & ~-\nabla F(\vx^{(k)})^{\top} \alpha_{k-1}(\widetilde\vV^{(k-1)})^{-\frac{1}{2}}\vg^{(k)} + \frac{7L\beta_1^2}{2(1-\beta_1)^2} \|\alpha_{k-1}\vm^{(k-1)}\oslash\sqrt{\widehat{\vv}^{(k-1)}} \|^2\nonumber\\
 & ~+\frac{7L}{6}\|\alpha_k(\widetilde\vV^{(k)})^{-\frac{1}{2}}\vg^{(k)}\|^2.
\end{align}
From Assumption~\ref{assump:unbiased}, we have $\EE\left[\vg^{(k)}\,|\,\cH_k\right]=\nabla F(\widehat \vx^{(k)})$, and thus taking the expectation on both sides of \eqref{eq:boundk2a} gives 
\begin{align}\label{eq:boundk2b}
&~\EE\Bigg[F(\vz^{(k+1)}) +   \frac{G_\infty^2 \alpha_{k}\|(\widetilde \vv^{(k)})^{-\frac{1}{2}}\|_1}{1-\beta_1} -  F(\vz^{(k)}) -\frac{G_\infty^2 \alpha_{k-1} \|(\widetilde \vv^{(k-1)} )^{-\frac{1}{2}}\|_1}{1-\beta_1} \Bigg]   \nonumber\\
\leq &~\EE\Bigg[F(\vz^{(k+1)}) +   \frac{\sum_{i=1}^{n}  \Gamma_i\Phi_i \alpha_{k}(\widetilde v^{(k)}_i)^{-\frac{1}{2}}}{1-\beta_1} -  F(\vz^{(k)}) -\frac{\sum_{i=1}^{n}  \Gamma_i\Phi_i \alpha_{k-1}(\widetilde v^{(k-1)}_i)^{-\frac{1}{2}}}{1-\beta_1} \Bigg]   \nonumber\\
\leq &~ \EE\Bigg[\frac{7L}{6}\|\alpha_k(\widetilde\vV^{(k)})^{-\frac{1}{2}}\vg^{(k)}\|^2 -\nabla F(\vx^{(k)})^{\top} \alpha_{k-1}(\widetilde\vV^{(k-1)})^{-\frac{1}{2}}\nabla F(\widehat \vx^{(k)}) \nonumber\\
&~+ \frac{7L\beta_1^2}{2(1-\beta_1)^2} \|\alpha_{k-1}\vm^{(k-1)}\oslash\sqrt{\widehat{\vv}^{(k-1)}} \|^2 \Bigg] \nonumber\\
= &~\EE\left[\frac{7L}{6}\|\alpha_k(\widetilde\vV^{(k)})^{-\frac{1}{2}}\vg^{(k)}\|^2 + \frac{7L\beta_1^2}{2(1-\beta_1)^2} \|\alpha_{k-1}\vm^{(k-1)}\oslash\sqrt{\widehat{\vv}^{(k-1)}} \|^2 \right]\nonumber\\
&~+\EE\left[  \nabla F(\vx^{(k)})^{\top} \alpha_{k-1}(\widetilde\vV^{(k-1)})^{-\frac{1}{2}}\left(\nabla F(\vx^{(k)}) -\nabla F(\widehat \vx^{(k)})\right)\right]\nonumber\\
&~-\EE\left[\nabla F(\vx^{(k)})^{\top} \alpha_{k-1}(\widetilde\vV^{(k-1)})^{-\frac{1}{2}}\nabla F(\vx^{(k)})
\right],
\end{align}
where the first inequality follows from $ \alpha_{k}(\widetilde v^{(k)}_i)^{-\frac{1}{2}}-\alpha_{k-1}(\widetilde v^{(k-1)}_i)^{-\frac{1}{2}}\le 0$ for all $i\in[n]$ and Lemma~\ref{lem:indbnd}. 
By the Cauchy-Schwarz inequality, the smoothness of $F$,  Assumption~\ref{assump:bound-tau}, and equations \eqref{eq:x-hat1} and \eqref{eq:update-x-unc}, we have
\begin{align*}
&~\nabla F(\vx^{(k)})^{\top} (\widetilde\vV^{(k-1)})^{-\frac{1}{2}}\left(\nabla F(\vx^{(k)}) -\nabla F(\widehat \vx^{(k)})\right) \\
 \le &~ \|(\widetilde\vV^{(k-1)})^{-\frac{1}{2}}\nabla F(\vx^{(k)})\| \cdot \|\nabla F(\vx^{(k)}) -\nabla F(\widehat \vx^{(k)})\|\\
 \le &~ L \|(\widetilde\vV^{(k-1)})^{-\frac{1}{2}}\nabla F(\vx^{(k)})\|\left(\sum_{j=1}^\tau \|\alpha_{k-j}\vm^{(k-j)}\oslash\sqrt{\widehat{\vv}^{(k-j)}} \|.\right)
\end{align*}
In addition, by Definition~\ref{def:Delta} and Lemma~\ref{lem:indbnd}, 
it follows
\begin{align*} 
\EE\left[\nabla F(\vx^{(k)})^{\top} (\widetilde\vV^{(k-1)})^{-\frac{1}{2}}\nabla F(\vx^{(k)})\right]
&\geq \EE\left[\|\widetilde\vv^{(k-1)}\|_{\infty}^{-\frac{1}{2}} \|\nabla F(\vx^{(k)})\|^2\right]  
\ge G_{\infty}^{-1}\EE\|\nabla F(\vx^{(k)})\|^2.
\end{align*}
Substituting the above two inequalities into \eqref{eq:boundk2b},  we have 
\begin{align}
& \EE\Bigg[F(\vz^{(k+1)}) +   \frac{G_\infty^2 \alpha_{k}\|(\widetilde \vv^{(k)})^{-\frac{1}{2}}\|_1}{1-\beta_1} -  F(\vz^{(k)}) -\frac{G_\infty^2 \alpha_{k-1} \|(\widetilde \vv^{(k-1)}_i )^{-\frac{1}{2}}\|_1}{1-\beta_1} \Bigg]   \nonumber\\
\leq& \EE\left[\frac{7L}{6}\|\alpha_k(\widetilde\vV^{(k)})^{-\frac{1}{2}}\vg^{(k)}\|^2 -  \frac{\alpha_{k-1}}{G_{\infty}} \|\nabla F(\vx^{(k)})\|^2
+ \frac{7L\beta_1^2}{2(1-\beta_1)^2}  \|\alpha_{k-1}\vm^{(k-1)}\oslash\sqrt{\widehat{\vv}^{(k-1)}} \|^2
\right] \nonumber\\
&+\EE\left[ \alpha_{k-1} L \|(\widetilde\vV^{(k-1)})^{-\frac{1}{2}}\nabla F(\vx^{(k)})\| \left(  \sum_{j=1}^\tau \|\alpha_{k-j}\vm^{(k-j)}\oslash\sqrt{\widehat{\vv}^{(k-j)}} \|\right)\right].\label{eq:boundk2c}
\end{align}
For any $2\le k_0\le K$, summing \eqref{eq:boundk2c} over $k=k_0,\ldots,K$ and using the condition $|F(\vx)| \le C_F,\, \forall\,\vx$, 
we have
\begin{align}
&  G_{\infty}^{-1} \sum_{k=k_0}^{K} \alpha_{k-1}\EE\|\nabla F(\vx^{(k)})\|^2\nonumber\\
\leq &~ 
2C_F+ \frac{G_\infty^2\alpha_{k_0-1}\EE\|(\widetilde \vv^{(k_0-1)})^{-\frac{1}{2}}\|_1}{1-\beta_1}   \nonumber\\
&+ \frac{7L}{6} \sum_{k=k_0}^{K}\EE\|\alpha_k\vg^{(k)}\oslash\sqrt{\widehat{\vv}^{(k)}}\|^2 + \frac{7L\beta_1^2}{2(1-\beta_1)^2}\sum_{k=k_0}^{K}\EE \|\alpha_{k-1}\vm^{(k-1)}\oslash\sqrt{\widehat{\vv}^{(k-1)}} \|^2 \nonumber\\
&+ L  \sum_{k=k_0}^{K}\alpha_{k-1} \EE\left[\|(\widetilde\vV^{(k-1)})^{-\frac{1}{2}}\nabla F(\vx^{(k)})\|   \left(\sum_{j=1}^\tau \|\alpha_{k-j}\vm^{(k-j)}\oslash\sqrt{\widehat{\vv}^{(k-j)}} \| \right)\right].\label{eq:telescope}
\end{align}
Now use \eqref{eq:bd-m-divide-v-2} of Lemma~\ref{lem:bd-mg-v} in the above inequality to have
\begin{align}
&~  G_{\infty}^{-1} \sum_{k=k_0}^{K} \alpha_{k-1}\EE\|\nabla F(\vx^{(k)})\|^2\nonumber\\
\leq &~  
2C_F+ \frac{G_\infty^2\alpha_{k_0-1}\EE\|(\widetilde \vv^{(k_0-1)})^{-\frac{1}{2}}\|_1}{1-\beta_1} + \frac{7  n  L}{6(1-\beta_2)} \sum_{k=k_0}^{K}\alpha_k^2 
  + \frac{7  n  L\beta_1^2}{2(1-\beta_2)(1-\beta_1)^2}\sum_{k=k_0}^{K}\alpha_{k-1}^2 \nonumber\\
&+ \frac{  L }{\sqrt{1-\beta_2}}\sum_{k=k_0}^{K}\alpha_{k-1}\sum_{j=1}^\tau\alpha_{k-j}\sum_{l=1}^{k-j}(1-\beta_1)\beta_1^{k-j-l}\EE\left[\|(\widetilde\vV^{(k-1)})^{-\frac{1}{2}}\nabla F(\vx^{(k)})\| \sqrt{\|\vg^{(l)}\|_0}\right]   \nonumber\\
\leq &~  
2C_F+ \frac{G_\infty^2\alpha_{k_0-1}\EE\|(\widetilde \vv^{(k_0-1)})^{-\frac{1}{2}}\|_1}{1-\beta_1}  + \frac{7  n  L}{6(1-\beta_2)} \sum_{k=k_0}^{K}\alpha_k^2 
  + \frac{7  n  L\beta_1^2}{2(1-\beta_2)(1-\beta_1)^2}\sum_{k=k_0}^{K}\alpha_{k-1}^2 \nonumber\\
&+ \frac{ \sqrt{ n } L }{\sqrt{1-\beta_2}}\sum_{k=k_0}^{K}\alpha_{k-1}\sqrt{\EE\left[\|(\widetilde\vV^{(k-1)})^{-\frac{1}{2}}\nabla F(\vx^{(k)})\|^2\right]}   \sum_{j=1}^\tau\alpha_{k-j},\label{eq:telescope2}
\end{align}
where the last inequality is by Cauchy-Schwarz inequality.
Dividing $G_{\infty}^{-1} \sum_{k=k_0}^{K}\alpha_{k-1}$ on both sides of \eqref{eq:telescope2} and using the definition of $\bar\vx^{(k_0,K)},$ we obtain the desired result in  \eqref{eq:telescope3-k0}. 
\end{proof}


Below we specify the choices of $\{\alpha_k\}$ and show the sublinear convergence.
\begin{theorem}\label{thm:NCV}
Given an integer $K\ge2$, let $\{\vx^{(k)}\}_{k=1}^K$ and $\{\widehat\vv^{(k)}\}_{k=1}^K$ be generated from Algorithm~\ref{alg:async-adp-sgm} with a non-increasing positive sequence $\{\alpha_k\}_{k=1}^{K}$. For some $2\le k_0\le K$, let $\bar\vx^{(k_0,K)}$ be drawn from  $\{\vx^{(k)}\}_{k=k_0}^{K}$ according to \eqref{eq:xbar-k0-K}.
Suppose Assumptions~\ref{assump:bound-grad2} through \ref{assump:bound-tau}
 hold. In addition, assume $\widehat\vv^{(K)}>\vzero$ 
Moreover, suppose that there are positive constants $C_F$ and $c$ such that $|F(\vx)| \le C_F,\, \forall\,\vx$, and $\widetilde v_i^{(k_0-1)}\ge c^2, \forall i\in [n]$ hold almost surely. The following results hold:
\begin{enumerate}
    \item\label{cor:ncv1} If $\alpha_k = \frac{\alpha}{\sqrt{K-k_0+1}}$ 
    for all $k$ and some constant $\alpha>0$, then 
\begin{equation}\label{eq:ncvconstant2}
\textstyle \EE\|\nabla F(\bar\vx^{(k_0,K)})\|^2\le C_1 + \frac{C_2}{c}\left(\sqrt{C_1}+\frac{C_2}{c}\right),
\end{equation}
where $C_2 =  \frac{ \alpha\tau\sqrt{ n } L G_{\infty}  }{\sqrt{1-\beta_2}\sqrt{K-k_0+1}}$, and
{
\begin{align*}
\textstyle C_1 = &~  \textstyle \frac{  G_{\infty}^3\EE \|(\widetilde \vv^{(k_0-1)})^{-\frac{1}{2}}\|_1 }{(1-\beta_1)(K-k_0+1)} +\frac{ 2C_F G_{\infty}}{\alpha\sqrt{K-k_0+1}}  \textstyle+\frac{7  n  L G_{\infty}(1-2\beta_1+4\beta_1^2)}{6 (1-\beta_2)(1-\beta_1)^2} \frac{\alpha}{\sqrt{K-k_0+1}}.
\end{align*}
} 
\vspace{-.2cm}
    \item\label{cor:ncv2} Let $k_0 = \frac{K}{2} \ge \tau+2$. If $\alpha_k = \frac{\alpha}{\sqrt{k}}$ for all $k\ge1$, 
    then \eqref{eq:ncvconstant2} holds, where $C_2 =  \textstyle\frac{2\sqrt{2} \alpha\tau\sqrt{ n } L G_{\infty}  }{\sqrt{K}\sqrt{1-\beta_2}}$, and
\begin{align*}
C_1 = &~   \textstyle \frac{  G_{\infty}^3\EE \|(\widetilde \vv^{(k_0-1)})^{-\frac{1}{2}}\|_1 }{(2-\sqrt{2})(1-\beta_1)\sqrt{K}\sqrt{K/2-1}}   
 \textstyle+\frac{ 2C_F G_{\infty}}{(2-\sqrt{2})\alpha\sqrt{K}} 
 + \textstyle \frac{7  n  L G_{\infty}}{6 (1-\beta_2)}\frac{ \alpha \log 4}{(2-\sqrt{2})\sqrt{K}} + \frac{7  n  L G_{\infty}\beta_1^2}{2 (1-\beta_2)(1-\beta_1)^2}  \frac{ \alpha(1+\log 3)}{(2-\sqrt{2})\sqrt{K}}.
\end{align*}
  \end{enumerate}
\end{theorem}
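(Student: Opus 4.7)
The proposal is to derive both cases as specializations of the generic bound \eqref{eq:telescope3-k0} in Theorem~\ref{thm:ncv}. First I would observe that under the assumption $\widetilde v_i^{(k_0-1)}\ge c^2$ for all $i$, together with the monotonicity $\widetilde\vv^{(k-1)}\ge \widetilde\vv^{(k_0-1)}$ for $k\ge k_0$ (Remark~\ref{rm:vtilde}), we get the key componentwise bound $\|(\widetilde\vV^{(k-1)})^{-\frac{1}{2}}\nabla F(\vx^{(k)})\|^2 \le c^{-2}\|\nabla F(\vx^{(k)})\|^2$ for every $k\ge k_0$. Substituting this into the last term of \eqref{eq:telescope3-k0} turns the troublesome factor involving $\widetilde\vV$ into a factor of $1/c$ times $\sqrt{\EE\|\nabla F(\vx^{(k)})\|^2}$.

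The next step is to convert the resulting weighted sum $\sum_{k=k_0}^{K}\alpha_{k-1}(\sum_{j=1}^\tau \alpha_{k-j})\sqrt{\EE\|\nabla F(\vx^{(k)})\|^2}$ into something involving $\sqrt{\EE\|\nabla F(\bar\vx^{(k_0,K)})\|^2}$. I would exploit the monotonicity of $\{\alpha_k\}$ to bound $\sum_{j=1}^\tau \alpha_{k-j}$ by $\tau$ times a single step size comparable to $\alpha_{k-1}$ (for the constant schedule this is exact; for $\alpha_k=\alpha/\sqrt{k}$ with $k\ge k_0 = K/2 \ge \tau+2$, the ratio $\alpha_{k-\tau}/\alpha_k$ stays bounded by a small absolute constant). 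Then Jensen's inequality applied to $\sqrt{\cdot}$ with weights $\alpha_{k-1}/\sum_{t=k_0}^K\alpha_{t-1}$ yields
\begin{equation*}
\frac{\sum_{k=k_0}^K \alpha_{k-1}\sqrt{\EE\|\nabla F(\vx^{(k)})\|^2}}{\sum_{t=k_0}^K \alpha_{t-1}} \le \sqrt{\EE\|\nabla F(\bar\vx^{(k_0,K)})\|^2}
\end{equation*}
by the definition \eqref{eq:xbar-k0-K} of $\bar\vx^{(k_0,K)}$. This way the last term of \eqref{eq:telescope3-k0} is controlled by $\frac{C_2}{c}\sqrt{\EE\|\nabla F(\bar\vx^{(k_0,K)})\|^2}$, with $C_2$ matching the stated form.

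For the remaining terms in \eqref{eq:telescope3-k0}, I would plug in each step size schedule and compute the sums explicitly. In case \ref{cor:ncv1}, with $\alpha_k=\alpha/\sqrt{K-k_0+1}$ constant on $[k_0,K]$, we have $\sum_{k=k_0}^K \alpha_{k-1}=\alpha\sqrt{K-k_0+1}$, $\sum_{k=k_0}^K \alpha_k^2=\sum_{k=k_0}^K\alpha_{k-1}^2=\alpha^2$, and $\alpha_{k_0-1}/\sum_{k=k_0}^K\alpha_{k-1}=1/(K-k_0+1)$; combining the third and fourth terms over a common denominator $(1-\beta_1)^2$ produces the $(1-2\beta_1+4\beta_1^2)$ factor in $C_1$. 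In case \ref{cor:ncv2}, with $\alpha_k=\alpha/\sqrt{k}$ and $k_0=K/2$, the standard integral comparisons give $\sum_{k=K/2}^K \alpha_{k-1} \ge (2-\sqrt{2})\alpha\sqrt{K}$, $\sum_{k=K/2}^K \alpha_k^2 \le \alpha^2\log 4$, and $\sum_{k=K/2}^K \alpha_{k-1}^2 \le \alpha^2(1+\log 3)$ (using $K/2\ge\tau+2\ge 2$), which reproduce exactly the stated $C_1$.

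Finally, the bound takes the form $Y \le C_1 + (C_2/c)\sqrt{Y}$ where $Y := \EE\|\nabla F(\bar\vx^{(k_0,K)})\|^2$. Solving this quadratic inequality in $\sqrt{Y}$ gives $\sqrt{Y}\le (C_2/c)+\sqrt{C_1}$ via $\sqrt{a+b}\le\sqrt{a}+\sqrt{b}$, and squaring yields \eqref{eq:ncvconstant2}. The main obstacle I anticipate is step two, namely bounding the delay-induced sum $\sum_{j=1}^\tau \alpha_{k-j}$ for the $1/\sqrt{k}$ schedule: the inequality $\alpha_{k-\tau}\le \sqrt{2}\,\alpha_k$ requires $k\ge 2\tau$, which must be squeezed out of the hypotheses $k\ge k_0=K/2\ge \tau+2$ (and carefully combined with the factor $\tau$ in front); getting the clean constant $2\sqrt{2}$ in $C_2$ demands a careful accounting here. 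Aside from that, the remaining calculations are routine sum-to-integral comparisons.
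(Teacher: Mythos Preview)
Your overall strategy matches the paper's: specialize \eqref{eq:telescope3-k0}, use $\widetilde\vv^{(k-1)}\ge c^2\vone$ for $k\ge k_0$ together with Jensen (weights $\alpha_{k-1}$) to control the last term by $(C_2/c)\sqrt{Y}$ where $Y=\EE\|\nabla F(\bar\vx^{(k_0,K)})\|^2$, and then close the inequality $Y\le C_1+(C_2/c)\sqrt{Y}$. Two details deserve comment.

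For Setting~\ref{cor:ncv2}, the obstacle you flag is real: one cannot extract $\alpha_{k-\tau}\le\sqrt{2}\,\alpha_k$ from $k\ge k_0=K/2\ge\tau+2$ (it already fails when $\tau$ is comparable to $K/2$). The paper sidesteps this entirely by bounding the inner sum \emph{uniformly} over $k\ge k_0$ rather than comparing $\alpha_{k-\tau}$ with $\alpha_k$: since $\{\alpha_k\}$ is nonincreasing,
\[
\sum_{j=1}^\tau \alpha_{k-j}\ \le\ \sum_{j=1}^\tau \alpha_{k_0-j}\ \le\ \int_{k_0-\tau-1}^{k_0-1}\frac{\alpha}{\sqrt{x}}\,dx\ =\ 2\alpha\bigl(\sqrt{k_0-1}-\sqrt{k_0-\tau-1}\bigr)\ \le\ \frac{2\alpha\tau}{\sqrt{k_0}}\ =\ \frac{2\sqrt{2}\,\alpha\tau}{\sqrt{K}},
\]
which uses only $k_0\ge\tau+2$ (so the lower limit of integration is at least $1$) and produces the constant $2\sqrt{2}$ directly. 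After pulling this constant out, the remaining weighted average $\bigl(\sum_k\alpha_{k-1}\sqrt{\EE\|\nabla F(\vx^{(k)})\|^2}\bigr)/\sum_t\alpha_{t-1}$ is handled by Jensen exactly as you describe.

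For the closing step, squaring $\sqrt{Y}\le\sqrt{C_1}+C_2/c$ does \emph{not} give \eqref{eq:ncvconstant2}; it yields the weaker $Y\le C_1+(C_2/c)\bigl(2\sqrt{C_1}+C_2/c\bigr)$, with an extra factor of $2$ on the cross term. The paper instead feeds the bound $\sqrt{Y}\le\sqrt{C_1}+C_2/c$ back into the original inequality $Y\le C_1+(C_2/c)\sqrt{Y}$, which gives exactly \eqref{eq:ncvconstant2}.
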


\begin{proof}
\textbf{Setting 1:}~~When $\alpha_k = \frac{\alpha}{\sqrt{K-k_0+1}},\,\forall 1\le k\le K$, we plug $\alpha_k$ into \eqref{eq:telescope3-k0} and have
\begin{align}\label{eq:ncvconstant2-tmp}
\EE\|\nabla F(\bar\vx^{(k_0,K)})\|^2 \le C_1 + 
 \frac{C_2}{K-k_0+1}\sum_{k=k_0}^K \sqrt{\EE\left[\|(\widetilde\vV^{(k-1)})^{-\frac{1}{2}}\nabla F(\vx^{(k)})\|^2\right]}.
\end{align}
Since $\widetilde v_i^{(k_0-1)}\ge c^2, \forall i\in [n]$ almost surely and $\widetilde \vv^{(k+1)}\ge \widetilde \vv^{(k)},\forall k\ge1$, it holds
\begin{align*}
    \frac{1}{K-k_0+1}\sum_{k=k_0}^K \sqrt{\EE\left[\|(\widetilde\vV^{(k-1)})^{-\frac{1}{2}}\nabla F(\vx^{(k)})\|^2\right]}
    \le & \frac{1}{c(K-k_0+1)}\sum_{k=k_0}^K \sqrt{\EE\|\nabla F(\vx^{(k)})\|^2} \\
    \le & \frac{1}{c}\sqrt{\EE \|\nabla F(\bar\vx^{(k_0,K)})\|^2},
\end{align*}
where the last inequality follows from Jensen's inequality.  
Hence, plugging the above inequality into \eqref{eq:ncvconstant2-tmp} yields
\begin{equation}\label{eq:bd-gradF-x-k0-K}
\EE \|\nabla F(\bar\vx^{(k_0,K)})\|^2 \le C_1 + \frac{C_2}{c} \sqrt{\EE \|\nabla F(\bar\vx^{(k_0,K)})\|^2},
\end{equation}
which implies $\sqrt{\EE \|\nabla F(\bar\vx^{(k_0,K)})\|^2} \le \sqrt{C_1}+\frac{C_2}{c}$. Therefore, we have the desired result from \eqref{eq:bd-gradF-x-k0-K}. 

\vspace{0.2cm}
\noindent\textbf{Setting 2:}~~When $\alpha_k = \frac{\alpha}{\sqrt{k}},\,\forall 1\le k\le K$, we have
$$\sum_{k=k_0}^{K}\alpha_{k-1}= \sum_{k=k_0}^{K}\frac{\alpha}{\sqrt{k-1}} \ge\int_{k_0-1}^K \frac{\alpha}{\sqrt{x}} dx = 2\alpha (\sqrt{K}-\sqrt{k_0-1})\ge (2-\sqrt{2})\alpha \sqrt{K},$$
and
$$\sum_{k=k_0}^{K}\alpha_k^2 = \sum_{k=k_0}^{K}\frac{\alpha^2}{k}\le \int_{k_0-1}^K \frac{\alpha^2}{x}dx = \alpha^2 \log\frac{K}{k_0-1}=\alpha^2 \log\frac{1}{\frac{1}{2}-\frac{1}{K}}\le \alpha^2 \log 4.$$
Similarly,
$$\sum_{k=k_0}^{K}\alpha_{k-1}^2 = \sum_{k=k_0}^{K}\frac{\alpha^2}{k-1}\le \frac{\alpha^2}{k_0-1} + \int_{k_0-1}^{K-1} \frac{\alpha^2}{x}dx \le \alpha^2 + \alpha^2\log\frac{K-1}{k_0-1} \le \alpha^2(1+\log 3),
$$
and for all $k\ge k_0\ge\tau+2$, 
$$\sum_{j=1}^\tau\alpha_{k-j} \le \sum_{j=1}^\tau\alpha_{k_0-j} = \sum_{j=1}^\tau\frac{\alpha}{\sqrt{k_0-j}} \le \int_{k_0-\tau-1}^{k_0-1} \frac{\alpha}{\sqrt{x}}dx = 2\alpha (\sqrt{k_0-1}-\sqrt{k_0-\tau-1}) \le \frac{2\alpha\tau}{\sqrt{k_0}}. $$
Therefore, plugging $\alpha_k = \frac{\alpha}{\sqrt{k}},\,\forall 1\le k\le K$ into \eqref{eq:telescope3-k0} and using the above inequalities, we have
\begin{align}\label{eq:telescope3-k0-ncvx-2}
\EE\|\nabla F(\bar\vx^{(k_0,K)})\|^2
&\leq C_1+ \frac{C_2 }{\sum_{k=k_0}^{K}\alpha_{k-1}}  \sum_{k=k_0}^{K}\alpha_{k-1}\sqrt{\EE\left[\|(\widetilde\vV^{(k-1)})^{-\frac{1}{2}}\nabla F(\vx^{(k)})\|^2\right]}.
\end{align}
Notice $\widetilde v_i^{(k_0-1)}\ge c^2, \forall i\in [n]$ almost surely, and also use the definition of $\bar\vx^{(k_0,K)}$ in \eqref{eq:xbar-k0-K}. We have, by Jensen's inequality,
$$\frac{1}{\sum_{k=k_0}^{K}\alpha_{k-1}}\sum_{k=k_0}^{K}\alpha_{k-1}\sqrt{\EE\left[\|(\widetilde\vV^{(k-1)})^{-\frac{1}{2}}\nabla F(\vx^{(k)})\|^2\right]} \le \frac{1}{c} \sqrt{\EE\|\nabla F(\bar\vx^{(k_0,K)})\|^2}.$$
Now by the same arguments as those in the proof of Setting 1, we obtain the desired result.
\end{proof}

\begin{remark}\label{rmk:ncvconst}
{We make a few remarks here about Theorem~\ref{thm:NCV}. (i) Note that $\EE \|(\widetilde \vv^{(k_0-1)})^{-\frac{1}{2}}\|_1\le \frac{n}{c}$. Hence, we have the ergodic sublinear convergence $O(\frac{1}{\sqrt{K-k_0+1}})$; (ii) The existence of $c>0$ such that $\widetilde v_i^{(k_0-1)}\ge c^2, \forall i\in [n]$ almost surely is a mild assumption. If $k_0$ is large, then it is likely that $\widetilde v_i^{(k_0-1)}\ge \widehat v_i^{(k_0-1)}\gtrapprox (1-\beta_2) G_{i,\infty}^2$, where $G_{i,\infty}$ is an almost-sure bound on $|\nabla_i f(\vx;\xi)|$; (iii) Suppose $K\ge 2$ and $k_0=\lceil \frac{K}{2}\rceil$. If $\alpha = O(1)$ and $\tau=o(K^{\frac{1}{4}})$, then $\frac{C_2}{c}(\sqrt{C_1}+\frac{C_2}{c}) \ll C_1$ when $K$ is large. Hence, we observe from \eqref{eq:ncvconstant2} that in this case, the delay will just slightly affect the convergence speed, and we can achieve nearly-linear speed-up.} 
\end{remark}

\section{Numerical experiments}\label{sec:numerical}
In this section, we conduct numerical experiments on the proposed algorithm APAM under both shared-memory and distributed settings.  
We compare APAM to the non-parallel and sync-parallel versions of AMSGrad, and also to the async-parallel nonadaptive SGM. Notice that AMSGrad has been shown in the literature to converge significantly faster than a non-adaptive SGM or a momentum SGM, and in addition, it performs similarly well as Adam, another popularly used adaptive SGM not guaranteed to converge. 
Both async- and sync-parallel methods are implemented in C++, using openMP for shared-memory parallelization and using MPI for distributed communication. 
They are also implemented in Python for tests with large-scale datasets, using  MPI4PY for distributed communication. Tests in subsections \ref{subsection:5.1}--\ref{subsection:5.3} are run with the C++ implementation on a Dell workstation with 32 CPU cores, 64 GB memory, and two Quadro RTX 5000 GPUs.
Tests in subsections \ref{subsection:5.4}--\ref{subsection:5.5}  are run with  the Python implementation on the same workstation.


In our tests, we use five datasets: rcv1 from LIBSVM \cite{chang2011libsvm}, MNIST \cite{lecun1998gradient},  Cifar10 \cite{krizhevsky2009learning}, CINIC10 \cite{darlow2018cinic}, and Imagenet32$\times$32 \cite{chrabaszcz2017downsampled}. Their characteristics are listed in Table~\ref{table:data}.

\begin{table}[ht]
\caption{Characteristics of the tested datasets.\vspace{-0.2cm}}\label{table:data}
\begin{center}
{\small
\begin{tabular}{ccccc}
\hline
name & train samples & test samples & features & classes \\\hline
rcv1 & 20,242 & 677,399 & 47,236 & 2\\
MNIST & 60,000 & 10,000 & 28$\times$28 & 10 \\
Cifar10 & 50,000 & 10,000  & 32$\times$32$\times$3  & 10 \\
CINIC10 & 180,000 & 90,000 &  32$\times$32$\times$3 & 10\\
Imagenet32$\times$32 & 1,281,167  & 50,000 & 32$\times$32$\times$3 & 1000 \\
\hline
\end{tabular}
}
\end{center}
\end{table}

\subsection{Performance of APAM}\label{subsection:5.1}
In this subsection, we demonstrate the convergence behavior and parallelization speed-up of APAM on solving both convex and non-convex problems. We apply APAM to solve the logistic regression (LR) problem and to train a 2-layer fully-connected neural network (NN). The LR problem is convex while the neural network training is non-convex. For the LR problem, we use the rcv1 data, and for the 2-layer NN, we use the MNIST data. We set the number of neurons in the hidden layer to $50$ in the 2-layer NN, and we use the hyperbolic tangent function as the activation. 
The initial iterates for both problems are set as the standard Gaussian. While computing a stochastic gradient, the mini-batch size is set to 64 and 32 respectively for the two problems. We set the learning rate to $\alpha_k=10^{-2},\forall\,k$ for the LR problem and $\alpha_k=5\times 10^{-4},\forall\, k$ for the 2-layer NN training. The weight parameters are set to $\beta_1=0.9$ and $\beta_2=0.999$ in this test and also all the other tests.

For both problems, we report the wall-clock time and prediction accuracy on the testing data. In addition, we report the objective error, i.e., the distance of the objective value to the optimal value, for the convex LR problem and the training accuracy for the non-convex 2-layer NN problem. The results for the LR problem are shown in Fig.~\ref{fig:APAM_rcv1} and those for the 2-layer NN training in Fig.~\ref{fig:APAM_DNN2_mnist}. From the results, we see that the convergence speed of APAM, measured by the objective error or prediction accuracy versus epoch number, keeps almost the same when the number of threads used for the shared-memory parallel computing changes. This observation indicates that the convergence speed of APAM is just slightly affected by the information delay. For both problems, over 16x speed-up is achieved in terms of the running time while 32 threads are used. 

\begin{figure}[ht]
\caption{running time, objective errors and testing accuracy by APAM with openMP for logistic regression on rcv1 dataset.\vspace{-0.1cm}}
    \label{fig:APAM_rcv1}
    \centering
{\small\begin{tabular}{|c|cccccc|}
\hline
\#thread & 1 & 2 & 4 & 8 & 16 & 32\\\hline
time (sec) & 321.0 & 168.1 & 88.3 & 48.3 & 27.6 & 19.7 \\\hline
\end{tabular}
}\\
    \includegraphics[width=0.3\textwidth]{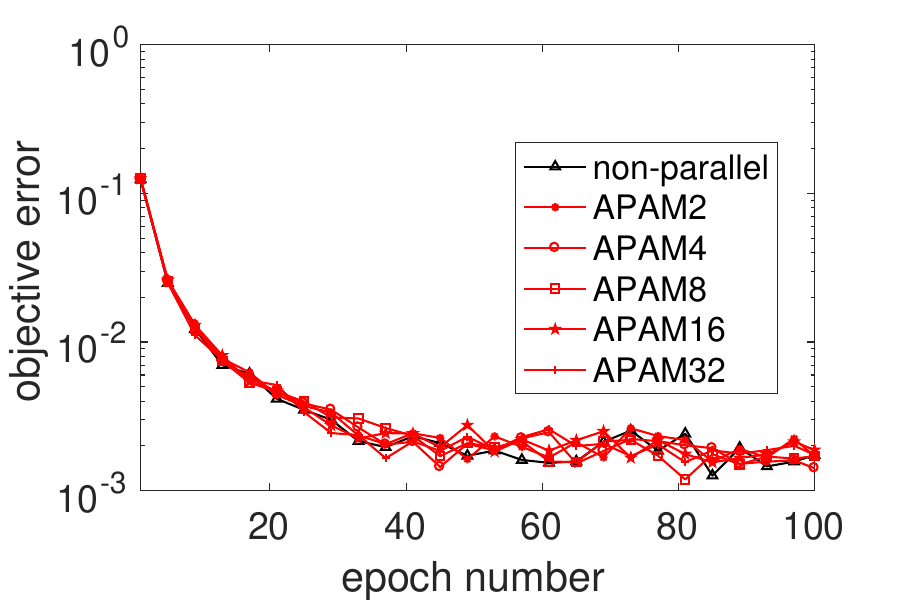}
    \includegraphics[width=0.3\textwidth]{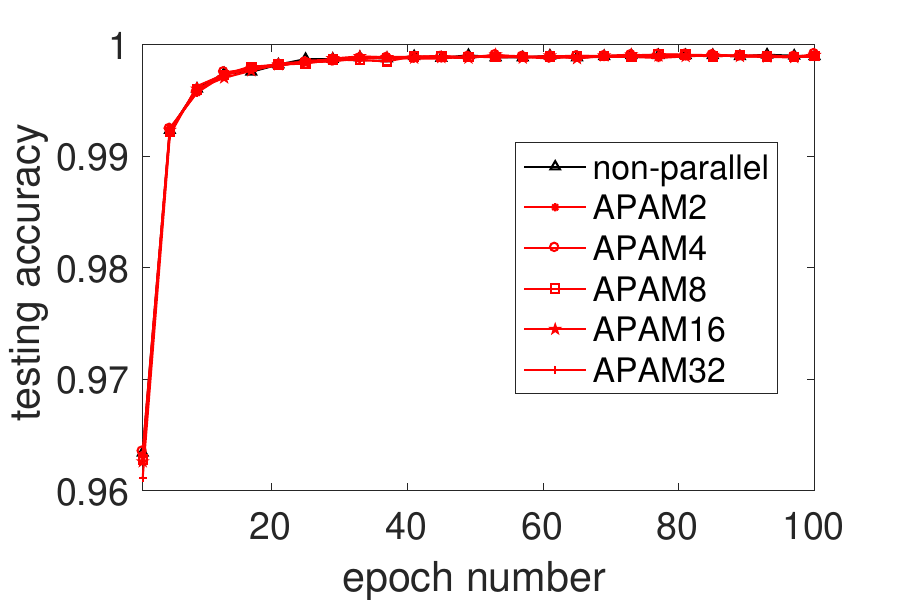}
\end{figure}

\begin{figure}[ht]
\caption{running time, training accuracy, and testing accuracy by APAM with openMP for learning a 2-layer fully-connected neural network on MNIST dataset.\vspace{-0.1cm}}
    \label{fig:APAM_DNN2_mnist}
    \centering
    {\small\begin{tabular}{|c|cccccc|}
\hline
\#thread & 1 & 2 & 4 & 8 & 16 & 32\\\hline
time (sec) & 2396 & 1259 & 658 & 361 & 201 & 134 \\\hline
\end{tabular}
}\\
    \includegraphics[width=0.3\textwidth]{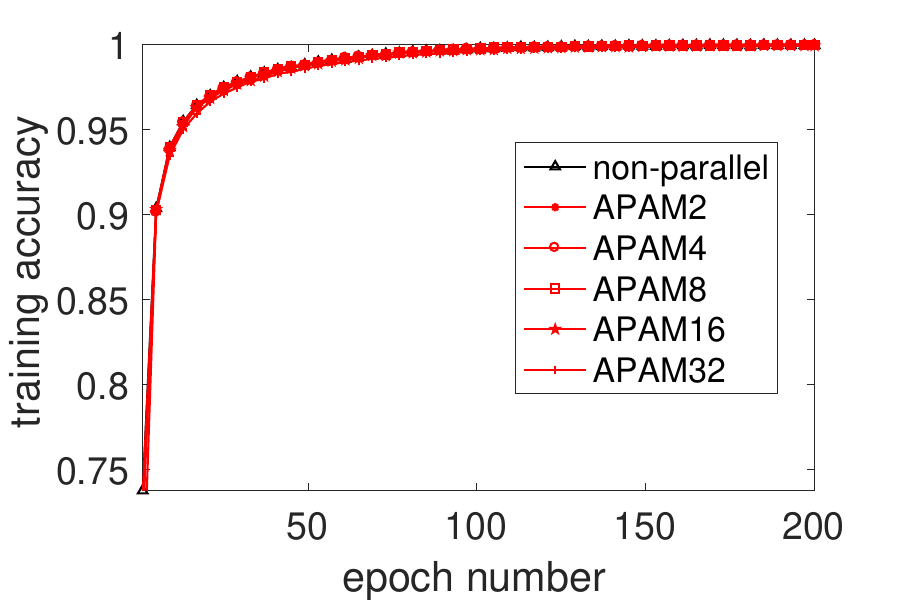}
    \includegraphics[width=0.3\textwidth]{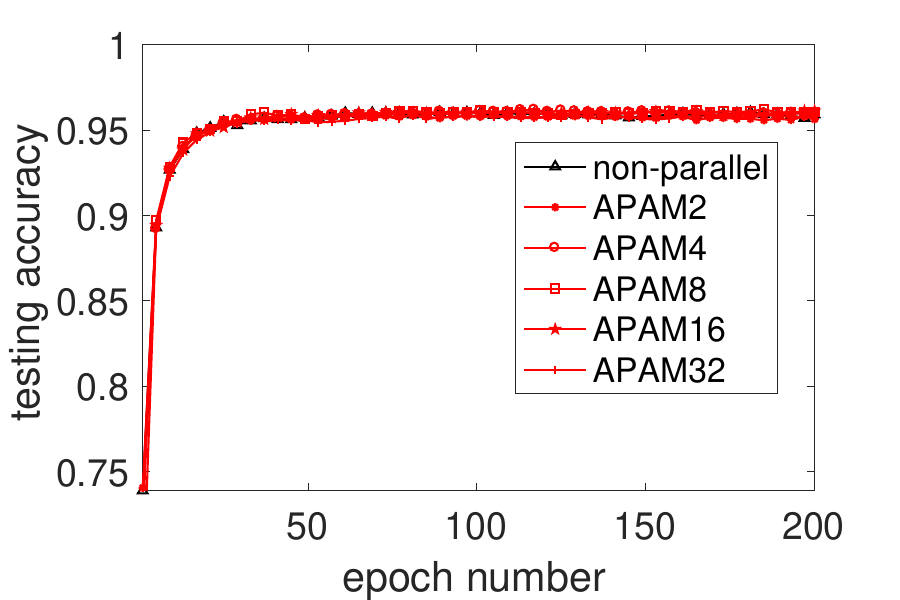}
\end{figure}

\subsection{Comparison to the nonadaptive SGM} 
In this subsection, we compare APAM to the async-parallel nonadaptive SGM. The latter method is implemented in a way similar to how we implement APAM but with a nonadaptive update. We test the two methods on training the 2-layer neural network in the previous subsection and the LeNet5 network \cite{lecun1998gradient} by using the MNIST dataset. LeNet5 has 2 convolutional,  2 max-pooling, and 3 fully-connected layers. For the 2-layer network, we use openMP shared-memory parallelism on the two methods. The mini-batch size in computing a stochastic gradient is set to 32 for both methods. The parameters of APAM are set the same as those in the previous subsection, and the learning rate of the nonadaptive SGM is tuned to $10^{-3}$ for the highest testing accuracy. For the LeNet5 network, we conduct distributed computing with MPI. The mini-batch size is set to 40 for both methods, and the learning rate is tuned to $10^{-4}$ and $10^{-3}$ respectively for APAM and the async-parallel nonadaptive SGM, for the highest testing accuracy.

For the openMP implementation, we compare the performance of the two methods by running them with 1, 8, or 32 threads. For the MPI implementation, we compare their performance with one master process and 1, 5, or 20 worker processes. When one thread or one worker is used, the methods become nonparallel. Both methods are run to 200 epochs. The results are shown in Fig.~\ref{fig:sgd_APAM_DNN2_mnist} for the openMP implementation and in Fig.~\ref{fig:sgd_APAM_LeNet_mnist} for the MPI implementation. Because the nonadaptive SGM update is cheaper than the APAM update, we plot the accuracy versus the running time. From the convergence curves, we see that the convergence speed of both APAM and the async-parallel nonadaptive SGM is slightly affected by the information delay. In addition, we see that APAM gives significantly higher training and testing accuracy than the nonadaptive SGM within the same amount of running time.

\begin{figure}[ht]
\caption{training and testing accuracy by APAM and the async-parallel nonadaptive SGM with openMP for a 2-layer fully-connected neural network on MNIST.\vspace{-0.2cm}}
    \label{fig:sgd_APAM_DNN2_mnist}
    \centering
    \includegraphics[width=0.3\textwidth]{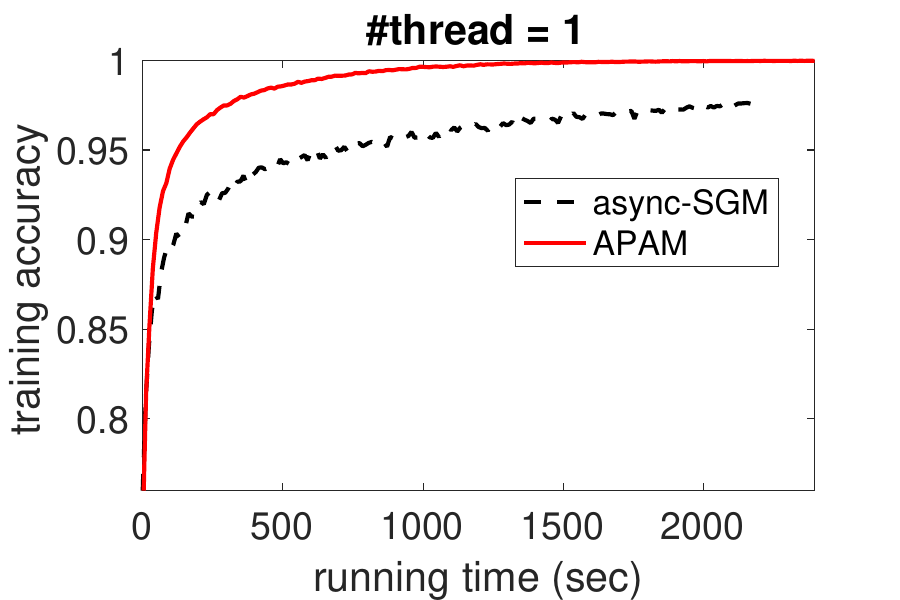}
    \includegraphics[width=0.3\textwidth]{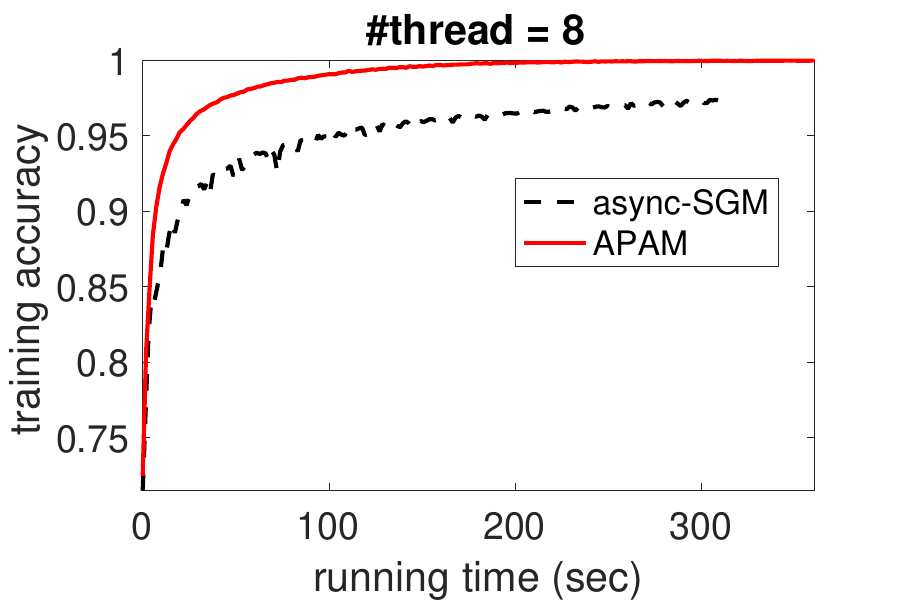}
    \includegraphics[width=0.3\textwidth]{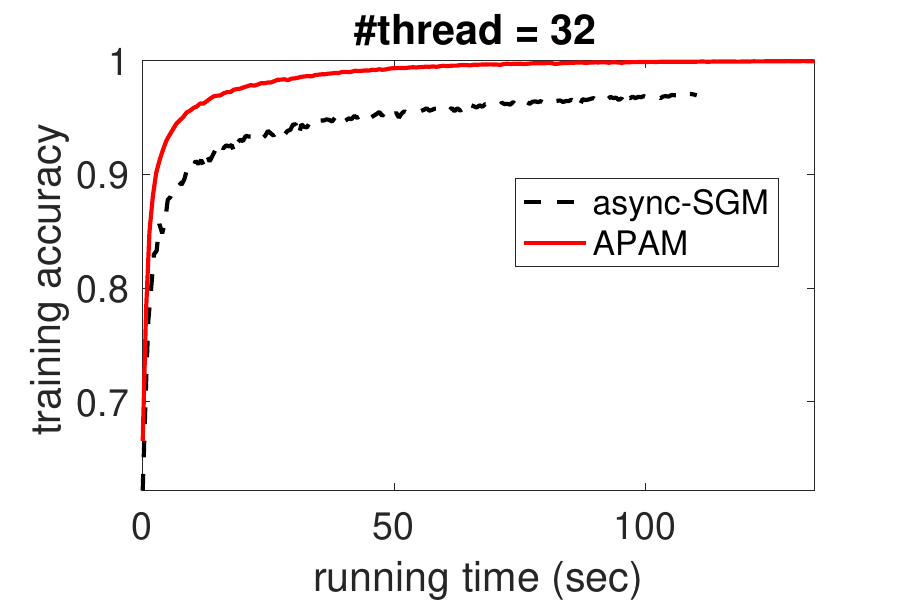}\\
    \includegraphics[width=0.3\textwidth]{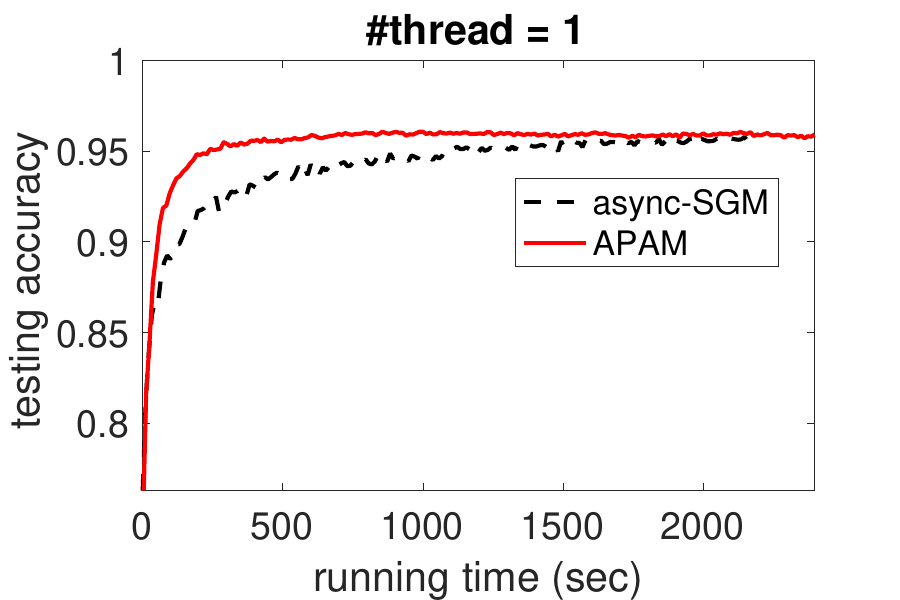}
    \includegraphics[width=0.3\textwidth]{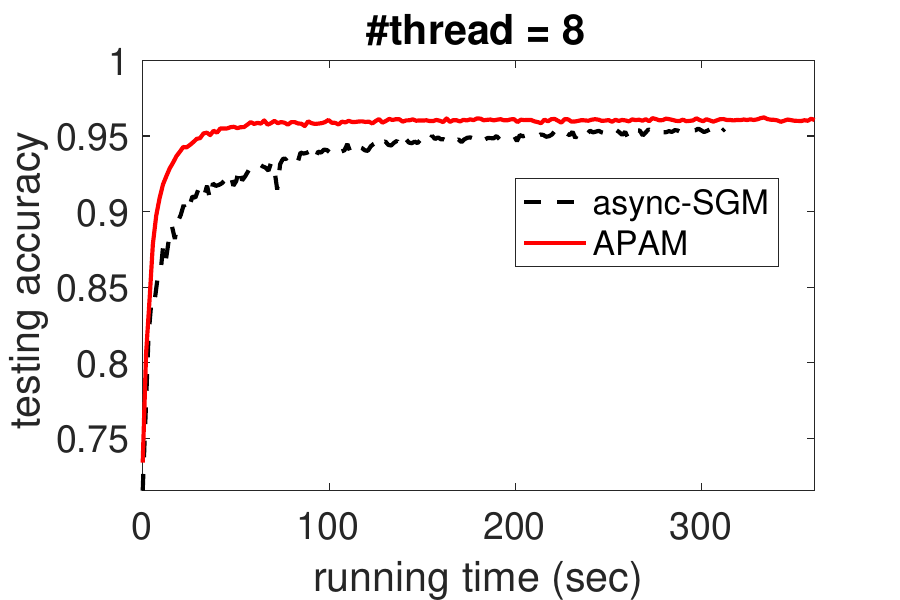}
    \includegraphics[width=0.3\textwidth]{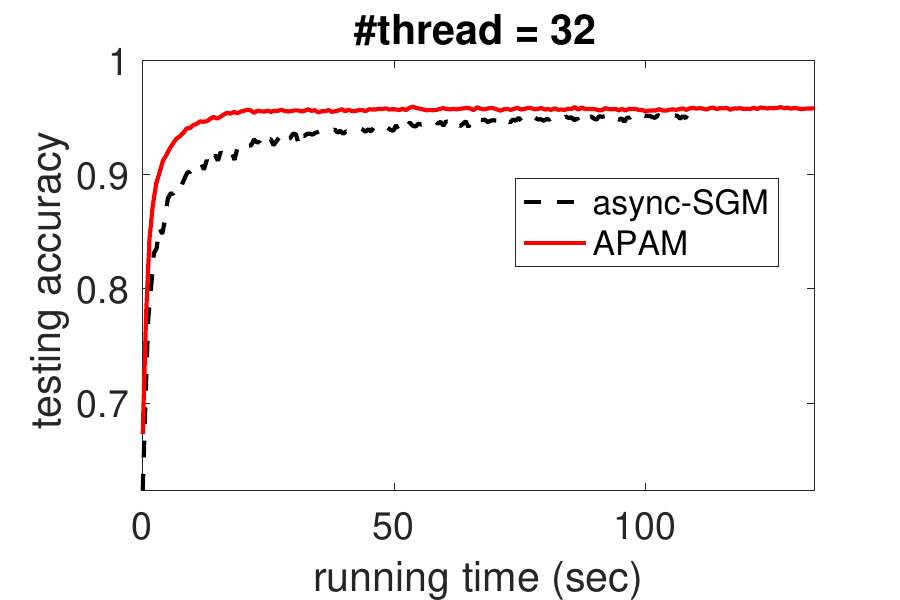}
\end{figure}

\begin{figure}[ht]
\caption{training and testing accuracy by APAM and the async-parallel nonadaptive SGM with MPI for the LeNet5 neural network on MNIST dataset.\vspace{-0.1cm}}
    \label{fig:sgd_APAM_LeNet_mnist}
    \centering
    \includegraphics[width=0.3\textwidth]{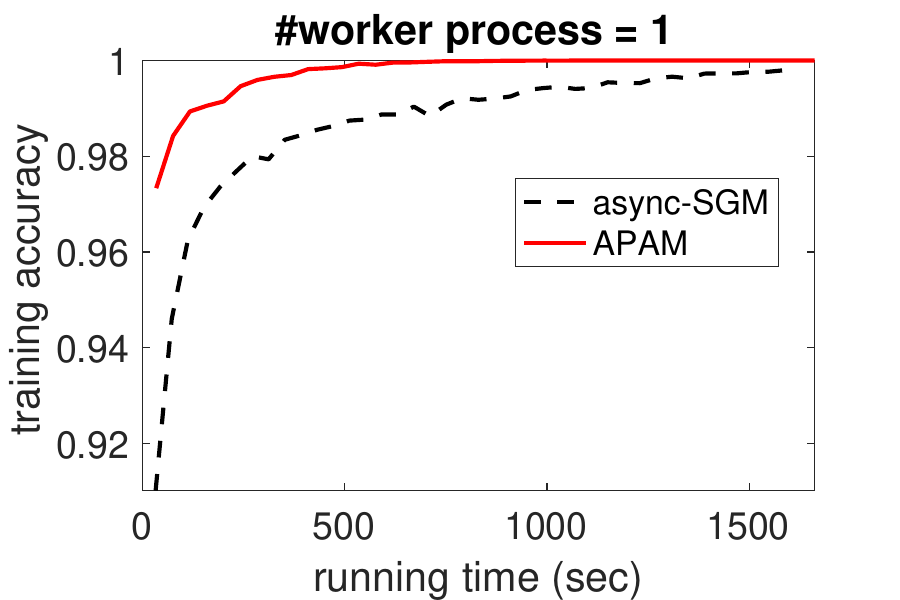}
    \includegraphics[width=0.3\textwidth]{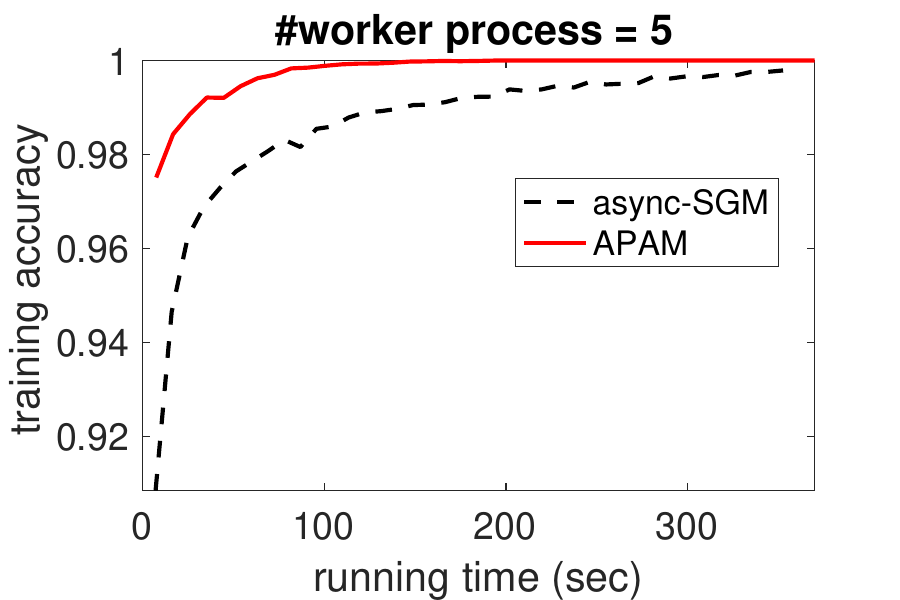}
\includegraphics[width=0.3\textwidth]{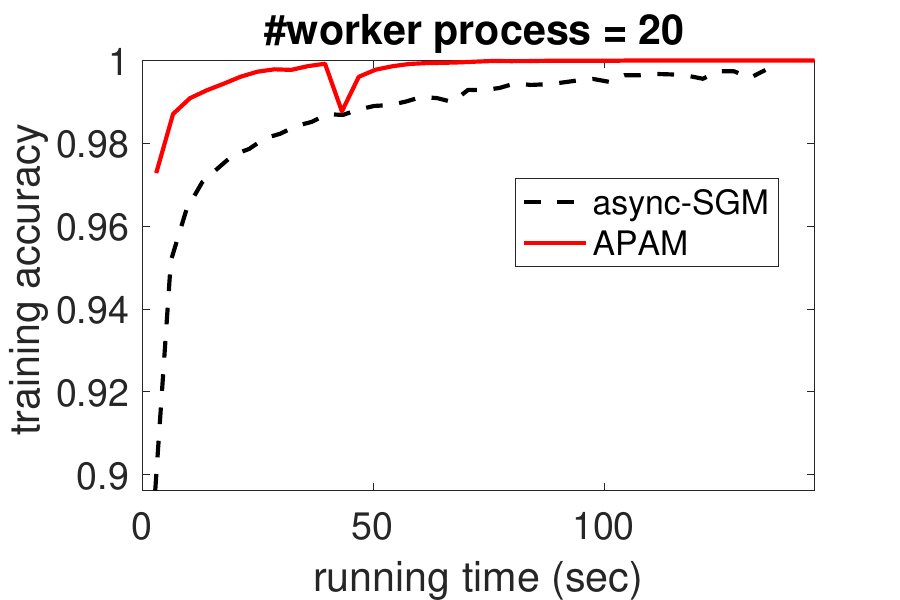}\\
    \includegraphics[width=0.3\textwidth]{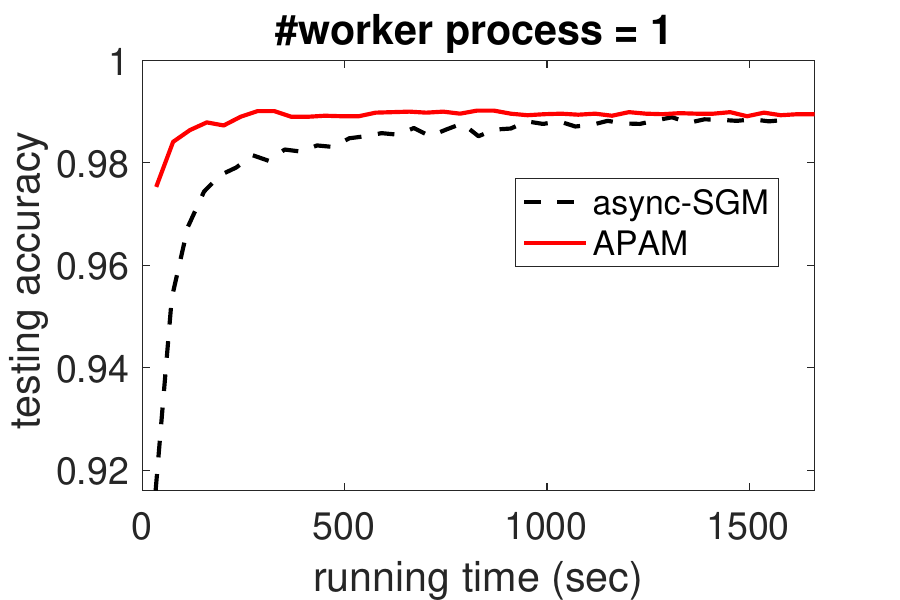}
        \includegraphics[width=0.3\textwidth]{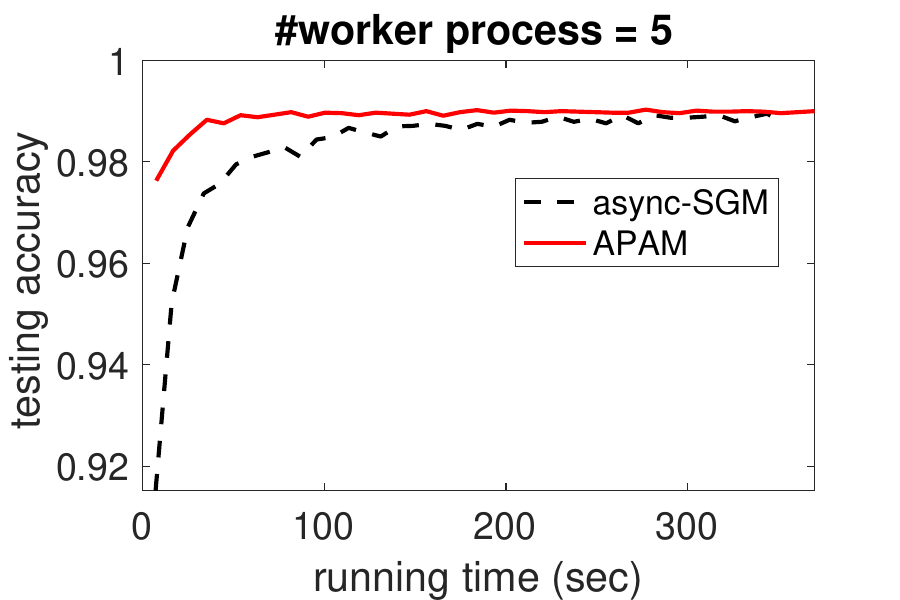}
        \includegraphics[width=0.3\textwidth]{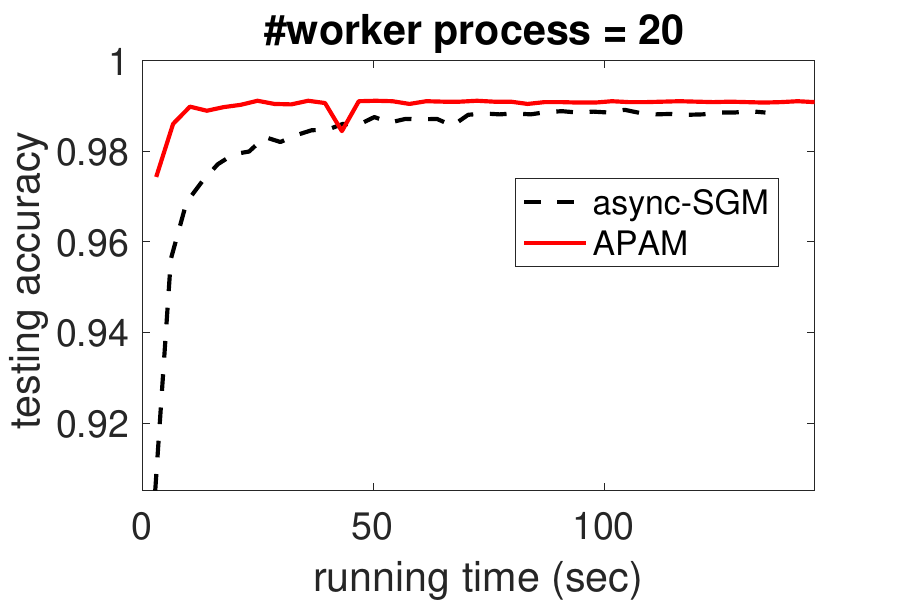}
\end{figure}

\subsection{Comparison to sync-parallel method}\label{subsection:5.3}
In this subsection, we compare APAM to its sync-parallel counterpart. We test them on training the 2-layer neural network used in the previous two subsections and on training the AllCNN network in \cite{springenberg2014striving} without data augmentation. The MNIST data is used for the 2-layer network and Cifar10 for the AllCNN network.  
AllCNN has 9 convolutional layers. The parameters of APAM and its sync-parallel counterpart are set to the same values. On training the 2-layer network, we adopt the same parameter settings as those in the previous two subsection. On the AllCNN, we conduct distributed computing with MPI. We set the mini-batch size to 40 and tune the learning rate to $\alpha_k=10^{-4},\forall\, k$ based on testing accuracy.

The running time for the 2-layer network is shown in Fig.~\ref{fig:comp-sync} and the results for the AllCNN in Fig.~\ref{tab:comp-sync-mpi}. Fig.~\ref{fig:APAM_DNN2_mnist} shows that on the 2-layer network, APAM gives almost the same accuracy curves while the number of threads used in the training changes. Hence, the results in Fig.~\ref{fig:APAM_DNN2_mnist} and Fig.~\ref{fig:comp-sync} indicate that APAM can achieve significantly higher parallelization speed-up than its sync-parallel counterpart to reach the same training/testing accuracy, especially when 16 or 32 threads are used. The sync-parallel method achieves lower paralleization speed-up by using 32 threads than that by using 8 or 16 threads. This is possibly because of memory congestion. For the AllCNN, we see that in the beginning, APAM produces lower accuracy as more worker processes are used, and this should be because the delay slows down the convergence speed. However, to reach the final highest accuracy, APAM with different number of worker processes takes almost the same number of epochs. Therefore, the results indicate that APAM again has significantly higher speed-up than its sync-parallel counterpart to reach the highest training/testing accuracy, especially when 10 or 20 worker processes are used.

\begin{figure}[ht]
    \caption{running time (hour) of APAM and the sync-parallel AMSGrad with openMP by different number of threads for training a 2-layer fully-connected network on MNIST dataset. The training/testing accuracy results are shown in Fig.~\ref{fig:APAM_DNN2_mnist}.\vspace{-0.1cm}}
    \label{fig:comp-sync}
    \centering
    \includegraphics[width=0.35\textwidth]{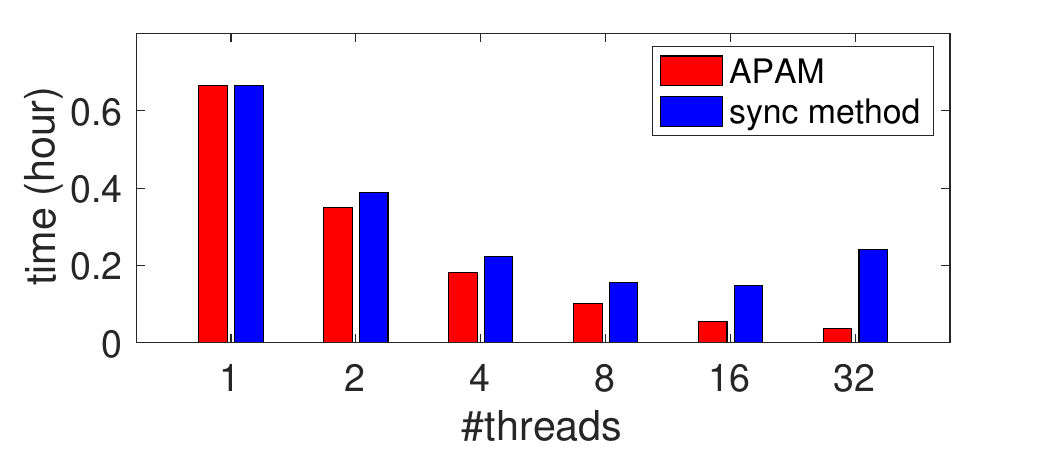}
\end{figure}

\begin{figure}[ht]
\caption{running time (hour) and prediction accuracy by APAM and the sync-parallel AMSGrad with MPI implementation for training the AllCNN network without data augmentation on Cifar10 dataset.\vspace{-0.1cm}}
    \label{tab:comp-sync-mpi}
    \centering
\includegraphics[width=0.35\textwidth]{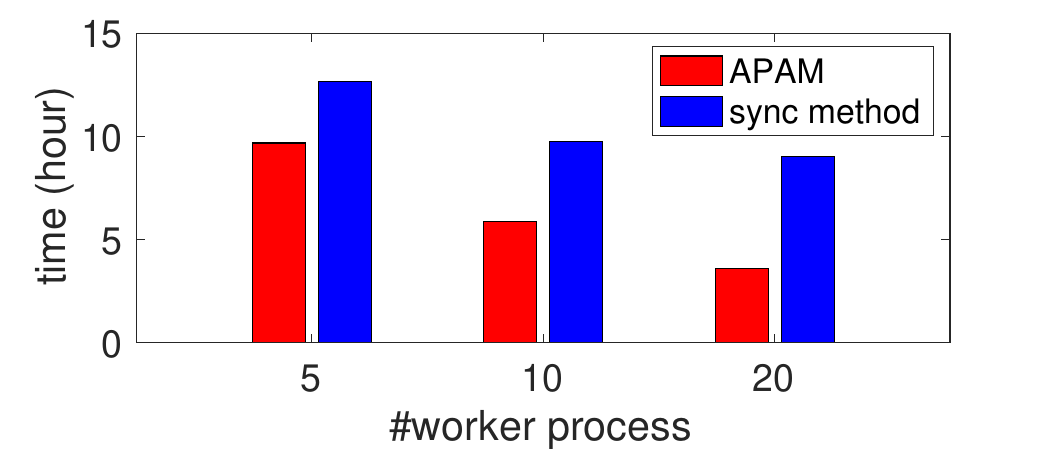}\\
\includegraphics[width=0.3\textwidth]{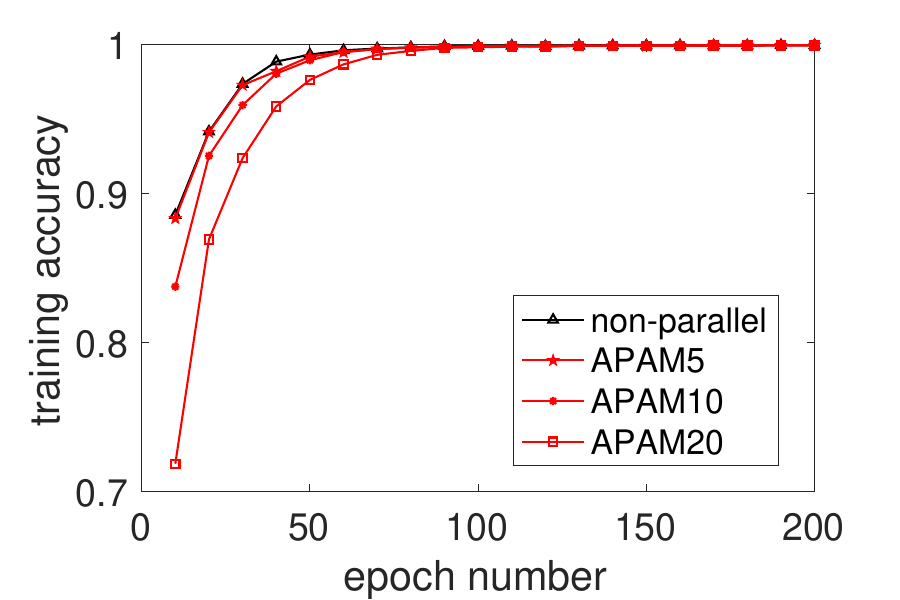}
\includegraphics[width=0.3\textwidth]{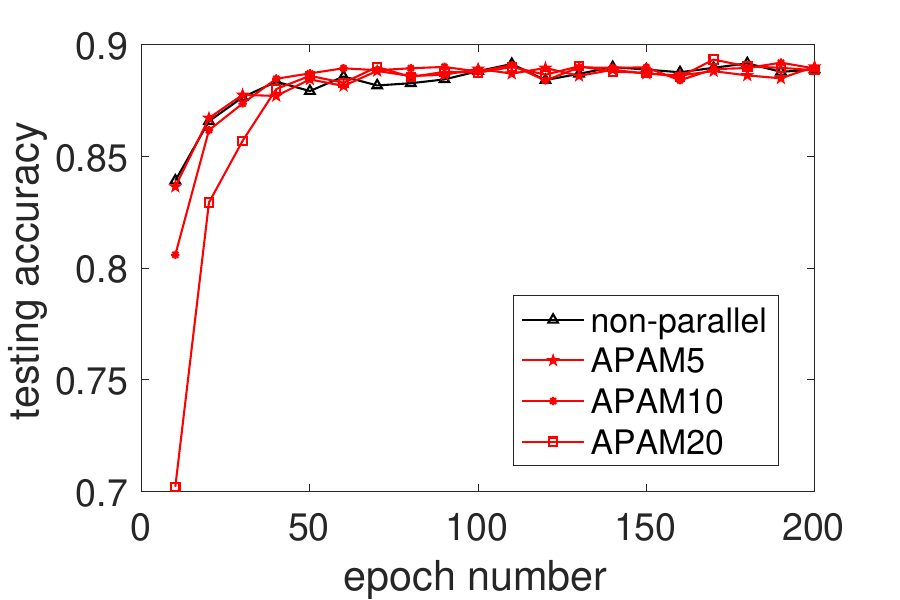}
\end{figure}

\subsection{Results with artificial delay}\label{subsection:5.4}
In this subsection, we test the effect of the delay in our algorithm APAM, by artificially injecting different delays like \cite{assran2020asynchronous}.
For a given maximum delay $\tau$, 
we artificially select the delay $\tau_k$ in iteration $k$ from $\{0,1,...,\min\{\tau,k\}\}$ uniformly at random, i.e., the stochastic gradient $\vg^{(k)}$ is evaluated at an iterate that is selected from $\{\vx^{(k)}, \vx^{(k-1)}, ..., \vx^{(k-\min\{\tau,k\})}\}$ uniformly at random.
We test APAM on training the AllCNN network on Cifar10 with different maximum delays and the same parameter settings as those in the previous subsection.

Fig.~\ref{fig:artificial_delay} plots the curves for different values of $\tau$. 
In all cases, APAM can converge to almost the same final highest accuracy. 
When $\tau \le 20$, APAM takes almost the same number of epochs to reach the final highest accuracy as the no-delay case. 
When $\tau \ge 50$, the negative effect of delay on the convergence becomes obvious. APAM with a larger maximum delay converges slower and needs more epochs to achieve the final highest accuracy. APAM with the maximum delay of 200 converges the slowest but it can still achieve the final highest accuracy after about 300 epochs.

\begin{figure}[htbp]
\caption{predication accuracy by APAM for training the AllCNN network on Cifar10 dataset with Python implementation and artificial delay. \vspace{-0.1cm}}
    \label{fig:artificial_delay}
    \centering 
\includegraphics[width=0.3\textwidth]{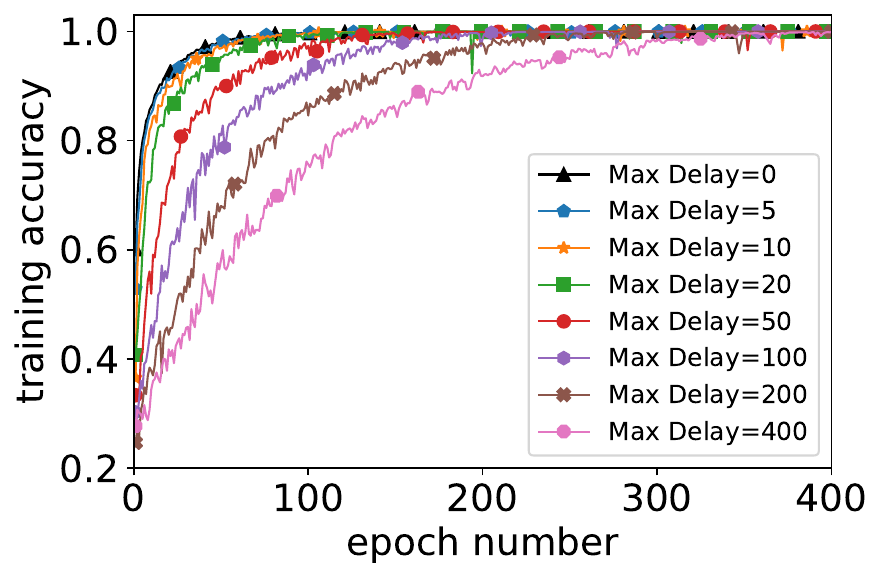}
\includegraphics[width=0.3\textwidth]{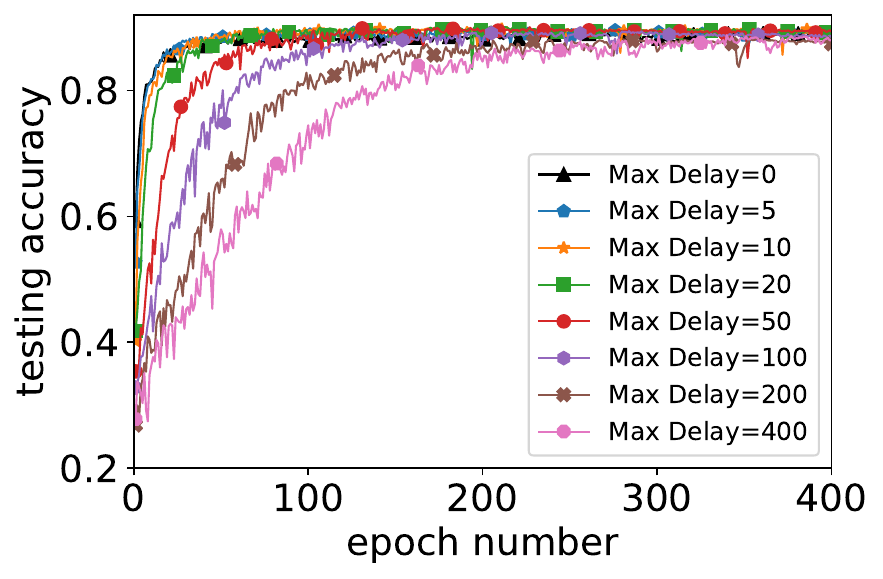}
\end{figure}
 
\subsection{Tests on larger datasets}\label{subsection:5.5}
In this subsection, we test APAM and its sync-parallel counterpart on larger neural networks and larger datasets.
We train two networks: Resnet18 \cite{he2016deep} that is a deep residual network with 18 convolutional layers, and WRN-28-5 \cite{zagoruyko2016wide} that is a wide residual network with 28 convolutional layers and whose widening factor is 5. 
 Resnet18 is used for classifying the CINIC10 data, with the mini-batch size set to 80 and the learning rate tuned to $10^{-4}$.
 WRN-28-5 is used for classifying the Imagenet32$\times$32 data, with the mini-batch size set to 100 and the learning rate tuned to $10^{-3}$. 

 The training is first run on CPUs to compare the time of APAM and its sync counterpart. 
 Because of the problem size, it takes very long time for one update, and we only run the training to one epoch. Fig~\ref{fig:cpu_time} shows the running time. 
From the figure, we see again that APAM has significantly higher speed-up than its sync-parallel counterpart. 
In more details, the running time for both trainings by APAM decreases as the number of workers increases, and it is reduced almost by a half as the workers increase from 5 to 10 and from 10 to 20. However, for the sync-parallel counterpart, the speed-up is only observed when the number of workers increases from 1 to 5, and as the number of workers further increases, it takes longer time for training Resnet18 on the CINIC10 data.

\begin{figure}[htbp]
\caption{running time (hour) on CPU by APAM and the sync-parallel AMSGrad with Python and MPI4PY implementation, for training the Resnet18 network on  CINIC10 dataset (Left) and WRN-28-5 on Imagenet32$\times$32 dataset (Right); both for one epoch.\vspace{-0.1cm}}
    \label{fig:cpu_time}
    \centering
\includegraphics[width=0.35\textwidth]{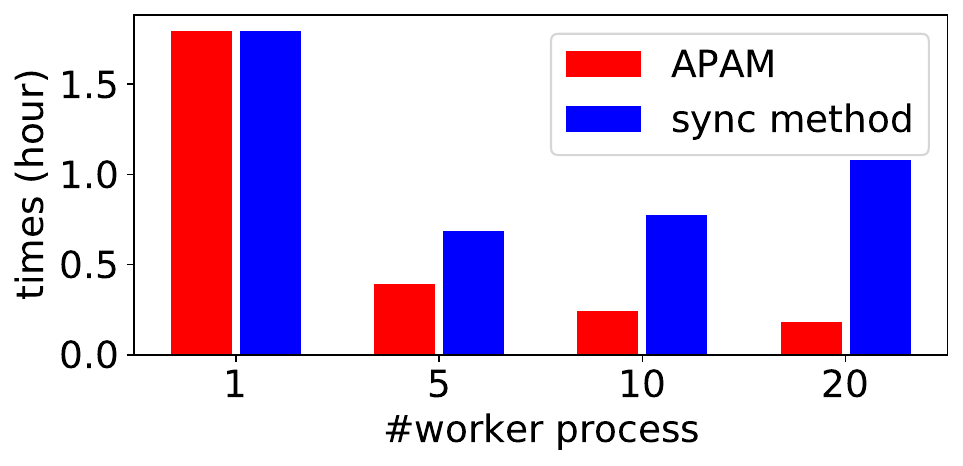}
\includegraphics[width=0.35\textwidth]{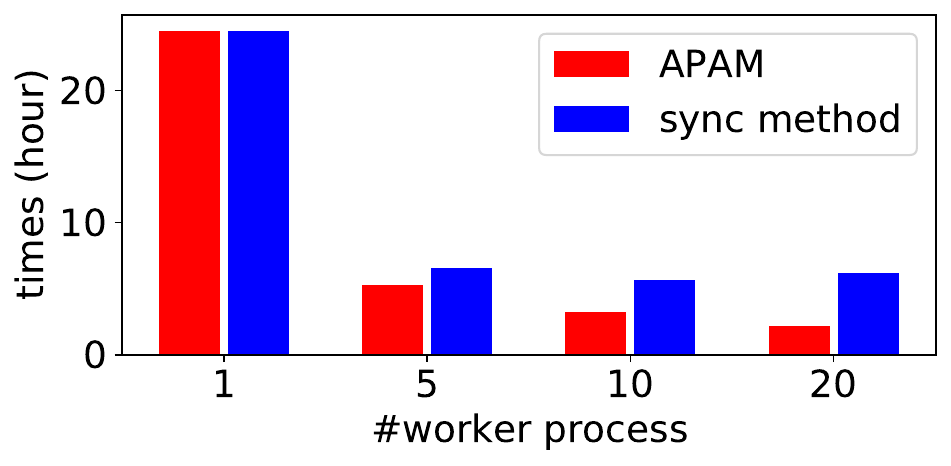}
\end{figure}

Then we run APAM with two GPUs to see how the delay affects the convergence. 
As there are only two GPUs, we assign $\lceil \frac{p}{2}\rceil$ workers on one GPU and $\lfloor \frac{p}{2}\rfloor$ workers on the other one, if $p$ workers are employed. 
Due to the memory limitation, $p$ is set up to 10. 
Fig.~\ref{fig:CINIC10} and Fig.~\ref{fig:Imagenet32} show the prediction accuracy of the training on Resnet18 and WRN-28-5 respectively. From the figures, we see that APAM produces lower accuracy for the beginning epochs as more workers are used. This indicates that the delay slows down the convergence speed. Nevertheless, APAM achieves almost the same final highest accuracy with different numbers of workers.  
It is worth to mention that to train the models on the large datasets for many epochs takes a long time, even on GPUs. The number of epochs is selected such that the training takes about one day by using both GPUs. This way, we could run to 200 epochs for training Resnet18 on CINIC10 and only 40 epochs for training WRN-28-5 on Imagenet32$\times$32.

\begin{figure}[htbp]
\caption{prediction accuracy by APAM and the sync-parallel AMSGrad with Python and MPI4PY implementation for training the Resnet18 network on CINIC10 dataset.\vspace{-0.1cm}}
    \label{fig:CINIC10}
    \centering
\includegraphics[width=0.3\textwidth]{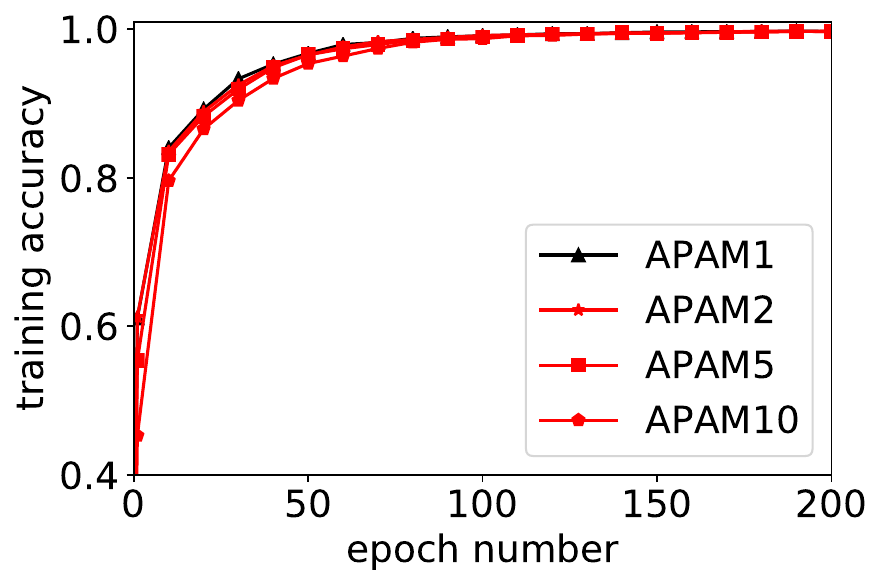}
\includegraphics[width=0.3\textwidth]{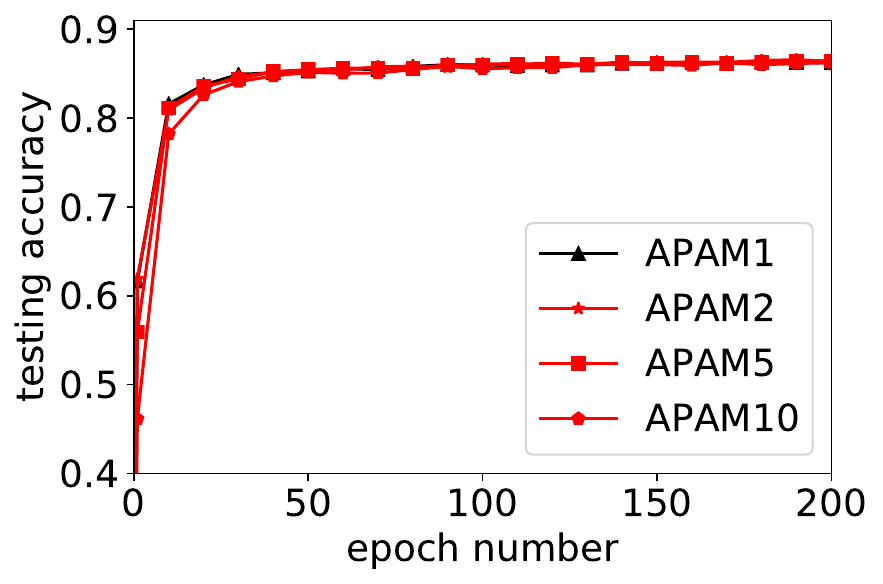}
\end{figure}

\begin{figure}[htbp]
\caption{prediction accuracy by APAM and the sync-parallel AMSGrad with Python and MPI4PY implementation for training the WRN-28-5 network on Imagenet32$\times$32 dataset. 
The first row is about the conventional accuracy that measures the proportion of images for which the predicted class (the one with the highest probability) matches the true class. The second row is about the top 5 accuracy that measures the proportion of images for which one of the five classes with top 5 highest probability matches the true class.
\vspace{-0.1cm}}
    \label{fig:Imagenet32}
    \centering 
\includegraphics[width=0.3\textwidth]{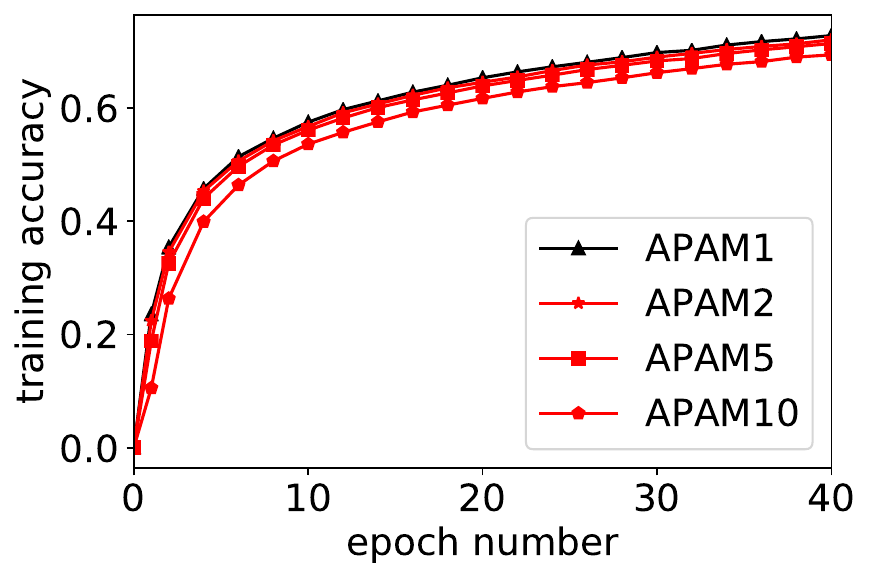}
\includegraphics[width=0.3\textwidth]{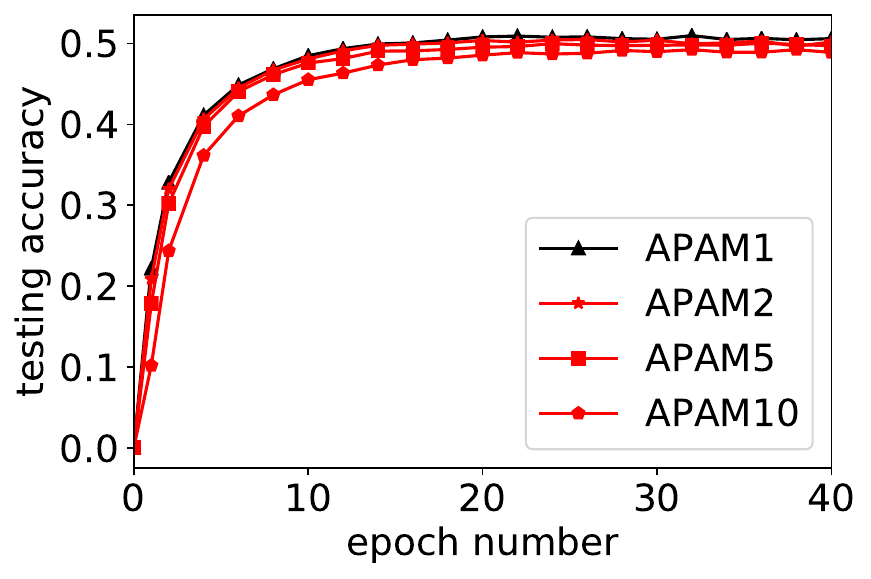}\\
\includegraphics[width=0.3\textwidth]{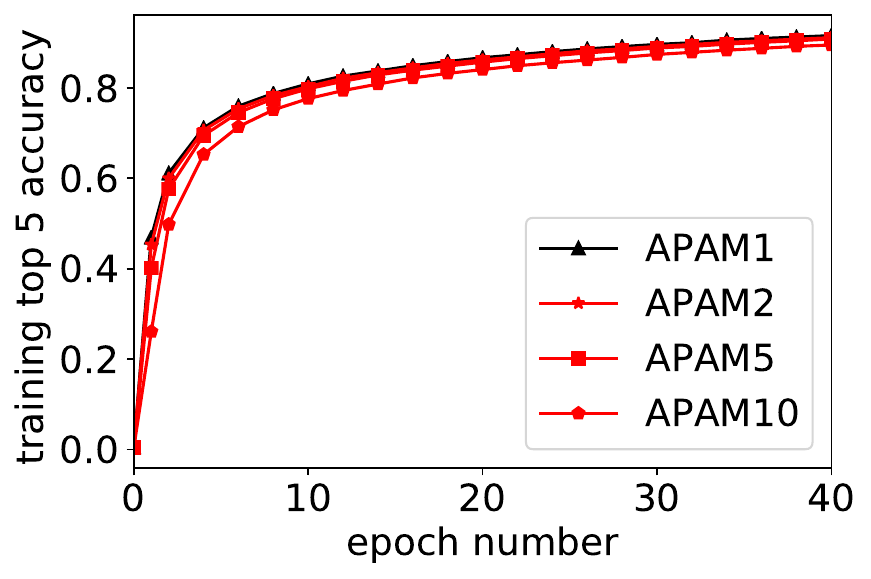}
\includegraphics[width=0.3\textwidth]{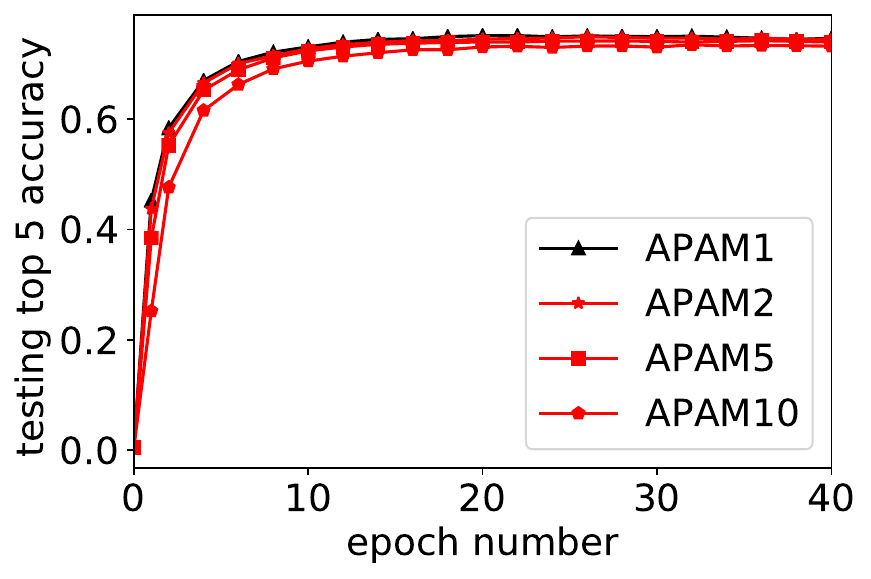}
\end{figure}

\section{Concluding remarks}\label{sec:conclusion}
We have presented an asynchronous parallel adaptive stochastic gradient method, named APAM, based on AMSGrad. Convergence rate results are established for both constrained convex and unconstrained non-convex cases. The results show that the delay has little effect on the convergence speed, if it is upper bounded by $\tau=o(K^{\frac{1}{4}})$, where $K$ is the maximum number of iterations. Numerical experiments on both convex and non-convex machine learning problems demonstrate significant advantages of the proposed method over its synchronous counterpart and also an asynchronous parallel nonadaptive method, in both shared-memory and distributed environment.

\begin{acknowledgements}
The authors would like to thank three anonymous reviewers for their valuable comments and suggestions to improve the quality of the paper and also for the careful testing on our codes. The work of Y. Xu, Y. Yan and C. Sutcher-Shepard is partly supported by NSF grant DMS-2053493 and the Rensselaer-IBM AI Research Collaboration, part of the IBM AI Horizons Network. J. Chen is supported in part by DOE Award DE-OE0000910.
\end{acknowledgements}



\begin{thebibliography}{10}

\bibitem{agarwal2011distributed}
A.~Agarwal and J.~C. Duchi.
\newblock Distributed delayed stochastic optimization.
\newblock In {\em Advances in Neural Information Processing Systems}, pages
  873--881, 2011.

\bibitem{assran2020asynchronous}
M.~S. Assran and M.~G. Rabbat.
\newblock Asynchronous gradient push.
\newblock {\em IEEE Transactions on Automatic Control}, 66(1):168--183, 2020.

\bibitem{backstrom2019mindthestep}
K.~B{\"a}ckstr{\"o}m, M.~Papatriantafilou, and P.~Tsigas.
\newblock Mindthestep-asyncpsgd: Adaptive asynchronous parallel stochastic
  gradient descent.
\newblock {\em arXiv preprint arXiv:1911.03444}, 2019.

\bibitem{bertsekas1991some}
D.~P. Bertsekas and J.~N. Tsitsiklis.
\newblock Some aspects of parallel and distributed iterative algorithms---a
  survey.
\newblock {\em Automatica}, 27(1):3--21, 1991.

\bibitem{chang2011libsvm}
C.-C. Chang and C.-J. Lin.
\newblock Libsvm: A library for support vector machines.
\newblock {\em ACM transactions on intelligent systems and technology (TIST)},
  2(3):1--27, 2011.

\bibitem{chen2018closing}
J.~Chen and Q.~Gu.
\newblock Closing the generalization gap of adaptive gradient methods in
  training deep neural networks.
\newblock {\em arXiv preprint arXiv:1806.06763}, 2018.

\bibitem{chen2018convergence}
X.~Chen, S.~Liu, R.~Sun, and M.~Hong.
\newblock On the convergence of a class of adam-type algorithms for non-convex
  optimization.
\newblock In {\em International Conference on Learning Representations}, 2019.

\bibitem{chrabaszcz2017downsampled}
P.~Chrabaszcz, I.~Loshchilov, and F.~Hutter.
\newblock A downsampled variant of imagenet as an alternative to the cifar
  datasets.
\newblock {\em arXiv preprint arXiv:1707.08819}, 2017.

\bibitem{darlow2018cinic}
L.~N. Darlow, E.~J. Crowley, A.~Antoniou, and A.~J. Storkey.
\newblock Cinic-10 is not imagenet or cifar-10.
\newblock {\em arXiv preprint arXiv:1810.03505}, 2018.

\bibitem{dean2012large}
J.~Dean, G.~Corrado, R.~Monga, K.~Chen, M.~Devin, M.~Mao, M.~Ranzato,
  A.~Senior, P.~Tucker, K.~Yang, Q.~Le, and A.~Ng.
\newblock Large scale distributed deep networks.
\newblock In {\em Advances in neural information processing systems}, pages
  1223--1231, 2012.

\bibitem{duchi2011adaptive}
J.~Duchi, E.~Hazan, and Y.~Singer.
\newblock Adaptive subgradient methods for online learning and stochastic
  optimization.
\newblock {\em Journal of Machine Learning Research}, 12(Jul):2121--2159, 2011.

\bibitem{fang2019convergence}
B.~Fang and D.~Klabjan.
\newblock Convergence analyses of online adam algorithm in convex setting and
  two-layer relu neural network.
\newblock {\em arXiv preprint arXiv:1905.09356}, 2019.

\bibitem{feyzmahdavian2016asynchronous}
H.~R. Feyzmahdavian, A.~Aytekin, and M.~Johansson.
\newblock An asynchronous mini-batch algorithm for regularized stochastic
  optimization.
\newblock {\em IEEE Transactions on Automatic Control}, 61(12):3740--3754,
  2016.

\bibitem{ghadimi2013stochastic}
S.~Ghadimi and G.~Lan.
\newblock Stochastic first-and zeroth-order methods for nonconvex stochastic
  programming.
\newblock {\em SIAM Journal on Optimization}, 23(4):2341--2368, 2013.

\bibitem{guan2017delay}
N.~Guan, L.~Shan, C.~Yang, W.~Xu, and M.~Zhang.
\newblock Delay compensated asynchronous adam algorithm for deep neural
  networks.
\newblock In {\em 2017 IEEE International Symposium on Parallel and Distributed
  Processing with Applications and 2017 IEEE International Conference on
  Ubiquitous Computing and Communications (ISPA/IUCC)}, pages 852--859. IEEE,
  2017.

\bibitem{he2016deep}
K.~He, X.~Zhang, S.~Ren, and J.~Sun.
\newblock Deep residual learning for image recognition.
\newblock In {\em Proceedings of the IEEE conference on computer vision and
  pattern recognition}, pages 770--778, 2016.

\bibitem{keskar2016large}
N.~S. Keskar, D.~Mudigere, J.~Nocedal, M.~Smelyanskiy, and P.~T.~P. Tang.
\newblock On large-batch training for deep learning: Generalization gap and
  sharp minima.
\newblock {\em arXiv preprint arXiv:1609.04836}, 2016.

\bibitem{kingma2014adam}
D.~P. Kingma and J.~Ba.
\newblock Adam: A method for stochastic optimization.
\newblock {\em arXiv preprint arXiv:1412.6980}, 2014.

\bibitem{krizhevsky2009learning}
A.~Krizhevsky, G.~Hinton, et~al.
\newblock Learning multiple layers of features from tiny images.
\newblock 2009.

\bibitem{leblond2018improved}
R.~Leblond, F.~Pedregosa, and S.~Lacoste-Julien.
\newblock Improved asynchronous parallel optimization analysis for stochastic
  incremental methods.
\newblock {\em The Journal of Machine Learning Research}, 19(1):3140--3207,
  2018.

\bibitem{lecun1998gradient}
Y.~LeCun, L.~Bottou, Y.~Bengio, and P.~Haffner.
\newblock Gradient-based learning applied to document recognition.
\newblock {\em Proceedings of the IEEE}, 86(11):2278--2324, 1998.

\bibitem{lian2015asynchronous}
X.~Lian, Y.~Huang, Y.~Li, and J.~Liu.
\newblock Asynchronous parallel stochastic gradient for nonconvex optimization.
\newblock In {\em Advances in Neural Information Processing Systems}, pages
  2737--2745, 2015.

\bibitem{lian2018asynchronous}
X.~Lian, W.~Zhang, C.~Zhang, and J.~Liu.
\newblock Asynchronous decentralized parallel stochastic gradient descent.
\newblock In {\em International Conference on Machine Learning}, pages
  3043--3052, 2018.

\bibitem{liu2014asynchronous-cd}
J.~Liu, S.~Wright, C.~R{\'e}, V.~Bittorf, and S.~Sridhar.
\newblock An asynchronous parallel stochastic coordinate descent algorithm.
\newblock In {\em International Conference on Machine Learning}, pages
  469--477, 2014.

\bibitem{liu2020distributed}
J.~Liu, C.~Zhang, et~al.
\newblock Distributed learning systems with first-order methods.
\newblock {\em Foundations and Trends{\textregistered} in Databases},
  9(1):1--100, 2020.

\bibitem{luo2019adaptive}
L.~Luo, Y.~Xiong, and Y.~Liu.
\newblock Adaptive gradient methods with dynamic bound of learning rate.
\newblock In {\em International Conference on Learning Representations}, 2019.

\bibitem{mania2017perturbed}
H.~Mania, X.~Pan, D.~Papailiopoulos, B.~Recht, K.~Ramchandran, and M.~I.
  Jordan.
\newblock Perturbed iterate analysis for asynchronous stochastic optimization.
\newblock {\em SIAM Journal on Optimization}, 27(4):2202--2229, 2017.

\bibitem{masters2018revisiting}
D.~Masters and C.~Luschi.
\newblock Revisiting small batch training for deep neural networks.
\newblock {\em arXiv preprint arXiv:1804.07612}, 2018.

\bibitem{nazari2019dadam}
P.~Nazari, D.~A. Tarzanagh, and G.~Michailidis.
\newblock Dadam: A consensus-based distributed adaptive gradient method for
  online optimization.
\newblock {\em arXiv preprint arXiv:1901.09109}, 2019.

\bibitem{nemirovski2009robust}
A.~Nemirovski, A.~Juditsky, G.~Lan, and A.~Shapiro.
\newblock Robust stochastic approximation approach to stochastic programming.
\newblock {\em SIAM Journal on optimization}, 19(4):1574--1609, 2009.

\bibitem{peng2016arock}
Z.~Peng, Y.~Xu, M.~Yan, and W.~Yin.
\newblock Arock: an algorithmic framework for asynchronous parallel coordinate
  updates.
\newblock {\em SIAM Journal on Scientific Computing}, 38(5):A2851--A2879, 2016.

\bibitem{peng2019convergence}
Z.~Peng, Y.~Xu, M.~Yan, and W.~Yin.
\newblock On the convergence of asynchronous parallel iteration with unbounded
  delays.
\newblock {\em Journal of the Operations Research Society of China},
  7(1):5--42, 2019.

\bibitem{polyak1992acceleration}
B.~T. Polyak and A.~B. Juditsky.
\newblock Acceleration of stochastic approximation by averaging.
\newblock {\em SIAM journal on control and optimization}, 30(4):838--855, 1992.

\bibitem{recht2011hogwild}
B.~Recht, C.~Re, S.~Wright, and F.~Niu.
\newblock Hogwild: A lock-free approach to parallelizing stochastic gradient
  descent.
\newblock In {\em Advances in neural information processing systems}, pages
  693--701, 2011.

\bibitem{reddi2019convergence}
S.~J. Reddi, S.~Kale, and S.~Kumar.
\newblock On the convergence of adam and beyond.
\newblock In {\em International Conference on Learning Representations}, 2018.

\bibitem{robbins1951stochastic}
H.~Robbins and S.~Monro.
\newblock A stochastic approximation method.
\newblock {\em The annals of mathematical statistics}, pages 400--407, 1951.

\bibitem{springenberg2014striving}
J.~T. Springenberg, A.~Dosovitskiy, T.~Brox, and M.~Riedmiller.
\newblock Striving for simplicity: The all convolutional net.
\newblock {\em arXiv preprint arXiv:1412.6806}, 2014.

\bibitem{sra2016adadelay}
S.~Sra, A.~W. Yu, M.~Li, and A.~Smola.
\newblock Adadelay: Delay adaptive distributed stochastic optimization.
\newblock In {\em Artificial Intelligence and Statistics}, pages 957--965,
  2016.

\bibitem{RMSprop2012}
T.~Tieleman and G.~Hinton.
\newblock {RMSP}rop: Divide the gradient by a running average of its recent
  magnitude.
\newblock {\em COURSERA: Neural Networks for Machine Learning}, 4(2):26--31,
  2012.

\bibitem{tran2019convergence}
P.~T. Tran et~al.
\newblock On the convergence proof of amsgrad and a new version.
\newblock {\em IEEE Access}, 7:61706--61716, 2019.

\bibitem{wang2020sadam}
G.~Wang, S.~Lu, Q.~Cheng, W.~Tu, and L.~Zhang.
\newblock {SA}dam: A variant of adam for strongly convex functions.
\newblock In {\em International Conference on Learning Representations}, 2020.

\bibitem{wu2018error}
J.~Wu, W.~Huang, J.~Huang, and T.~Zhang.
\newblock Error compensated quantized sgd and its applications to large-scale
  distributed optimization.
\newblock In {\em International Conference on Machine Learning}, pages
  5325--5333, 2018.

\bibitem{yang2016unified}
Y.~Yan, T.~Yang, Z.~Li, Q.~Lin, and Y.~Yang.
\newblock A unified analysis of stochastic momentum methods for deep learning.
\newblock In {\em Proceedings of the Twenty-Seventh International Joint
  Conference on Artificial Intelligence, {IJCAI-18}}, pages 2955--2961.
  International Joint Conferences on Artificial Intelligence Organization, 7
  2018.

\bibitem{zagoruyko2016wide}
S.~Zagoruyko and N.~Komodakis.
\newblock Wide residual networks.
\newblock {\em arXiv preprint arXiv:1605.07146}, 2016.

\bibitem{zhou2018convergence}
D.~Zhou, Y.~Tang, Z.~Yang, Y.~Cao, and Q.~Gu.
\newblock On the convergence of adaptive gradient methods for nonconvex
  optimization.
\newblock {\em arXiv preprint arXiv:1808.05671}, 2018.

\end{thebibliography}
\bibliographystyle{abbrv}


\appendix

\section{Proofs of lemmas in section \ref{sec:cvx}}

\textbf{Proof of Lemma \ref{lem:ineq-from-opt}}~~~~
From the update of $\vx$ in \eqref{eq:update-x}, we have the optimality condition
$$\vzero\in \cN_X(\vx^{(k+1)}) + \sqrt{\widehat\vv^{(k)}}(\vx^{(k+1)}-\vx^{(k)}) + \alpha_k \vm^{(k)},$$
where $\cN_X(\vx)$ denotes the normal cone of $X$ at $\vx$. Hence, it follows
\begin{equation}\label{eq:optimality-cond}
\left\langle \vx^{(k+1)} - \vx, \sqrt{\widehat\vv^{(k)}}(\vx^{(k+1)}-\vx^{(k)}) + \alpha_k \vm^{(k)} \right\rangle \le 0, \, \forall\, \vx \in X.
\end{equation}

By the update of $\vm$ in \eqref{eq:update-m}, it holds
\begin{align*}
&~\left\langle \vx^{(k+1)} - \vx, \vm^{(k)} \right\rangle\cr
= &~ \left\langle \vx^{(k+1)} - \vx^{(k)}, \vm^{(k)} \right\rangle + \left\langle \vx^{(k)} - \vx, \vm^{(k)} \right\rangle\cr
= & ~ \left\langle \vx^{(k+1)} - \vx^{(k)}, \vm^{(k)} \right\rangle + (1-\beta_1) \left\langle \vx^{(k)} - \vx, \vg^{(k)} \right\rangle + \beta_1 \left\langle \vx^{(k)} - \vx, \vm^{(k-1)} \right\rangle.
\end{align*}
Recursively using the above relation, we have
\begin{equation}\label{eq:expand-m-term}
\left\langle \vx^{(k+1)} - \vx, \vm^{(k)} \right\rangle = \sum_{j=1}^k\beta_1^{k-j}\left(\langle \vx^{(j+1)}-\vx^{(j)}, \vm^{(j)}\rangle + (1-\beta_1)\langle \vx^{(j)} - \vx, \vg^{(j)} \rangle\right).
\end{equation}
In addition, it holds
\begin{align*}
&~\left\langle \vx^{(k+1)} - \vx, \sqrt{\widehat\vv^{(k)}}(\vx^{(k+1)}-\vx^{(k)})\right\rangle\\
=&~\frac{1}{2}\left(\|\vx^{(k+1)} - \vx\|_{\sqrt{\widehat\vv^{(k)}}}^2-\|\vx^{(k)} - \vx\|_{\sqrt{\widehat\vv^{(k)}}}^2+\|\vx^{(k+1)} - \vx^{(k)}\|_{\sqrt{\widehat\vv^{(k)}}}^2\right).
\end{align*}
Substituting the above two equations into \eqref{eq:optimality-cond} gives
\begin{align}\label{eq:expand-m-term-2}
&~\alpha_k \sum_{j=1}^k\beta_1^{k-j}\left(\langle \vx^{(j+1)}-\vx^{(j)}, \vm^{(j)}\rangle + (1-\beta_1)\langle \vx^{(j)} - \vx, \vg^{(j)} \rangle\right) \cr
\le&~- \frac{1}{2}\left(\|\vx^{(k+1)} - \vx\|_{\sqrt{\widehat\vv^{(k)}}}^2-\|\vx^{(k)} - \vx\|_{\sqrt{\widehat\vv^{(k)}}}^2+\|\vx^{(k+1)} - \vx^{(k)}\|_{\sqrt{\widehat\vv^{(k)}}}^2\right).
\end{align}

By the Young's inequality, we have
\begin{align*}
&~\sum_{k=1}^t\alpha_k \sum_{j=1}^k\beta_1^{k-j} \big\langle \vx^{(j+1)}-\vx^{(j)}, \vm^{(j)}\big\rangle \cr
= &~ \sum_{j=1}^t\sum_{k=j}^t \alpha_k \beta_1^{k-j} \big\langle \vx^{(j+1)}-\vx^{(j)}, \vm^{(j)}\big\rangle \cr
\ge &~ \sum_{j=1}^t\left(\sum_{k=j}^t \alpha_k \beta_1^{k-j}\right)\left(-\frac{\|\vx^{(j+1)}-\vx^{(j)}\|_{(\widehat\vv^{(j)})^{\frac{1}{2}}}^2}{2 \sum_{k=j}^t \alpha_k \beta_1^{k-j}}- \frac{\sum_{k=j}^t \alpha_k \beta_1^{k-j}}{2}\|\vm^{(j)}\|_{(\widehat\vv^{(j)})^{-\frac{1}{2}}}^2\right).
\end{align*}
Since $\sum_{k=j}^t \alpha_k \beta_1^{k-j}\le \frac{\alpha_j}{1-\beta_1}$, the above inequality implies
\begin{align}
&\sum_{k=1}^t\alpha_k \sum_{j=1}^k\beta_1^{k-j} \big\langle \vx^{(j+1)}-\vx^{(j)}, \vm^{(j)}\big\rangle \nonumber\\
\ge& - \sum_{j=1}^t \left(\frac{\|\vx^{(j+1)}-\vx^{(j)}\|_{(\widehat\vv^{(j)})^{\frac{1}{2}}}^2}{2 } + \frac{\alpha_j^2}{2(1-\beta_1)^2}\|\vm^{(j)}\|_{(\widehat\vv^{(j)})^{-\frac{1}{2}}}^2\right).\label{eq:bd-1st-term}
\end{align}
In addition, noting $\widehat\vv^{(k)}\ge \widehat\vv^{(k-1)}$ for all $k$, we have
\begin{align}\label{eq:bd-right-term}
&~-\sum_{k=1}^t\left(\|\vx^{(k+1)} - \vx\|_{\sqrt{\widehat\vv^{(k)}}}^2-\|\vx^{(k)} - \vx\|_{\sqrt{\widehat\vv^{(k)}}}^2\right)\cr
= &~ \left(-\|\vx^{(t+1)} - \vx\|_{\sqrt{\widehat\vv^{(t)}}}^2 + \sum_{k=2}^t\|\vx^{(k)} - \vx\|_{\sqrt{\widehat\vv^{(k)}}-\sqrt{\widehat\vv^{(k-1)}}}^2
+\|\vx^{(1)} - \vx\|_{\sqrt{\widehat\vv^{(1)}}}^2\right)\cr
\le &~ D_\infty^2 \left(\sum_{k=2}^t\|\sqrt{\widehat\vv^{(k)}}-\sqrt{\widehat\vv^{(k-1)}}\|_1+ \|\sqrt{\widehat\vv^{(1)}}\|_1\right) = D_\infty^2 \|\sqrt{\widehat\vv^{(t)}}\|_1.
\end{align}
Now summing \eqref{eq:expand-m-term-2} over $k=1$ to $t$, and using \eqref{eq:bd-1st-term} and \eqref{eq:bd-right-term}, we obtain the desired result.

\vspace{0.2cm}

\noindent\textbf{Proof of Lemma \ref{lem:bd-m-v}}~~~~
For each $i\in [n]$, let $G_i^{(k)}=\max_{j\le k}|g_i^{(j)}|$, and $\vG^{(k)}$ be the vector with the $i$-th component $G_i^{(k)}$. Note that for each $k\ge1$ and each $i\in [n]$, we have $\widehat v_i^{(k)}=\max\{\widehat v_i^{(k-1)},\, v_i^{(k)}\}=\max_{j\le k}v_i^{(j)}$, and in addition, $v_i^{(j)}=\sum_{\jmath=1}^j(1-\beta_2)\beta_2^{j-\jmath}(g_i^{(\jmath)})^2$. Hence,
\begin{equation}\label{eq:rel-hat-vik}
\widehat v_i^{(k)} = \max_{j\le k} \sum_{\jmath=1}^j(1-\beta_2)\beta_2^{j-\jmath}(g_i^{(\jmath)})^2,
\end{equation} 
and thus $\widehat v^{(k)}_i\ge (1-\beta_2) (G_i^{(k)})^2$. Therefore, noticing
\begin{equation}\label{eq:vm-k}
\vm^{(k)}=\sum_{j=1}^k(1-\beta_1)\beta_1^{k-j}\vg^{(j)},
\end{equation}
we have
\begin{align*}
 \|\vm^{(k)}\|_{(\widehat\vv^{(k)})^{-\frac{1}{2}}} = \|\vm^{(k)}\oslash (\widehat\vv^{(k)})^{\frac{1}{4}}\|
 \le& \frac{1}{(1-\beta_2)^{\frac{1}{4}}}\|\vm^{(k)}\oslash \sqrt{\vG^{(k)}}\|  \\
 \le& \frac{1}{(1-\beta_2)^{\frac{1}{4}}}\sum_{j=1}^k(1-\beta_1)\beta_1^{k-j} \big\|\vg^{(j)}\oslash \sqrt{\vG^{(k)}}\big\|, 
\end{align*}
and thus by the Cauchy-Schwarz inequality, it holds
\begin{align*}
    \|\vm^{(k)}\|_{(\widehat\vv^{(k)})^{-\frac{1}{2}}}^2
    \le& \frac{(1-\beta_1)^2}{(1-\beta_2)^{\frac{1}{2}}}\left(\sum_{j=1}^k\beta_1^{k-j}\right)\left(\sum_{j=1}^k\beta_1^{k-j} \big\|\vg^{(j)}\oslash \sqrt{\vG^{(k)}}\big\|^2\right)\\
    \le& \frac{1-\beta_1}{(1-\beta_2)^{\frac{1}{2}}}\sum_{j=1}^k\beta_1^{k-j} \big\|\vg^{(j)}\oslash \sqrt{\vG^{(k)}}\big\|^2.
\end{align*}
Now note that
$$\big\|\vg^{(j)}\oslash \sqrt{\vG^{(k)}}\big\|^2 = \sum_{i=1}^n\frac{|g_i^{(j)}|^2}{G_i^{(k)}}\le \sum_{i=1}^n |g_i^{(j)}| = \|\vg^{(j)}\|_1.$$
Together from the above two inequalities and Assumption~\ref{assump:bound-grad}, it follows that
$$\EE \|\vm^{(k)}\|_{(\widehat\vv^{(k)})^{-\frac{1}{2}}}^2 \le \frac{1-\beta_1}{(1-\beta_2)^{\frac{1}{2}}}\sum_{j=1}^k\beta_1^{k-j}\EE\|\vg^{(j)}\|_1 \le \frac{1-\beta_1}{(1-\beta_2)^{\frac{1}{2}}}\sum_{j=1}^k\beta_1^{k-j}G_1,$$
which implies the result in \eqref{eq:bd-m-v}.

\vspace{0.2cm}

\noindent\textbf{Proof of Lemma \ref{lem:nonexpansive}}~~~~Since $X$ is separable and $x_i^{(k+1)}=x_i^{(k)}$ if $\widehat v_i^{(k)}=0$, we have
$\vx^{(k+1)}=\Proj_{X}\left(\vx^{(k)}-\alpha_{k} \vm^{(k)}\oslash\sqrt{\widehat\vv^{(k)}} \right).$
In addition, notice that $\vx^{(k)} = \Proj_{X}\left(\vx^{(k)}\right)$, 
and thus the desired result follows from the non-expansiveness of the projection onto a convex set. 

\vspace{0.2cm}
\noindent\textbf{Proof of Lemma \ref{lem:bd-mg-v}}~~~~For each $i\in [n]$, let $G_i^{(k)}=\max_{j\le k}|g_i^{(j)}|$, and $\vG^{(k)}$ be the vector with the $i$-th component $G_i^{(k)}$. Then it follows from \eqref{eq:rel-hat-vik} that $\widehat v^{(k)}_i\ge (1-\beta_2) (G_i^{(k)})^2$. Hence, for $j\le k$,
$$\|\vg^{(j)}\oslash\sqrt{\widehat\vv^{(k)}}\|^2 \le \frac{1}{1-\beta_2}\sum_{i=1}^n \frac{(g_i^{(j)})^2}{(G_i^{(k)})^2}\le \frac{\|\vg^{(j)}\|_0}{1-\beta_2},$$
which gives \eqref{eq:bd-m-divide-v-1}.  
Furthermore, by \eqref{eq:vm-k}, it holds
$$\big\|\vm^{(k)}\oslash\sqrt{\widehat\vv^{(k)}}\big\|\le \sum_{j=1}^k(1-\beta_1)\beta_1^{k-j} \big\|\vg^{(j)}\oslash\sqrt{\widehat\vv^{(k)}}\big\|
\le \sum_{j=1}^k(1-\beta_1)\beta_1^{k-j} \frac{\sqrt{\|\vg^{(j)}\|_0}}{\sqrt{1-\beta_2}}, 
$$
which proves \eqref{eq:bd-m-divide-v-2}. The above inequality together with the Cauchy-Schwarz inequality implies \eqref{eq:bd-m-divide-v-3}. Hence, we complete the proof.

\section{Proofs of lemmas in section \ref{sec:ncvx}}

\textbf{Proof of Lemma \ref{lem:indbnd}}~~~~From \eqref{eq:vm-k}, we have $m^{(k)}_i=(1-\beta_1)\sum_{j=1}^k\beta_1^{k-j} g^{(j)}_i$, and thus applying triangle inequality and using the definition of $\vGam$ in \eqref{eq:fgbound} lead to $| m^{(k)}_i|\leq (1-\beta_1^k)\Gamma_i\le \Gamma_i$. A similar argument gives $\widehat v^{(k)}_i \leq  \Gamma_i^2.$ 
When Assumption~\ref{assump:bound-grad2} holds, we know that $\|\vg^{(k)}\|_\infty\le G_\infty$ almost surely, and that $\|\nabla F(\vx^{(k)})\|_\infty\le G_\infty,$ for all $k\in[K],$  which leads to the second part of this lemma.

\vspace{0.2cm}
\noindent\textbf{Proof of Lemma \ref{lem:zz}}~~~~From \eqref{defzk}, we have that for $k\geq 1$,
\begin{align*}
\vz^{(k+1)}-\vz^{(k)}&= \frac{1}{1-\beta_1}(\vx^{(k+1)}-\vx^{(k)})-\frac{\beta_1}{1-\beta_1}(\vx^{(k)}-\vx^{(k-1)}) \nonumber\\
&= -\frac{1}{1-\beta_1}\alpha_k(\widetilde{\vV}^{(k)})^{-\frac{1}{2}}\vm^{(k)} +\frac{\beta_1}{1-\beta_1} \alpha_{k-1}(\widetilde{\vV}^{(k-1)})^{-\frac{1}{2}}\vm^{(k-1)},
\end{align*}
where in the second equation, we have used \eqref{eq:update-x-unc} and Remark~\ref{rm:vtilde}. With $k=1$, the above equation gives \eqref{zzatone} by utilizing the update of $\vm^{(1)}$. Furthermore, it gives, by plugging the update of $\vm^{(k)}$,
\begin{align*}
&~\vz^{(k+1)}-\vz^{(k)}\\
=&~\frac{-1}{1-\beta_1}\alpha_k(\widetilde{\vV}^{(k)})^{-\frac{1}{2}}(\beta_1\vm^{(k-1)}+(1-\beta_1)\vg^{(k)}) +\frac{\beta_1}{1-\beta_1} \alpha_{k-1}(\widetilde{\vV}^{(k-1)})^{-\frac{1}{2}}\vm^{(k-1)}  \\
=&~\frac{\beta_1}{1-\beta_1}\left[\alpha_{k-1}(\widetilde\vV^{(k-1)})^{-\frac{1}{2}}-\alpha_k(\widetilde\vV^{(k)})^{-\frac{1}{2}}\right]\vm^{(k-1)}-\alpha_k(\widetilde\vV^{(k)})^{-\frac{1}{2}}\vg^{(k)} \\
=&~\frac{\beta_1}{1-\beta_1}\left[\vI - \alpha_k(\widetilde\vV^{(k)})^{-\frac{1}{2}}  \alpha_{k-1}^{-1}(\widetilde\vV^{(k-1)})^{\frac{1}{2}}  \right]\alpha_{k-1}(\widetilde{\vV}^{(k-1)})^{-\frac{1}{2}}\vm^{(k-1)}-\alpha_k(\widetilde\vV^{(k)})^{-\frac{1}{2}}\vg^{(k)}.
\end{align*}
The second equation of the above is exactly \eqref{zzandm}, and the last equation gives \eqref{zzandxx}.

\vspace{0.2cm}
\noindent\textbf{Proof of Lemma \ref{lem:terms1}}~~~~Inner producting $\nabla F(\vx^{(k)})$ with both sides of \eqref{zzandm} gives
\begin{equation}\label{eq:term1split}
\begin{aligned}
&~\nabla F(\vx^{(k)})^{\top}(\vz^{(k+1)}-\vz^{(k)})\\
= &~
\frac{\beta_1}{1-\beta_1}\nabla F(\vx^{(k)})^{\top}\left[\alpha_{k-1}(\widetilde\vV^{(k-1)})^{-\frac{1}{2}}-\alpha_k(\widetilde\vV^{(k)})^{-\frac{1}{2}}\right]\vm^{(k-1)}-\nabla F(\vx^{(k)})^{\top}\alpha_k(\widetilde\vV^{(k)})^{-\frac{1}{2}}\vg^{(k)}.
\end{aligned}
\end{equation}
We bound the first term on the right-hand-side of \eqref{eq:term1split} by Definition~\ref{def:fgbound} and Lemma~\ref{lem:indbnd} as follows:
\begin{align}
&~\nabla F(\vx^{(k)})^{\top}\left[\alpha_{k-1}(\widetilde\vV^{(k-1)})^{-\frac{1}{2}}-\alpha_k(\widetilde\vV^{(k)})^{-\frac{1}{2}}\right]\vm^{(k-1)}\nonumber\\
= &~\sum_{i=1}^{n}  \nabla_i F(\vx^{(k)}) \left[ \alpha_{k-1}(\widetilde v^{(k-1)}_i)^{-\frac{1}{2}}-\alpha_k(\widetilde v^{(k)}_i)^{-\frac{1}{2}} \right]  m^{(k-1)}_i \nonumber\\
\leq &~\sum_{i=1}^{n}  \Phi_i \left| \alpha_{k-1}(\widetilde v^{(k-1)}_i)^{-\frac{1}{2}}-\alpha_k(\widetilde v^{(k)}_i)^{-\frac{1}{2}} \right|\Gamma_i\cr
=  &~ \sum_{i=1}^{n}  \Gamma_i\Phi_i \left[\alpha_{k-1}(\widetilde v^{(k-1)}_i)^{-\frac{1}{2}}-\alpha_k(\widetilde v^{(k)}_i)^{-\frac{1}{2}} \right],\label{eq:term1-1}
\end{align}
where the last equation follows because  $\alpha_{k-1}(\widetilde\vv^{(k-1)})^{-\frac{1}{2}} \geq \alpha_k(\widetilde\vv^{(k)})^{-\frac{1}{2}}> \vzero$ component-wisely.
Similarly, we can bound the second term on the right-hand-side of \eqref{eq:term1split} as follows:
\begin{align}
&-\nabla F(\vx^{(k)})^{\top}\alpha_k(\widetilde\vV^{(k)})^{-\frac{1}{2}}\vg^{(k)}\nonumber\\
&=-\nabla F(\vx^{(k)})^{\top}\alpha_{k-1}(\widetilde\vV^{(k-1)})^{-\frac{1}{2}}\vg^{(k)}+ \nabla F(\vx^{(k)})^{\top}\left[\alpha_{k-1}(\widetilde\vV^{(k-1)})^{-\frac{1}{2}}-\alpha_k(\widetilde\vV^{(k)})^{-\frac{1}{2}}\right]\vg^{(k)}\nonumber\\
&= 
-\nabla F(\vx^{(k)})^{\top}\alpha_{k-1}(\widetilde\vV^{(k-1)})^{-\frac{1}{2}}\vg^{(k)}+
\sum_{i=1}^{n}\nabla_i F(\vx^{(k)}) \left[\alpha_{k-1}(\widetilde v^{(k-1)}_i)^{-\frac{1}{2}}-\alpha_k(\widetilde v^{(k)}_i)^{-\frac{1}{2}}\right]   g^{(k)}_i\nonumber\\
&\leq -\nabla F(\vx^{(k)})^{\top}\alpha_{k-1}(\widetilde\vV^{(k-1)})^{-\frac{1}{2}}\vg^{(k)}+ \sum_{i=1}^{n}  \Phi_i \left| \alpha_{k-1}(\widetilde v^{(k-1)}_i)^{-\frac{1}{2}}-\alpha_k(\widetilde v^{(k)}_i)^{-\frac{1}{2}} \right|\Gamma_i \nonumber\\
&=-\nabla F(\vx^{(k)})^{\top}\alpha_{k-1}(\widetilde\vV^{(k-1)})^{-\frac{1}{2}}\vg^{(k)}+\sum_{i=1}^{n}  \Gamma_i\Phi_i \left[\alpha_{k-1}(\widetilde v^{(k-1)}_i)^{-\frac{1}{2}}-\alpha_k(\widetilde v^{(k)}_i)^{-\frac{1}{2}} \right].
\label{eq:term1-2}
\end{align}
Now substituting \eqref{eq:term1-1} and \eqref{eq:term1-2} into \eqref{eq:term1split} yields \eqref{eq:terms1}.

\vspace{0.2cm}
\noindent\textbf{Proof of Lemma \ref{lem:terms23}}~~~~From \eqref{eq:zznormbound} and the fact $(a+b)^2\leq 4a^2+ \frac{4}{3}b^2,\,\forall\, a, b\in\RR$, the inequality in \eqref{eq:terms3} immediately follows.
By the Cauchy-Schwarz inequality, and also \eqref{eq:zxlip} and\eqref{eq:zznormbound}, it holds
\begin{align*}
&\left(\nabla F(\vz^{(k)})-\nabla F(\vx^{(k)})\right)^{\top} (\vz^{(k+1)}-\vz^{(k)})  \nonumber\\
\leq &~ \|\nabla F(\vz^{(k)})-\nabla F(\vx^{(k)}) \| \cdot \|\vz^{(k+1)}-\vz^{(k)}\|  \nonumber\\
{\leq} &~ \frac{L\beta_1 }{1-\beta_1} \|\vx^{(k-1)}-\vx^{(k)}\| \left(\frac{\beta_1}{1-\beta_1} \|\vx^{(k-1)}-\vx^{(k)}\|+\|\alpha_k(\widetilde\vV^{(k)})^{-\frac{1}{2}}\vg^{(k)}\|\right)   \nonumber\\
= &~\frac{L\beta_1^2}{(1-\beta_1)^2} \|\vx^{(k-1)}-\vx^{(k)}\|^2 + \frac{\beta_1 L}{1-\beta_1} \|\vx^{(k-1)}-\vx^{(k)}\| \cdot \|\alpha_k(\widetilde\vV^{(k)})^{-\frac{1}{2}}\vg^{(k)}\|.
\end{align*}
Now using the Young's inequality, we have \eqref{eq:terms2} from the above inequality.

\end{document}